\newtheorem{theorem}{Theorem}[section]
\newtheorem{theorem A}{Theorem A}
\newtheorem{theorem B}{Theorem B}
\newtheorem{lemma}[theorem]{Lemma}
\newtheorem{definition}[theorem]{Definition}
\newtheorem{corollary}[theorem]{Corollary}
\newtheorem{proposition}[theorem]{Proposition}
\begin{document}
\authors
\title{The Bogomolov multiplier of Lie superalgebras}
\author[Z. Araghi Rostami]{Zeinab Araghi Rostami}
\author[P. Niroomand]{Peyman Niroomand}
\author[M. Parvizi]{Mohsen Parvizi}
\address{Department of Pure Mathematics\\
Ferdowsi University of Mashhad, Mashhad, Iran}
\email{araghirostami@gmail.com, zeinabaraghirostami@alumni.um.ac.ir}
\address{School of Mathematics and Computer Science\\
Damghan University, Damghan, Iran}
\email{niroomand@du.ac.ir, p$\_$niroomand@yahoo.com}
\address{Department of Pure Mathematics\\
Ferdowsi University of Mashhad, Mashhad, Iran}
\email{parvizi@um.ac.ir}

\address{Department of Mathematics, Ferdowsi University of Mashhad, Mashhad, Iran}
\keywords{Commutativity preserving exterior product, ${\tilde{B_0}}$-pairing, Curly exterior product, Bogomolov multiplier, Heisenberg Lie superalgebra.}
\thanks{\textit{Mathematics Subject Classification 2010.} 14E08, 17B01, 17B05, 19C09.}

\maketitle
\begin{abstract}
In this paper, we extend the notion of the Bogomolov multiplier and the commutativity preserving extension to Lie superalgebras. Moreover, we compute the Bogomolov multiplier of Heisenberg and real Lie superalgebras of dimension at most $4$. 
\end{abstract}

\section{Introduction and preliminaries}\label{sec1}
In the end of 19th century, Graded Lie algebra has become a topic of interest in physics in the field of '' supersymmetries '' particles related to various statistics. Kac in \cite{7}, introduced the theory of Lie superalgebras, which physicists call them $\mathbb{Z}_2$-graded Lie algebras. Later, similar to Lie's theory, this useful theory has been developed on the connection between Lie superalgebras and Lie supergroups. This theory has made many advances in recent years like many results obtained in representation theory and classification. It should also be said that most of these results are extensions of the well-known facts in Lie algebras. For more information about the Lie superalgebras, see  \cite{5,7,9} and the references given in them.
\\
In this paper, we develop the non abelian commutativity preserving exterior product and the Bogomolov multiplier for Lie superalgebras. This multiplier was first defined for groups by Fedro Bogomolov \cite{4} and it is a group theoretical invariant introduced as an obstruction to a problem in algebraic geometry which is called the rationality problem or Noether's problem. Recently, in \cite{1,2}, we defined this concept for Lie algebras and presented its connection with the Bogomolov multiplier of group by Lazard correspondence. We used a notion of the non abelian exterior square $L\wedge L$ of a finite dimensional Lie algebra $L$ over a field $\mathbb{F}$ to obtain a new description of the Bogomolov multiplier. Using Hopf's formula, we showed that if $0\to R \to F \to L\to 0$ be a free presentation of the finite dimensional Lie algebra over $\mathbb{F}$, then the Bogomolov multiplier is isomorphic to $\frac{R\cap [F,F]}{<K(F)\cap R>}$, where $K(F)=\{[x,y] \ \ | \ \ x,y\in F\}$. Now, It is interesting that the analogous theory of commutativity preserving exterior product can be developed to the theory of Lie (super) theory. The organization of the paper is as follows in sections $2$ and $3$, we introduce the non abelian commutativity preserving exterior product and Hopf's type formula for Lie superalgebras. Finally, in sections $4$ and $5$, we compute the Bogomolov multiplier of Heisenberg Lie superalgebras and real Lie superalgebras of dimension at most $4$.

Throughout this paper, all modules and algebras are defined over an unital commutative ring $\Bbb{K}$. Here, we give some terminology and notations on Lie superalgebras, that are given in \cite{5}.
Let $\Bbb{Z}_2 = \{0, 1\}$ be a field and we put $(-1)^{\bar{0}} = 1$ and $(-1)^{\bar{1}}= -1$.  A $\Bbb{Z}_2$-graded algebra (or superalgebra) $M$ is a direct sum of algebras $M_{\bar{0}}$ and $M_{\bar{1}}$ ($M=M_{\bar{0}} \oplus M_{\bar{1}}$), whose elements are called even and odd, respectively. Non-zero elements of $M_{\bar{0}} \cup M_{\bar{1}}$ are said to be homogeneous. For a homogeneous element $m\in M_{\bar{\alpha}}$ with $\alpha \in \Bbb{Z}_2$, $|m|=\bar{\alpha}$ is the degree of $m$. So whenever we have the notation $|m|$, $m$ will be a homogeneous element. A subalgebra $N$ of $M$ is called $\Bbb{Z}_2$-graded subalgebra (or sub superalgebra), if $N=N_{\bar{0}} \oplus N_{\bar{1}}$ where $N_{\bar{0}}=N\cap M_{\bar{0}}$ and $N_{\bar{1}}=N\cap M_{\bar{1}}$.
\begin{definition}\cite{5}\label{d2.1}
A Lie superalgebra is a superalgebra $M=M_{\bar{0}} \oplus M_{\bar{1}}$ with a multiplication denoted by $[,]$, called super bracket operation, satisfying the following identities:
\renewcommand {\labelenumi}{(\roman{enumi})}
\begin{enumerate}
\item{$[x,y]=-(-1)^{|x||y|}[y,x],$}
\item{$[x,[y,z]]=[[x,y],z]+(-1)^{|x||y|}[y,[x,z]],$}
\item{$[m_{\bar{0}},m_{\bar{0}}]=0$}
\end{enumerate}
for all homogeneous elements $x,y,z\in M$ and $m_{\bar{0}} \in M_{\bar{0}}$.
\end{definition}
Note that the last equation is easily derived from the first equation, in this case, $2$ is invertible in $\Bbb{K}$. The second equation is equivalent to the following graded Jacobi identity
$$(-1)^{|x||z|}[x,[y,z]]+(-1)^{|y||x|}[y,[z,x]]+(-1)^{|z||y|}[z,[x,y]]=0.$$
By using above identities, it can be seen that for a Lie superalgebra $M=M_{\bar{0}} \oplus M_{\bar{1}}$, the even part $M_{\bar{0}}$ is a Lie algebra and the odd part $M_{\bar{1}}$ is a $M_{\bar{0}}$-module. Hence if $M_{\bar{1}}=0$, then $M$ is a Lie algebra and if $M_{\bar{0}}=0$, then $M$ is an abelian Lie superalgebra (i.e. for all $x,y\in M$, $[x,y]=0$). But in general a Lie superalgebra is not a Lie algebra. The sub superalgebra of $L$ is a $\Bbb{Z}_2$-graded vector subspace which is closed under bracket operation. Take $[L, L]$, it is an graded subalgebra of $L$ and is denoted as $L^2$. A $\Bbb{Z}_2$-graded subspace $I$ is a graded ideal of $L$ if $[I, L] \subseteq I$ and for all $x \in L$ the ideal $Z(L) = \{z \in L ; [z, x] = 0\}$ is a graded ideal and it is called the center of $L$. If $I$ is an ideal of $L$, the quotient Lie superalgebra $L/I$ inherits a canonical Lie superalgebra structure such that the natural projection map becomes a homomorphism. The notions of epimorphisms, isomorphisms and auotomorphisms have the obvious meaning. According to the super dimension structure of Lie superalgebras over a field, we say that $L=L_{\bar{0}}\oplus L_{\bar{1}}$ is an $(m , n)$ Lie superalgebra, if $\dim L_{\bar{0}} = m$ and $\dim L_{\bar{1}} = n$. Also throughout $A(m|n)$ denotes an abelian Lie superalgebra with $\dim A = (m | n)$. The descending central sequence of a Lie superalgebra $L$ is defined by $L^{1}=L$ and $L^{c+1}=[L^c,L]$, for all $c\geq 1$. If for some positive integer $c$, $L^{c+1}=0$ and $L^c \neq 0$, then $L$ is called nilpotent with nilpotency class $c$. Also we have $|[m,n]|=|m|+|n|$. 
\begin{definition}\cite{5}\label{d2.2}
Let $M$ and $N$ be two Lie superalgebras. A bilinear map $f: M\to N$ is called a homomorphism of degree $|f|\in \Bbb{Z}_2$ (or Lie super homomorphism), if $f(M_{\bar{\alpha}})\subseteq N_{\bar{\alpha}+|f|}$ and $f([x,y])=[f(x),f(y)]$, for every $x,y\in M$.
\end{definition}
Especially if $|f|=\bar{0}$, then the homomorphism $f$ will be called of even linear map (or even grade).

\begin{definition}\label{d2}
Let $P$ be a Lie algebra and $M$ and $N$ be ideals of $P$. The  exterior product $M\wedge N$ is the Lie superalgebra generated by all symbols $m\wedge n$ subject to the following relations:
\renewcommand {\labelenumi}{(\roman{enumi})}
\begin{enumerate}
\item{$\lambda (m\wedge n) = \lambda m \wedge n = m\wedge \lambda n,$}
\item{$(m+m')\wedge n = m\curlywedge n + m'\wedge n,$ \\ where $m,m'$ have the same graded,}
\item{$m\wedge (n+n') = m\curlywedge n + m\wedge n',$ \\ where $n,n'$ have the same graded,}
\item{$[m,m']\wedge n = m\wedge [m',n]-(-1)^{|m||m'|} m'\wedge [m,n],$}
\item{$m\wedge [n,n'] = (-1)^{|n'|(|m|+|n|)}[n',m]\wedge n-(-1)^{|m||n|} [n,m]\wedge n',$}
\item{$[(m\wedge n),(m'\wedge n')] = -(-1)^{|m||n|}[n,m]\wedge [m',n'],$}
\end{enumerate}
for all $\lambda \in \Bbb{K}$, $m,m'\in M_{\bar{0}}\cup M_{\bar{1}}$ and $n,n'\in N_{\bar{0}}\cup N_{\bar{1}}$. 
\end{definition}
Note that if $M=M_{\bar{0}}$ and $N=N_{\bar{0}}$, then $M\wedge N$ can be considered as a non abelian exterior product of Lie algebras.
\\
\\
A more general structure $M\wedge N$ is given in \cite{5} for arbitrary crossed $P$-modules $M$ and $N$.

\begin{definition}\label{d3}
Let $P$ be a Lie superalgebra and $M$ and $N$ be ideals of $P$. A function $\rho: M\times N \to P$, is called a Lie super exterior pairing, if the following holds:
\renewcommand {\labelenumi}{(\roman{enumi})}
\begin{enumerate}
\item{$\rho({\lambda}m, n) = \rho(m, {\lambda}n) = {\lambda}\rho(m, n),$}
\item{$\rho(m + m', n) = \rho(m, n) + \rho(m', n),$ \\ where $m,m'$ have the same graded,}
\item{$\rho(m, n + n') = \rho(m, n) + \rho(m, n'),$ \\ where $n,n'$ have the same graded,}
\item{$\rho([m,m'],n)=\rho(m,[m',n])-(-1)^{|m||m'|}\rho(m',[m,n]),$}
\item{$\rho(m,[n,n'])=(-1)^{|n'|(|m|+|n|)}\rho([n',m],n)-(-1)^{|m||n|}\rho([n,m],n'),$}
\item{$[\rho(m,n),\rho(m',n')]=-(-1)^{|m||n|}\rho([n,m],[m',n']),$}
\end{enumerate}
for all $\lambda \in \Bbb{K}$, $m,m'\in M_{\bar{0}}\cup M_{\bar{1}}$ and $n,n'\in N_{\bar{0}}\cup N_{\bar{1}}$.
\end{definition}

\section{\bf{The non abelian commutativity preserving exterior product of Lie superalgebras}}\label{sec3}
In this section, we introduce a non abelian commutativity preserving exterior (CP exterior) product of Lie superalgebras, which generalizes the non abelian CP exterior product of Lie algebras in \cite{1}, and then we give some elementary results.

\begin{definition}\label{d3.1}
Let $P$ be a Lie superalgebra and $M$ and $N$ be ideals of $P$. A function $\rho: M\times N \to P$, is called a Lie super ${\tilde{B_0}}$-pairing, if the following holds.
\renewcommand {\labelenumi}{(\roman{enumi})}
\begin{enumerate}
\item{$\rho({\lambda}m, n) = \rho(m, {\lambda}n) = {\lambda}\rho(m, n),$}
\item{$\rho(m + m', n) = \rho(m, n) + \rho(m', n),$ \\ where $m,m'$ have the same graded,}
\item{$\rho(m, n + n') = \rho(m, n) + \rho(m, n'),$ \\ where $n,n'$ have the same graded,}
\item{$\rho([m,m'],n)=\rho(m,[m',n])-(-1)^{|m||m'|}\rho(m',[m,n]),$}
\item{$\rho(m,[n,n'])=(-1)^{|n'|(|m|+|n|)}\rho([n',m],n)-(-1)^{|m||n|}\rho([n,m],n'),$}
\item{$[\rho(m,n),\rho(m',n')]=-(-1)^{|m||n|}\rho([n,m],[m',n']),$}
\item{If $[m,n]+(-1)^{|m'||n'|}[m',n']=0$, then $\rho(m,n)+(-1)^{|m'||n'|}\rho(m',n')=0,$\\
If $[m_{\bar{0}},n_{\bar{0}}]=0$, then $\rho(m_{\bar{0}},n_{\bar{0}})=0$,}
\end{enumerate}
for all $\lambda \in \Bbb{K}$, $m,m'\in M_{\bar{0}}\cup M_{\bar{1}}$, $n,n'\in N_{\bar{0}}\cup N_{\bar{1}}$, $m_{\bar{0}}\in M_{\bar{0}}$ and $n_{\bar{0}}\in N_{\bar{0}}$.
\end{definition}

\begin{definition}\label{d3.2}
A Lie super ${\tilde{B_0}}$-pairing $\rho : M\times N \to L$ is called universal, if for any Lie super ${\tilde{B_0}}$-pairing $\rho' : M\times N \to L'$, there is a unique Lie homomorphism $\theta : L \to L'$ such that $\theta \rho=\rho'$.
\end{definition}
The following definition extends the concept of CP exterior product in \cite{1} to the theory of Lie superalgebras.
\begin{definition}\label{d3.4}
Let $L$ be a Lie algebra and $M$ and $N$ be two ideals of $L$. The CP exterior product $M\curlywedge N$ is the Lie superalgebra generated by all symbols $m\curlywedge n$ subject to the following relations
\renewcommand {\labelenumi}{(\roman{enumi})}
\begin{enumerate}
\item{$\lambda (m\curlywedge n) = \lambda m \curlywedge n = m\curlywedge \lambda n,$}
\item{$(m+m')\curlywedge n = m\curlywedge n + m'\curlywedge n,$ \\ where $m,m'$ have the same graded,}
\item{$m\curlywedge (n+n') = m\curlywedge n + m\curlywedge n',$ \\ where $n,n'$ have the same graded,}
\item{$[m,m']\curlywedge n = m\curlywedge [m',n]-(-1)^{|m||m'|} m'\curlywedge [m,n],$}
\item{$m\curlywedge [n,n'] = (-1)^{|n'|(|m|+|n|)}[n',m]\curlywedge n-(-1)^{|m||n|} [n,m]\curlywedge n',$}
\item{$[(m\curlywedge n),(m'\curlywedge n')] = -(-1)^{|m||n|}[n,m]\curlywedge [m',n'],$}
\item{If $[m,n]+(-1)^{|m'||n'|}[m',n']=0$, then $m\curlywedge n+(-1)^{|m'||n'|} m'\curlywedge n'=0,$\\
If $[m_{\bar{0}},n_{\bar{0}}]=0$, then $m_{\bar{0}} \curlywedge n_{\bar{0}}=0$,}
\end{enumerate}
for all $\lambda \in \Bbb{K}$, $m,m'\in M_{\bar{0}}\cup M_{\bar{1}}$, $n,n'\in N_{\bar{0}}\cup N_{\bar{1}}$, $m_{\bar{0}}\in M_{\bar{0}}$ and $n_{\bar{0}}\in N_{\bar{0}}$.
\end{definition}

In the case $M=N=L=L_{\bar{0}}\oplus L_{\bar{1}}$, we call $L\curlywedge L$ the curly exterior product of $L$ and for any $x,y\in L_{\bar{0}}\cup L_{\bar{1}}$ and $x_{\bar{0}}\in L_{\bar{0}}$, since $[x,y]+(-1)^{|x||y|}[y,x]=0$ and $[x_{\bar{0}},x_{\bar{0}}]=0$, we have
$$x\curlywedge y+(-1)^{|x||y|}y\curlywedge x=0 \ \ \ , \ \ \ x_{\bar{0}}\curlywedge x_{\bar{0}}=0.$$
\textbf{Remark.} Let $m=m_{\bar{0}}+m_{\bar{1}}$ and $n=n_{\bar{0}}+n_{\bar{1}}$ are arbitrary elements of $M$ and $N$ respectively, then according to the definition \ref{d3.4} we have
$$m\curlywedge n=m_{\bar{0}}\curlywedge n_{\bar{0}}+m_{\bar{0}}\curlywedge n_{\bar{1}}+m_{\bar{1}}\curlywedge n_{\bar{0}}+m_{\bar{1}}\curlywedge n_{\bar{1}}.$$
If $M=M_{\bar{0}}$ and $N=N_{\bar{0}}$ then the $M\curlywedge N$ is called the non abeian CP exterior product of Lie algebras which is introduced in \cite{1}.
\begin{lemma}\label{p3.5}
The function $\rho : M\times N \to M\curlywedge N$ given by $(m,n) \longmapsto m\curlywedge n$ is an universal  Lie super ${\tilde{B_0}}$-pairing.
\begin{proof}
The proof is straightforward.
\end{proof}
\end{lemma}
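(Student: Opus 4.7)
The plan is to proceed in two stages: first verify that $\rho$ is itself a Lie super $\tilde{B_0}$-pairing, and then establish the universal property by constructing the mediating homomorphism from the defining presentation of $M\curlywedge N$.

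For the first stage, I would simply match axioms. Definition \ref{d3.4} postulates relations (i)--(vii) among the symbols $m\curlywedge n$ in $M\curlywedge N$, and Definition \ref{d3.1} requires exactly the same seven identities on the pairing $\rho$. Thus setting $\rho(m,n):=m\curlywedge n$ turns each relation of the curly exterior product into the corresponding pairing axiom, so this stage is essentially a line-by-line translation with no content beyond observing that the two definitions were designed in parallel.

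For universality, given any Lie super $\tilde{B_0}$-pairing $\rho':M\times N\to L'$, I would define $\theta:M\curlywedge N\to L'$ on the generating set by
\[
\theta(m\curlywedge n)\;=\;\rho'(m,n),
\]
and extend $\Bbb K$-linearly. The key point is well-definedness: I must check that $\theta$ respects each of the defining relations (i)--(vii) of $M\curlywedge N$, and for each relation this is immediate because $\rho'$ satisfies the matching axiom of Definition \ref{d3.1}. Concretely, relations (i)--(iii) transport to bilinearity of $\rho'$; (iv) and (v) transport to the graded derivation identities; and (vii) transports to the $\tilde{B_0}$ vanishing conditions on commuting pairs. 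To see that $\theta$ is actually a Lie (super) homomorphism, I would use axiom (vi) of both definitions: for generators
\[
\theta\bigl([m\curlywedge n,\,m'\curlywedge n']\bigr)
= -(-1)^{|m||n|}\theta\bigl([n,m]\curlywedge[m',n']\bigr)
= -(-1)^{|m||n|}\rho'([n,m],[m',n'])
= [\rho'(m,n),\rho'(m',n')],
\]
so $\theta$ preserves brackets on generators, hence on all of $M\curlywedge N$ since brackets of generators generate brackets everywhere by sesquilinear extension.

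Uniqueness of $\theta$ is automatic: any Lie homomorphism $\theta'$ with $\theta'\rho=\rho'$ must send $m\curlywedge n\mapsto\rho'(m,n)$, and since the symbols $m\curlywedge n$ generate $M\curlywedge N$ as a $\Bbb K$-module closed under bracket, $\theta'$ agrees with $\theta$ everywhere. I expect no serious obstacle in the argument; the only step requiring a moment's care is verifying that the seventh axiom, which contains the two conditional vanishing clauses distinguishing $\tilde{B_0}$-pairings from ordinary super exterior pairings, is genuinely reflected by the parallel vanishing relations in Definition \ref{d3.4}. Once that correspondence is noted, well-definedness of $\theta$ on the quotient defining $M\curlywedge N$ follows directly from the universal property of presentations by generators and relations.
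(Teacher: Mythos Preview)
Your proposal is correct and is precisely the standard generators-and-relations argument that the paper has in mind when it writes ``The proof is straightforward.'' There is nothing to add: the two definitions are designed in parallel, and your verification of well-definedness, the homomorphism property via axiom (vi), and uniqueness on generators is exactly the expected elaboration.
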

\begin{theorem}\label{t3.6}
Let $L$ be a Lie superalgebra and $M$ and $N$ be two ideals of $L$. Then we have
$$M\curlywedge N\cong \frac{M\wedge N}{{\mathcal{M}_0}(M,N)},$$
where $\mathcal{M}_0(M,N)$ be the graded submodule of $M\wedge N$ generated by the elements
\renewcommand {\labelenumi}{(\roman{enumi})}
\begin{enumerate}
\item{$m\wedge n+(-1)^{|m'||n'|} m'\wedge n',$ where $[m,n]+(-1)^{|m'||n'|}[m',n']=0$,}
\item{$m_{\bar{0}}\wedge n_{\bar{0}},$ where $[m_{\bar{0}},n_{\bar{0}}]=0$,}
\end{enumerate}
with $m,m'\in M_{\bar{0}}\cup M_{\bar{1}}$, $n,n' \in N_{\bar{0}}\cup N_{\bar{1}}$, $m_{\bar{0}}\in M_{\bar{0}}$ and $n_{\bar{0}}\in N_{\bar{0}}$. 

\begin{proof}
By using Definition \ref{d3}, the function $\rho: M\times N \to M\curlywedge N$ given by $(m,n)\longmapsto (m\curlywedge n)$ is a Lie super exterior pairing. So it induces a Lie super homomorphism $\tilde{\rho}: M\wedge N \to M\curlywedge N$, given by $(m\wedge n) \longmapsto m\curlywedge n$, for all $m\in M$ and $n\in N$. Clearly $\mathcal{M}_0(M,N)\subseteq \ker{\tilde{\rho}}$, so we have the Lie super homomorphism ${\rho}^*: {(M\wedge N)}/{\mathcal{M}_0(M,N)}\to M\curlywedge N$ given by $(m\wedge n) + \mathcal{M}_0(M,N)\longmapsto (m\curlywedge n)$. On the other hand, the map ${l}^*: M\curlywedge N \to {(M\wedge N)}/{\mathcal{M}_0(M,N)}$ given by $(m\curlywedge n)\longmapsto (m\wedge n)+{\mathcal{M}_0(M,N)}$ is induced by the Lie super exterior pairing ${l}: {M}\times {N} \to {(M\wedge N)}/{{\mathcal{M}_0(M,N)}}$ given by $(m,n)\longmapsto (m\wedge n)+\mathcal{M}_0(M,N)$. Now it is easy to see that ${{\rho}^*}{l^*}={l^*}{{\rho}^*}=1$. Thus ${l}^*$ is an isomorphism.
\end{proof}
\end{theorem}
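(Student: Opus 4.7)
The plan is to construct mutually inverse Lie super homomorphisms between $M \curlywedge N$ and $(M \wedge N)/\mathcal{M}_0(M,N)$ by exploiting the universal properties attached to each side. First I would observe that the canonical map
\[
\rho : M \times N \longrightarrow M \curlywedge N, \qquad (m,n)\longmapsto m\curlywedge n,
\]
satisfies axioms (i)--(vi) of Definition \ref{d3.4}, which are exactly the conditions for a Lie super exterior pairing in the sense of Definition \ref{d3}. So by the defining universal property of $M\wedge N$ (Definition \ref{d2}), $\rho$ extends uniquely to a Lie super homomorphism $\tilde{\rho}:M\wedge N\to M\curlywedge N$ with $m\wedge n\mapsto m\curlywedge n$. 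Next I would check that the two families of generators of $\mathcal{M}_0(M,N)$ lie in $\ker\tilde{\rho}$: if $[m,n]+(-1)^{|m'||n'|}[m',n']=0$, then axiom (vii) of Definition \ref{d3.4} gives $m\curlywedge n+(-1)^{|m'||n'|}m'\curlywedge n'=0$ in $M\curlywedge N$, and similarly $m_{\bar 0}\curlywedge n_{\bar 0}=0$ whenever $[m_{\bar 0},n_{\bar 0}]=0$. Hence $\tilde{\rho}$ factors through the quotient, yielding a well-defined even homomorphism
\[
\rho^{*}:\frac{M\wedge N}{\mathcal{M}_0(M,N)}\longrightarrow M\curlywedge N,\qquad (m\wedge n)+\mathcal{M}_0(M,N)\longmapsto m\curlywedge n.
\]

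For the reverse arrow, I would consider
\[
l:M\times N\longrightarrow \frac{M\wedge N}{\mathcal{M}_0(M,N)},\qquad (m,n)\longmapsto (m\wedge n)+\mathcal{M}_0(M,N),
\]
and verify that it is a Lie super ${\tilde{B_0}}$-pairing in the sense of Definition \ref{d3.1}. Axioms (i)--(vi) are inherited directly from the corresponding relations in $M\wedge N$ after passing to the quotient; axiom (vii) holds precisely because the obstructing combinations are, by construction, the generators of $\mathcal{M}_0(M,N)$ and therefore vanish in the quotient. Applying Lemma \ref{p3.5}, which asserts the universality of the curly-exterior pairing, yields a unique Lie super homomorphism $l^{*}:M\curlywedge N\to (M\wedge N)/\mathcal{M}_0(M,N)$ sending $m\curlywedge n$ to $(m\wedge n)+\mathcal{M}_0(M,N)$.

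Finally, I would verify that $\rho^{*}\circ l^{*}$ and $l^{*}\circ \rho^{*}$ act as the identity on the respective generating sets $\{m\curlywedge n\}$ and $\{(m\wedge n)+\mathcal{M}_0(M,N)\}$; since both maps are Lie super homomorphisms and the generators determine them, this suffices. The main obstacle I anticipate is not the formal diagram-chase but rather ensuring that the bookkeeping of the $\mathbb{Z}_2$-grading survives each step: one must check that $\mathcal{M}_0(M,N)$ is a genuinely graded submodule so that the quotient inherits a Lie superalgebra structure, and that $l$ sends homogeneous pairs to homogeneous cosets of the correct degree so that the induced $l^{*}$ is an even homomorphism compatible with $\rho^{*}$. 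Once these parity verifications are in hand, the proof collapses to the standard universal-property argument sketched above.
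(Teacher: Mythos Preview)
Your proposal is correct and follows essentially the same route as the paper's proof: build $\rho^{*}$ from the exterior pairing $\rho$, build $l^{*}$ from the $\tilde{B_0}$-pairing $l$, and check they are mutual inverses on generators. If anything you are slightly more precise than the paper, which calls $l$ a ``Lie super exterior pairing'' when in fact the universal property of $M\curlywedge N$ (Lemma~\ref{p3.5}) requires $l$ to be a $\tilde{B_0}$-pairing, exactly as you wrote.
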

It is known that $\kappa: M\times N \to [M,N]$ given by $(m,n)\longmapsto [m,n]$ is a Lie super exterior pairing. So for all $m\in M$ and $n\in N$, it induces a Lie super homomorphism $\tilde{\kappa}: M\wedge N \to [M,N]$, such that $\tilde{\kappa}(m\wedge n)=[m,n]$. Moreover, the kernel of $\tilde{\kappa}$ is denoted by $\mathcal{M}(M,N)$. It can easily seen that $\mathcal{M}_0(M,N) \leq \mathcal{M}(M,N)$, thus there is a Lie super homomorphism ${\kappa}^*: M\wedge N / \mathcal{M}_0(M,N) \to [M,N]$ given by $m\wedge n + \mathcal{M}_0(M,N)\longmapsto [m,n]$, with $\ker {{\kappa}^*}\cong \mathcal{M}(M,N)/\mathcal{M}_0(M,N)$. Similar to the theory of Lie algebras, we denote $\mathcal{M}(M,N)/\mathcal{M}_0(M,N)$ by ${\tilde{B_0}}(M,N)$, and we call it the Bogomolov multiplier of the pair of Lie superalgebras $(M,N)$. Therefore, we have an exact sequence
$$0\to {\tilde{B_0}}(M,N) \to M\curlywedge N \to [M,N] \to 0.$$
In the case $M=N=L$, we denote $\mathcal{M}_0(L,L)$ by $\mathcal{M}_0(L)$.
\\
\\
Similar to Lie algebras, we denote $\mathcal{M}(L)/\mathcal{M}_0(L)$ by ${\tilde{B_0}}(L)$, and we call it the Bogomolov multiplier of the Lie algebra $L$. So we have an exact sequence
$$0\to {\tilde{B_0}}(L) \to L\curlywedge L \to L^2 \to 0.$$
\begin{proposition}\label{p3.7}
Let $L$ be a Lie superalgebra and $M$, $N$ and $K$ be ideals of $L$, such that $K\subseteq M\cap N$. Then there is an isomorphism
$${M}/{K}\curlywedge {N}/{K} \cong {(M\curlywedge N)}/{T},$$
where $T$ be the sub Lie superalgebra of $M\curlywedge N$ generated by the following elements:
\renewcommand {\labelenumi}{(\roman{enumi})}
\begin{enumerate}
\item{$m\curlywedge n+(-1)^{|m'||n'|} m'\curlywedge n',$ where $[m,n]+(-1)^{|m'||n'|}[m',n']\in K$,}
\item{$m_{\bar{0}}\curlywedge n_{\bar{0}},$ where $[m_{\bar{0}},n_{\bar{0}}]\in K$,}
\end{enumerate}
with $m,m'\in M_{\bar{0}}\cup M_{\bar{1}}$, $n,n' \in N_{\bar{0}}\cup N_{\bar{1}}$, $m_{\bar{0}}\in M_{\bar{0}}$ and $n_{\bar{0}}\in N_{\bar{0}}$. 

\begin{proof}
The function $\phi: M\times N\to {M}/{K}\curlywedge {N}/{K}$ given by $(m,n) \to (m+K)\curlywedge (n+K)$ is a well-defined Lie super $\tilde{B_0}$-pairing. Thus there is a Lie super homomorphism ${\phi}^*: M\curlywedge N\to {M}/{K}\curlywedge {N}/{K}$ with $m\curlywedge n\longmapsto (m+K)\curlywedge (n+K)$. Clearly $T\subseteq \ker{{\phi}^*}$, so we have the homomorphism $\psi: {(M\curlywedge N)}/{T}\to {M}/{K}\curlywedge {N}/{K}$ given by $m\curlywedge (n + T)\longmapsto (m+K)\curlywedge (n+K)$. On the other hand, the map ${\varphi}^*: {M}/{K}\curlywedge {N}/{K} \to {(M\curlywedge N)}/{T}$ given by $(m+K)\curlywedge (n+K)\longmapsto (m\curlywedge n)+T$ is induced by the Lie super $\tilde{B_0}$-pairing ${\varphi}: {M}/{K}\times {N}/{K} \to {(M\curlywedge N)}/{T}$ given by $(m+K,n+K)\longmapsto (m\curlywedge n)+T$. One can check that, ${{\varphi}^*}{\psi}={\psi}{{\varphi}^*}=1$. Thus, ${\varphi}^*$ is an isomorphism, and the proof is completed.
\end{proof}
\end{proposition}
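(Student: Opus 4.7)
The plan is to mirror the proof of Theorem \ref{t3.6} by constructing mutually inverse Lie super homomorphisms between $(M\curlywedge N)/T$ and $M/K \curlywedge N/K$, invoking the universal property of the curly exterior product (Lemma \ref{p3.5}) in both directions.

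For the forward map, I would define $\phi : M \times N \to M/K \curlywedge N/K$ by $(m,n) \mapsto (m+K) \curlywedge (n+K)$. Axioms (i)--(vi) of a Lie super $\tilde{B_0}$-pairing are exactly the defining relations of $\curlywedge$ read in the quotient, so they hold automatically. For axiom (vii), the vanishing hypothesis in $P$ holds a fortiori in $P/K$, and the corresponding relation inside $M/K \curlywedge N/K$ kills the images; the even clause is analogous. By Lemma \ref{p3.5}, $\phi$ factors through a Lie super homomorphism $\phi^* : M \curlywedge N \to M/K \curlywedge N/K$. To obtain the descent, I would verify $T \subseteq \ker \phi^*$: a generator of type (i) satisfies $[m,n] + (-1)^{|m'||n'|}[m',n'] \in K$, so the combined brackets of the corresponding cosets vanish in $M/K$, and relation (vii) of $\curlywedge$ in $M/K \curlywedge N/K$ sends the image to zero; generators of type (ii) are handled the same way. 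Hence $\phi^*$ factors through $\psi : (M\curlywedge N)/T \to M/K \curlywedge N/K$.

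For the reverse direction, I want $\varphi : M/K \times N/K \to (M \curlywedge N)/T$ defined by $(m+K, n+K) \mapsto (m\curlywedge n) + T$. The essential technical step is well-definedness: writing $\tilde m = m + k_1$, $\tilde n = n + k_2$ with $k_1, k_2 \in K$ and expanding in graded components, bilinearity of $\curlywedge$ gives
$$\tilde m \curlywedge \tilde n - m \curlywedge n = k_1 \curlywedge n + m \curlywedge k_2 + k_1 \curlywedge k_2,$$
and since $K$ is an ideal, each summand has its underlying bracket in $K$. So it suffices to show $a \curlywedge b \in T$ whenever $a, b$ are homogeneous with $[a,b] \in K$. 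Applying generator (i) of $T$ to the degenerate pair $((a,b),(a,b))$ gives $2(a\curlywedge b) \in T$, since the bracket side is $2[a,b] \in K$; invertibility of $2$ in $\Bbb K$ then yields $a \curlywedge b \in T$ (and the even--even case is directly generator (ii)). Once $\varphi$ is well-defined, checking the $\tilde{B_0}$-pairing axioms is routine: (i)--(vi) come from the corresponding relations in $M \curlywedge N$, and (vii) holds precisely because $T$ was designed to encode the required cancellations. Lemma \ref{p3.5} applied in $M/K \curlywedge N/K$ then yields $\varphi^* : M/K \curlywedge N/K \to (M \curlywedge N)/T$ sending $(m+K) \curlywedge (n+K) \mapsto (m \curlywedge n) + T$.

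Finally, $\psi$ and $\varphi^*$ act as identities on generators, so they are mutually inverse, completing the isomorphism. The main obstacle is the well-definedness of $\varphi$: one must transfer the ideal property of $K$, which lives at the bracket level, into a statement at the $\curlywedge$ level through the defining relations of $T$. The little doubling trick via generator (i) followed by invertibility of $2$ is what makes the argument uniform across grading parities; without it, the mixed-parity cases are not obviously covered by the listed generators.
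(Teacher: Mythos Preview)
Your approach is exactly the paper's: build $\psi$ and $\varphi^*$ via the universal $\tilde{B_0}$-pairing property and check they are mutually inverse on generators. The paper simply asserts that $\varphi$ is a well-defined $\tilde{B_0}$-pairing, so your extra care on well-definedness goes beyond what the paper writes.

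That said, your doubling trick has a small hole. With the degenerate pair $((a,b),(a,b))$, generator~(i) yields
\[
a\curlywedge b + (-1)^{|a||b|}\,a\curlywedge b \in T,
\]
which is $2(a\curlywedge b)$ only when $|a||b|=\bar 0$; if both $a$ and $b$ are odd, the sign is $-1$ and you obtain $0\in T$, which says nothing. So the case you actually miss is the \emph{odd--odd} one, not the mixed-parity one. An easy repair is to use the pair $((a,b),(-a,b))$ instead: then the bracket side is $[a,b]-(-1)^{|a||b|}[a,b]$, which equals $2[a,b]\in K$ precisely in the odd--odd case, and the generator gives $2(a\curlywedge b)\in T$ there as well. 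Alternatively, take $(m',n')=(0,0)$ with even degree in generator~(i): since $[a,b]\in K$ (as $K$ is an ideal) and $0\curlywedge 0=0$, this gives $a\curlywedge b\in T$ directly and avoids the appeal to invertibility of $2$, which the paper does not assume as a standing hypothesis.
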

Now, we give the behaviour of the CP exterior product respect to a direct sum of Lie superalgebras.
\begin{proposition}\label{p3.8}
Let $M$ and $N$ be two ideals of a Lie superalgebra $L$. Then
$$(M\oplus N)\curlywedge (M\oplus N)\cong M\curlywedge N\oplus N \curlywedge N.$$
\begin{proof}
By using Theorem 4.9 in \cite{13}, we have
\begin{align*}
(M\oplus N)\curlywedge (M\oplus N)&\cong (M\oplus N)\wedge (M\oplus N)+{M_0}(M\oplus N)\\ 
&=(M\wedge M)\oplus (N\wedge N)\oplus {M^{ab}}\otimes {N^{ab}}+{M_0}(M\oplus N),
\end{align*}
since 
$${M_0}(M\oplus N)={M_0}(M)\oplus {M_0}(N)\oplus {M^{ab}}\otimes {N^{ab}},$$
we have
$$(M\oplus N)\curlywedge (M\oplus N)\cong (M\curlywedge M)\oplus (N\curlywedge N).$$

\end{proof}
\end{proposition}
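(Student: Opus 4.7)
The plan is to reduce the curly (CP) exterior product on a direct sum to the ordinary non-abelian exterior product via Theorem \ref{t3.6}, invoke the known decomposition of $\wedge$ on a direct sum from the cited paper, and then identify the correct piece of $\mathcal{M}_0$ to quotient by. Notice first that the right-hand side of the statement should read $M\curlywedge M\oplus N\curlywedge N$ (as the proof sketch confirms); this is the isomorphism I will aim for.

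First, by Theorem \ref{t3.6} I rewrite
\[
(M\oplus N)\curlywedge (M\oplus N)\;\cong\;\frac{(M\oplus N)\wedge (M\oplus N)}{\mathcal{M}_0(M\oplus N)}.
\]
Next, I cite the decomposition of the non-abelian exterior product of a direct sum of ideals in a Lie superalgebra (Theorem~4.9 of \cite{13}):
\[
(M\oplus N)\wedge (M\oplus N)\;\cong\;(M\wedge M)\oplus(N\wedge N)\oplus\bigl(M^{ab}\otimes N^{ab}\bigr),
\]
where the cross summand $M^{ab}\otimes N^{ab}$ is generated by the mixed symbols $m\wedge n$ with $m\in M$, $n\in N$.

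The substantive step is to identify $\mathcal{M}_0(M\oplus N)$ inside this decomposition. I claim that, under the isomorphism above,
\[
\mathcal{M}_0(M\oplus N)\;\cong\;\mathcal{M}_0(M)\oplus\mathcal{M}_0(N)\oplus\bigl(M^{ab}\otimes N^{ab}\bigr).
\]
The first two summands are evident from the defining generators of $\mathcal{M}_0$ applied inside each ideal separately. The third summand comes from the fact that in the direct sum $M\oplus N$ one has $[m,n]=0$ for every homogeneous $m\in M$, $n\in N$: applying clause~(vii) of Definition~\ref{d3.1} with $m'=0$, $n'=0$ (both homogeneous) shows that the universal ${\tilde{B_0}}$-pairing sends every such mixed pair to $0$, equivalently every mixed symbol $m\wedge n$ lies in $\mathcal{M}_0(M\oplus N)$ via Theorem~\ref{t3.6}; in the even case clause~(vii) for $[m_{\bar 0},n_{\bar 0}]=0$ is used directly. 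Conversely, every generator of $\mathcal{M}_0(M\oplus N)$ is either a mixed term (absorbed into the cross summand) or decomposes, using bilinearity and the graded relations of Definition~\ref{d2}, into generators supported inside $M$ or inside $N$, giving the reverse inclusion.

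Finally, taking the quotient, the mixed summand $M^{ab}\otimes N^{ab}$ cancels exactly, and one obtains
\[
(M\oplus N)\curlywedge (M\oplus N)\;\cong\;\frac{M\wedge M}{\mathcal{M}_0(M)}\oplus\frac{N\wedge N}{\mathcal{M}_0(N)}\;\cong\;(M\curlywedge M)\oplus(N\curlywedge N),
\]
the last step being Theorem~\ref{t3.6} applied to $M$ and $N$ separately. I expect the main obstacle to be the verification of the equality $\mathcal{M}_0(M\oplus N)=\mathcal{M}_0(M)\oplus\mathcal{M}_0(N)\oplus(M^{ab}\otimes N^{ab})$: one must be careful that the generating relations of $\mathcal{M}_0$, when written for non-homogeneous mixed elements $m+n\in M\oplus N$, do not introduce additional relations that would shrink either of the diagonal pieces $M\wedge M$ or $N\wedge N$. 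Once one checks that each such mixed relation lies entirely in the cross summand (using bilinearity and that $[M,N]=0$), the rest is formal.
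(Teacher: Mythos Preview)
Your proposal is correct and follows essentially the same route as the paper: reduce to the ordinary exterior product via Theorem~\ref{t3.6}, apply the direct-sum decomposition of $\wedge$ from \cite{13}, identify $\mathcal{M}_0(M\oplus N)$ as $\mathcal{M}_0(M)\oplus\mathcal{M}_0(N)\oplus(M^{ab}\otimes N^{ab})$, and then pass to the quotient. In fact you supply more justification for the identification of $\mathcal{M}_0(M\oplus N)$ than the paper does (the paper simply asserts this equality), and you correctly flag that the right-hand side of the statement should read $M\curlywedge M\oplus N\curlywedge N$.
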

\section{Hopf's type formula for the Bogomolov multiplier of Lie superalgebras}\label{sec4}

Here, we recall from \cite{9,10} the following definitions.
\begin{definition}
The free Lie superalgebra on a $\Bbb{Z}_2$-graded set $X=X_{\bar{0}}\cup X_{\bar{1}}$ is a Lie superalgebra $F(X)$ together with a degree zero map $i:X \to F(X)$ such that  if $M$ is any Lie superalgebra and $j:X \to M$ is a degree zero map, then there is a unique Lie super homomorphism $h:F(X)\to M$ with $j=hoi$.
\end{definition}
The existence of free Lie superalgebra is guaranteed by an analog of Witt's theorem (see \cite{9}, Theorem 6.2.1).

\begin{definition}
Let $L$ be a Lie superalgebra generated by a $\Bbb{Z}_2$-graded set $X=X_{\bar{0}}\cup X_{\bar{1}}$ and $\phi: X\to P$ be a degree zero map, then there is a free Lie superalgebra $F$ and $\psi: F\to L$ expanding $\phi$. Let $R=\ker(\psi)$, the extension $0\to R\to F\to L\to 0$ is named a free presentation of $L$ and is denoted by $(F,\psi)$. 
\end{definition}

According to the well-known Hopf's type formula \cite{12}, we have an isomorphism $\mathcal{M}(P)\cong (R\cap F^2)/[R,F]$. Now we want to introduce the similar formula for the Bogomolov multiplier of a Lie superalgebra $L$.
\\
\\
Let $K(F)$ is used to denote $\{ [x,y] \ |\  x,y\in F \}$. Then

\begin{proposition}\label{p4.1}
Let $L$ be a Lie superalgebra with the free presentation $L\cong {F}/{R}$, then
$${\tilde{B_0}}(L)\cong\dfrac{R\cap F^2}{<K(F)\cap R>}.$$
\begin{proof}
From [\cite{5}, Corrollary 6.5], $L\wedge L \cong {F^2}/[R,F]$ and $L^2 \cong {F^2}/{(R\cap F^2)}$. Moreover $\ker \tilde{\kappa}=\ker(L\wedge L\to L^2)=\mathcal{M}(L)\cong (R\cap {F^2})/[R,F]$ and $\mathcal{M}_0(L)$ can be considered as the Lie sub superalgebra of ${F}/{[R,F]}$ generated by all commutators in ${F}/{[R,F]}$ that belong to $\mathcal{M}(L)$. Thus we have the following Lie super isomorphism for $\mathcal{M}_0(L)$,
$$\mathcal{M}_0(L)\cong <K(\dfrac{F}{[R,F]})\cap \dfrac{R}{[R,F]}> = \dfrac{<K(F)\cap R>+[R,F]}{[R,F]} =\dfrac{<K(F)\cap R>}{[R,F]}.$$
Therefore ${\tilde{B_0}}(L) ={\mathcal{M}(L)}/{\mathcal{M}_0(L)}\cong {R\cap F^2}/{<K(F)\cap R>}$, as required.
\end{proof}
\end{proposition}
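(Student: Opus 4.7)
The plan is to translate the definitions of $\mathcal{M}(L)$ and $\mathcal{M}_0(L)$ from the curly exterior product down to the free presentation $0 \to R \to F \to L \to 0$, then take the quotient. The skeleton is already supplied by the analogous Lie algebra proof in \cite{1}; what needs verifying is that the graded/superbracket bookkeeping does not disturb any step.

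First I would invoke the isomorphism $L \wedge L \cong F^2/[R,F]$ from \cite{5}, Corollary~6.5, together with the standard identification $L^2 \cong F^2/(R \cap F^2)$ coming from applying the third isomorphism theorem to the surjection $F \twoheadrightarrow L$. Under these identifications, the commutator map $\tilde{\kappa} \colon L \wedge L \to L^2$ becomes the obvious projection $F^2/[R,F] \twoheadrightarrow F^2/(R \cap F^2)$, whose kernel is $(R \cap F^2)/[R,F]$. This yields $\mathcal{M}(L) \cong (R \cap F^2)/[R,F]$, reproducing the ordinary Hopf formula in the super setting.

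Next I would locate $\mathcal{M}_0(L)$ inside $\mathcal{M}(L)$ under this identification. By the defining relations of $L \curlywedge L$ (Definition~\ref{d3.4}, clause (vii)) together with Theorem~\ref{t3.6}, $\mathcal{M}_0(L)$ is the graded sub superalgebra of $L \wedge L$ generated by the elements $m \wedge n$ for which $[m,n]=0$ in $L$ (after accounting for the sign-twisted pair and the even-even case). Pulling these back along $L \wedge L \cong F^2/[R,F]$, such an element corresponds to $[m',n']+[R,F]$ where $m',n'\in F$ lift $m,n$, and $[m,n]=0$ in $L$ translates precisely to $[m',n']\in R$. Therefore $\mathcal{M}_0(L)$ is identified with the sub superalgebra of $F^2/[R,F]$ generated by the images of the commutators $[x,y]\in K(F)\cap R$, which can be written as
\[
\mathcal{M}_0(L) \;\cong\; \frac{\langle K(F)\cap R\rangle + [R,F]}{[R,F]}.
\]

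The small point to check here, which is the only real obstacle, is that $[R,F] \subseteq \langle K(F)\cap R\rangle$: any bracket $[r,f]$ with $r\in R$, $f\in F$ already lies in $K(F)$ and, because $R$ is a graded ideal of $F$, also in $R$. Consequently the numerator above simplifies to $\langle K(F)\cap R\rangle$, giving $\mathcal{M}_0(L)\cong \langle K(F)\cap R\rangle/[R,F]$. Dividing the previous expression for $\mathcal{M}(L)$ by this and cancelling $[R,F]$ yields
\[
\tilde{B_0}(L) \;=\; \mathcal{M}(L)/\mathcal{M}_0(L) \;\cong\; \frac{R\cap F^2}{\langle K(F)\cap R\rangle},
\]
as desired. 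The superalgebra structure does not interfere because each of the objects $R$, $F^2$, $[R,F]$, $K(F)\cap R$ is $\mathbb{Z}_2$-graded and all maps involved are even homomorphisms, so the quotients inherit the grading automatically.
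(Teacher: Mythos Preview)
Your proposal is correct and follows essentially the same route as the paper's own proof: both invoke the isomorphism $L\wedge L\cong F^2/[R,F]$ from \cite{5}, identify $\mathcal{M}(L)$ with $(R\cap F^2)/[R,F]$ and $\mathcal{M}_0(L)$ with $\langle K(F)\cap R\rangle/[R,F]$, and then pass to the quotient. If anything, you are slightly more explicit than the paper in justifying the inclusion $[R,F]\subseteq\langle K(F)\cap R\rangle$ and in noting that all objects and maps respect the $\mathbb{Z}_2$-grading.
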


\begin{proposition}\label{p4.2}
Let $L$ be a Lie superalgebra with the free presentation $0 \xrightarrow{} R \xrightarrow{}  F \xrightarrow{\pi} L \xrightarrow{} 0$. If $T$ is an ideal in $F$ with $M=T/R$, then the following sequences are exact.
\renewcommand {\labelenumi}{(\roman{enumi})}
\begin{enumerate}
\item{${\tilde{B_0}}(L) \to {\tilde{B_0}}(\dfrac{L}{M})\to \dfrac{M}{<K(L)\cap M>} \to \dfrac{L}{L^2}\to \frac{\frac{L}{M}}{(\frac{L}{M})^2}\to 0$,}
\item{$0\to \dfrac{R\ \cap <K(F)\cap T>}{<K(F)\cap R>} \to {\tilde{B_0}}(L) \to {\tilde{B_0}}(\dfrac{L}{M})\to \dfrac{M\cap L^2}{<K(L)\cap M>} \to 0$.}
\end{enumerate}

\begin{proof}
\textbf{(i)}\ The inclusion maps $R\cap {F^2}\xrightarrow{f} T\cap {F^2},\ \ $ $T\cap {F^2}\xrightarrow{g} T,\ \ $ $T\xrightarrow{h} F$ and $F\xrightarrow{k} F$ induce the sequence of homomorphisms \\
$\displaystyle{\frac{R\cap {F^2}}{<K(F)\cap R>} \xrightarrow{f^*} \frac{T\cap {F^2}}{<K(F)\cap T>} \xrightarrow{g^*} \frac{T}{<K(F)\cap T>+R} \xrightarrow{h^*} \frac{F}{R+{F^2}} \xrightarrow{k^*}}$\\ $\displaystyle{\frac{F}{T+{F^2}} \to 0}$. Note that
$\displaystyle{\frac{T}{<K(F)\cap T>+R}\cong \frac{M}{<K(L)\cap M>}}$, $\displaystyle{\frac{F}{R+{F^2}}\cong \dfrac{L}{L^2}}$ and\\ $\displaystyle{\frac{F}{T+{F^2}}\cong \frac{{L}/{M}}{({L}/{M})^2}}$.
Now by using Proposition \ref{p4.1}, we have \\
$\displaystyle{{\tilde{B_0}}(L)\cong \frac{R\cap{F^2}}{<K(F)\cap R>}}$ and $\displaystyle{{\tilde{B_0}}(\dfrac{L}{M})\cong\frac{T\cap{F^2}}{<K(F)\cap T>}}$. Moreover,\\ $\displaystyle{\text{Im}{f^*}=\text{Ker}{g^*}=}$ $\displaystyle{\frac{R\cap{F^2}}{<K(F)\cap T>}},\ \ $ $\displaystyle{\text{Im}{g^*}=\text{Ker}{h^*}=\frac{T\cap{F^2}}{<K(F)\cap T>+R}}$, \\ $\displaystyle{\text{Im}{h^*}=\text{Ker}{k^*}=\frac{T}{R+{F^2}}}$, and $K^*$ is a Lie super epimorphism. Thus the above sequence is exact.
\\
\\
\textbf{(ii)}\ The inclusion maps $$R\ \cap <K(F)\ \cap \ T>\xrightarrow{f} R\ \cap {F^2},\ \ R\ \cap {F^2}\xrightarrow{g} T\cap {F^2}$$ and the map $T\cap {F^2}\xrightarrow{h} (T\cap {F^2})+R$ induce the sequence of Lie super homomorphisms \\
$\displaystyle{0\to \frac{R\  \cap <K(F)\cap T>}{<K(F)\cap R>}\xrightarrow{f^*}\frac{R\cap {F^2}}{<K(F)\cap R>} \xrightarrow{g^*} \frac{T\cap {F^2}}{<K(F)\cap T>} \xrightarrow{h^*}}$\\ $\displaystyle{\frac{(T\cap {F^2})+R}{<K(F)\cap T>+R} \to 0}$. It is straightforward to verify that \\
$\displaystyle{<K(L)\cap M>=\frac{<K(F)\cap T>+R}{R}}$ and $\displaystyle{M\cap {L^2}=\frac{T}{R}\cap \frac{F^2+R}{R}=\frac{(T\cap F^2)+R}{R}}$. \\
Therefore, we have
$$\displaystyle{\frac{M\cap {L^2}}{<K(L) \cap M>}=\frac{({(T\cap R^2)+R})/{R}}{({<K(F)\cap T>+R})/{R}}\cong \frac{(T\cap {F^2})+R}{<K(F) \cap T>+R}}.$$
Now by using Proposition \ref{p4.1}, we have\\
$\displaystyle{{\tilde{B_0}}(L)\cong \frac{R\cap{F^2}}{<K(F)\cap R>}},\ \ $ $\displaystyle{{\tilde{B_0}}(\dfrac{L}{M})\cong\frac{T\cap{F^2}}{<K(F)\cap T>}},\ \ $ and \\ $\displaystyle{\text{Im}{f^*}=\text{Ker}{g^*}=\frac{R\ \cap<K(F)\cap T>}{<K(F)\cap R>}},\ \ $ $\displaystyle{\text{Im}{g^*}=\text{Ker}{h^*}=\frac{R\cap{F^2}}{<K(F)\cap T>}}$. \\ Moreover, $h^*$ is a Lie super epimorphism. Thus the above sequence is exact.
\end{proof}
\end{proposition}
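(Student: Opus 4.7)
The overall strategy is to leverage the Hopf-type formula from Proposition~\ref{p4.1}, which identifies ${\tilde{B_0}}(L)$ with $(R \cap F^2)/\langle K(F) \cap R\rangle$. Since $L/M \cong F/T$ by the third isomorphism theorem, the same formula applied to the free presentation $0 \to T \to F \to F/T \to 0$ gives ${\tilde{B_0}}(L/M) \cong (T \cap F^2)/\langle K(F) \cap T\rangle$. The plan is then to construct both sequences inside the free Lie superalgebra $F$ by choosing natural inclusions among $R \cap F^2$, $T \cap F^2$, $T$, and $F$ (together with $R \cap \langle K(F) \cap T\rangle$ for part (ii)), pass to appropriate quotients to produce the desired induced maps, and verify exactness at each term.

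For part (i), I would use the chain $R \cap F^2 \hookrightarrow T \cap F^2 \hookrightarrow T \hookrightarrow F \xrightarrow{\mathrm{id}} F$ to induce the sequence
\begin{equation*}
\frac{R\cap F^2}{\langle K(F)\cap R\rangle} \xrightarrow{f^*} \frac{T\cap F^2}{\langle K(F)\cap T\rangle} \xrightarrow{g^*} \frac{T}{\langle K(F)\cap T\rangle + R} \xrightarrow{h^*} \frac{F}{R+F^2} \xrightarrow{k^*} \frac{F}{T+F^2} \to 0.
\end{equation*}
The first two terms become ${\tilde{B_0}}(L)$ and ${\tilde{B_0}}(L/M)$ by Proposition~\ref{p4.1}. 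The third term is identified with $M/\langle K(L)\cap M\rangle$ using $T/R = M$ and $(\langle K(F)\cap T\rangle + R)/R = \langle K(L)\cap M\rangle$, while the last two terms reduce to $L/L^2$ and $(L/M)/(L/M)^2$ respectively via standard isomorphism-theorem calculations. Exactness at each node would be checked by direct comparison of numerators and denominators.

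For part (ii), I would prefix the chain with $R \cap \langle K(F)\cap T\rangle \hookrightarrow R \cap F^2$ and truncate by following with the inclusion-induced map $T \cap F^2 \to (T\cap F^2)+R$, obtaining
\begin{equation*}
0 \to \frac{R \cap \langle K(F)\cap T\rangle}{\langle K(F)\cap R\rangle} \xrightarrow{f^*} \frac{R\cap F^2}{\langle K(F)\cap R\rangle} \xrightarrow{g^*} \frac{T\cap F^2}{\langle K(F)\cap T\rangle} \xrightarrow{h^*} \frac{(T\cap F^2)+R}{\langle K(F)\cap T\rangle + R} \to 0.
\end{equation*}
The terminal term is then identified with $(M\cap L^2)/\langle K(L)\cap M\rangle$ through the calculations $M \cap L^2 = ((T \cap F^2) + R)/R$ and $\langle K(L)\cap M\rangle = (\langle K(F)\cap T\rangle + R)/R$, after which the asserted four-term exact sequence drops out.

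The main obstacle will be verifying exactness at the inner terms, particularly at $(T \cap F^2)/\langle K(F) \cap T\rangle$ in each sequence. Showing $\ker(h^*) \subseteq \mathrm{Im}(g^*)$ requires lifting an element of $T \cap F^2$ whose residue in the next quotient vanishes back to an element coming from $R \cap F^2$, which amounts to a careful pullback computation inside the free superalgebra $F$. The well-definedness of each induced map on quotients must also be checked, using the evident inclusions $\langle K(F) \cap R\rangle \subseteq \langle K(F) \cap T\rangle$ and $R + F^2 \subseteq T + F^2$. Finally, one needs to confirm compatibility with the $\mathbb{Z}_2$-grading so that the maps are even Lie super homomorphisms, but this should be automatic since all subspaces in play are graded subspaces of $F$.
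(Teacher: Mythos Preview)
Your proposal is correct and follows essentially the same approach as the paper: both parts are proved by writing down the same chains of inclusions among $R\cap F^2$, $T\cap F^2$, $T$, $F$ (and $R\cap\langle K(F)\cap T\rangle$ for (ii)), passing to the indicated quotients, invoking Proposition~\ref{p4.1} to identify the first two terms with ${\tilde{B_0}}(L)$ and ${\tilde{B_0}}(L/M)$, and then checking images and kernels directly. The identifications you list for $M/\langle K(L)\cap M\rangle$, $L/L^2$, $(L/M)/(L/M)^2$, and $(M\cap L^2)/\langle K(L)\cap M\rangle$ are exactly those used in the paper.
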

We know for Lie algebras, the Schur multiplier is an universal object of central extensions. Recently, parallel to the classical theory of central extensions, we in \cite{1,2} developed a version of extensions that preserve commutativity and showed that the Bogomolov multiplier is the universal object parametrizing such extensions for a given Lie algebra. Here, we want to introduce a similar notion for Lie superalgebras.

\begin{definition}\label{d4.4}
Let $L$, $M$ and $C$ be Lie superalgebras. An exact sequence of Lie superalgebras $0 \xrightarrow{} M \xrightarrow{\chi}  C \xrightarrow{\pi} L \xrightarrow{} 0$ is called a commutativity preserving extension (CP extension) of $M$ by $L$, if commuting pairs of elements of $L$ have commuting lifts in $C$. A special type of CP extension with the central kernel is named a central CP extension.
\end{definition}
\begin{proposition}\label{p4.5}
Let $e : 0 \xrightarrow{} M \xrightarrow{\chi}  C \xrightarrow{\pi} L \xrightarrow{} 0$ be a central extension. Then $e$ is a CP extension if and only if $\chi (M) \cap K(C)=0$.
\begin{proof}
Suppose that $e$ is a CP central extension. Let $[c_1,c_2] \in \chi (M) \cap K(C)$, then there is a commuting lift $(c'_1,c'_2)\in C\times C$ of the commuting pair $(\pi(c_1),\pi(c_2))$, such that $\pi(c'_1)=\pi(c_1)$ and $\pi(c'_2)=\pi(c_2)$. So for some $a,b \in \chi (M)$, we have $c'_1={c_1}+a$ , $c'_2={c_2}+b$. Therefore $0=[c'_1,c'_2]=[{c_1}+a,{c_2}+b]=[c_1,c_2]$. Hence $\chi (M) \cap K(C)=0$. Conversely suppose that $\chi (M) \cap K(C)=0$. Choose $x,y\in P$ with $[x,y]=0$, we have $x=\pi(c_1)$ and $y=\pi(c_2)$ for some $c_1 , c_2 \in C$. Therefore $\pi ([c_1,c_2])=0$. Hence $[c_1,c_2] \in \chi (M) \cap K(C)=0$, so $[c_1,c_2]=0$. Thus the central extension $e$ is a CP extension.
\end{proof}
\end{proposition}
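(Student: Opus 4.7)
The plan is to prove the biconditional directly from the definition of a CP extension, leveraging the fact that because $\chi(M)$ sits inside the center of $C$, every super bracket with at least one entry in $\chi(M)$ vanishes regardless of parities.

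For the forward implication, I would pick an arbitrary element of $\chi(M)\cap K(C)$ and write it as a commutator $[c_1,c_2]$. Because this commutator lies in $\chi(M)=\ker\pi$, the pair $(\pi(c_1),\pi(c_2))$ commutes in $L$, so the CP hypothesis supplies a commuting lift $(c_1',c_2')\in C\times C$ with $\pi(c_i')=\pi(c_i)$. The differences $a_i:=c_i'-c_i$ then lie in $\chi(M)$, and expanding $0=[c_1',c_2']=[c_1+a_1,c_2+a_2]$ while using centrality of $\chi(M)$ to kill the three cross terms leaves $[c_1,c_2]=0$. Hence $\chi(M)\cap K(C)=0$.

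For the converse, I would start with a commuting pair $x,y\in L$, pick arbitrary lifts $c_1,c_2\in C$, and compute $\pi([c_1,c_2])=[x,y]=0$. This places $[c_1,c_2]$ in $\chi(M)$, and it is obviously in $K(C)$, so the hypothesis $\chi(M)\cap K(C)=0$ forces $[c_1,c_2]=0$. Thus the chosen lifts already commute, showing that $e$ is a CP extension.

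No serious obstacle arises. The only bookkeeping concern is that super brackets carry sign factors depending on parities; I would handle this by decomposing all elements into homogeneous components, observing that centrality of $\chi(M)$ in the $\mathbb{Z}_2$-graded sense annihilates each cross term irrespective of the accompanying sign. The proof is then essentially the classical argument for ordinary central extensions, adapted verbatim to the super setting.
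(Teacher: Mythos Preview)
Your proposal is correct and follows essentially the same argument as the paper: both directions are handled exactly as you describe, with the forward implication using a commuting lift and centrality to kill cross terms, and the converse observing that any lift of a commuting pair already commutes. The only addition in your write-up is the explicit remark about handling sign factors via homogeneous decomposition, which the paper leaves implicit.
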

\begin{definition}\label{d4.6}
An abelian ideal $M$ of a Lie superalgebra $L$ is called  a CP Lie sub superalgebra of $L$ if the extension $0\to M \to L \to \dfrac{L}{M} \to 0$ is a CP extension.
\end{definition}
Also, by using Proposition \ref{p4.5} an abelian ideal $M$ of a Lie superalgebra $L$ is a CP Lie sub superalgebra of $L$ if $M\cap K(L)=0$.

\begin{proposition}\label{p.4.4}
Let $L$ be a Lie superalgebra with the free presentation $0 \xrightarrow{} R \xrightarrow{}  F \xrightarrow{\pi} L \xrightarrow{} 0$. If $T$ is an ideal in $F$ with $N=T/R$, then the following are equivalent.
\begin{enumerate}
\item{$N$ is a CP sub Lie superalgebra of $L$},
\item{The canonical map $\psi: \mathcal{M}_0(L) \to \mathcal{M}_0(L/N)$ is surjective},
\item{The canonical map $\varphi: L\curlywedge L \to L/N \curlywedge L/N$ is an isomorphism}.
\end{enumerate}
\end{proposition}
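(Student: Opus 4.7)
The plan is to translate the three conditions into equalities of subspaces of the free Lie superalgebra $F$ by means of the Hopf-type identifications of Proposition~\ref{p4.1} together with the defining short exact sequence $0\to\mathcal{M}_0(L)\to L\wedge L\to L\curlywedge L\to 0$. Under the presentations $L\cong F/R$ and $L/N\cong F/T$, one gets $L\curlywedge L\cong F^{2}/\langle K(F)\cap R\rangle$, $L/N\curlywedge L/N\cong F^{2}/\langle K(F)\cap T\rangle$, and analogous quotients for $\mathcal{M}_0$. Under these identifications $\varphi$ is a natural, hence surjective, projection with kernel $\langle K(F)\cap T\rangle/\langle K(F)\cap R\rangle$, and $\psi$ is induced by the inclusion $\langle K(F)\cap R\rangle\hookrightarrow\langle K(F)\cap T\rangle$ modulo the respective commutator ideals $[R,F]$ and $[T,F]$.

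I would first prove \textbf{(1)}$\Leftrightarrow$\textbf{(3)} directly from relation~(vii) of Definition~\ref{d3.4}. Under (1), Proposition~\ref{p4.5} gives $N\cap K(L)=0$; any generator of $\ker\varphi$ has the form $\pi(a)\curlywedge\pi(b)$ with $[\pi(a),\pi(b)]\in N$, so that bracket vanishes, relation~(vii) forces $\pi(a)\curlywedge\pi(b)=0$, and consequently $\ker\varphi=0$. Conversely, if $\varphi$ is injective, then for any $x,y\in L$ with $[x,y]\in N$ the element $\bar x\curlywedge\bar y=0$ in $L/N\curlywedge L/N$ pulls back to $x\curlywedge y=0$ in $L\curlywedge L$, and composing with $L\curlywedge L\to L^{2}$ yields $[x,y]=0$; hence $N\cap K(L)=0$, which is (1). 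For \textbf{(3)}$\Rightarrow$\textbf{(2)} I would apply the snake lemma to the diagram
\begin{equation*}
\begin{array}{ccccccccc}
0 & \to & \mathcal{M}_0(L)   & \to & L\wedge L        & \to & L\curlywedge L      & \to & 0\\
  &     & \downarrow\psi     &     & \downarrow\alpha &     & \downarrow\varphi   &     &  \\
0 & \to & \mathcal{M}_0(L/N) & \to & L/N\wedge L/N    & \to & L/N\curlywedge L/N  & \to & 0
\end{array}
\end{equation*}
in which $\alpha$ is surjective (it is the quotient $F^{2}/[R,F]\twoheadrightarrow F^{2}/[T,F]$); the resulting six-term sequence exhibits $\mathrm{coker}(\psi)$ as a quotient of $\ker\varphi$, so $\ker\varphi=0$ forces $\psi$ surjective.

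For the remaining implication \textbf{(2)}$\Rightarrow$\textbf{(1)}, the reformulation rewrites (2) as $\langle K(F)\cap T\rangle=\langle K(F)\cap R\rangle+[T,F]$; I would then invoke that $N$ is an abelian ideal in the sense of Definition~\ref{d4.6} (so that one has the centrality-type consequence $[T,F]\subseteq R$) to place each generator $[t,f]$ of $[T,F]$ into $K(F)\cap R\subseteq\langle K(F)\cap R\rangle$, whereupon the equality in (2) collapses to $\langle K(F)\cap T\rangle=\langle K(F)\cap R\rangle$, i.e., (1). The main obstacle is precisely this last absorption step: the snake lemma alone shows only that $\ker\varphi$ is a quotient of $\ker\alpha=[T,F]/[R,F]$ when $\psi$ is surjective, and it is the extra structure provided by the CP framework (ensuring $[N,L]=0$ in $L$) that lets one absorb $[T,F]$ inside $\langle K(F)\cap R\rangle$ and close the loop.
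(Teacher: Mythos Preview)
Your route differs from the paper's. The paper handles both equivalences $(1)\Leftrightarrow(2)$ and $(1)\Leftrightarrow(3)$ by direct computation inside the free presentation: it writes $\psi$ as $\langle K(F)\cap R\rangle/[R,F]\to\langle K(F)\cap T\rangle/[T,F]$ and $\varphi$ as $F^{2}/\langle K(F)\cap R\rangle\to F^{2}/\langle K(F)\cap T\rangle$, and then observes that surjectivity of $\psi$, injectivity of $\varphi$, and the condition $K(F)\cap T\subseteq R$ (i.e.\ $N\cap K(L)=0$) all reduce to the single equality $\langle K(F)\cap T\rangle=\langle K(F)\cap R\rangle$. You instead argue $(1)\Leftrightarrow(3)$ intrinsically via relation~(vii) of Definition~\ref{d3.4}, and obtain $(3)\Rightarrow(2)$ from the snake lemma applied to the $\mathcal{M}_0\hookrightarrow(\,\cdot\wedge\cdot\,)\twoheadrightarrow(\,\cdot\curlywedge\cdot\,)$ diagram; the Hopf picture enters only for $(2)\Rightarrow(1)$. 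Your approach highlights the role of the commutativity-preserving relation and packages the homological content into a standard diagram chase; the paper's argument is shorter and stays entirely within the free presentation.

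One point in your $(2)\Rightarrow(1)$ step needs correction. You write that ``$N$ is an abelian ideal in the sense of Definition~\ref{d4.6} (so that one has the centrality-type consequence $[T,F]\subseteq R$)'', but abelian only yields $[T,T]\subseteq R$. What is actually needed is $N\subseteq Z(L)$, so that each generator $[t,f]$ of $[T,F]$ lies in $K(F)\cap R$; this is also what Proposition~\ref{p4.5} requires before one can translate ``CP sub'' into $N\cap K(L)=0$. The paper's own proof glosses over the very same step, passing from ``$\psi$ surjective'' directly to ``$\langle K(F)\cap T\rangle=\langle K(F)\cap R\rangle$'' without absorbing the $[T,F]$ summand. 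So your diagnosis of the obstacle is right and your resolution is the intended one; just attribute it to centrality of $N$ rather than to abelianness.
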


\begin{proof}
We know there is a Lie superalgebra homomorphism
$$\psi: \frac{<K(F)\cap R>}{[R,F]} \to \frac{<K(F)\cap T>}{[T,F]}$$
$$x+[R,F] \mapsto x+[T,F].$$
Now, as $N$ is CP sub Lie superalgebra of $L$, then by using the Proposition \ref{p4.5} and Definition \ref{d4.6}, $N \cap K(L)=0$. So, $T\cap K(F)\subseteq R$. Hence, $<K(F)\cap R>=<K(F)\cap T>$. Thus $\text{Im} \psi=<K(F)\cap T>/[T,F]$ and $\psi$ is surjective. On the other hand, if $\psi$ be surjective, then $<K(F)\cap T>=<K(F)\cap R>$. So, $K(F)\cap T \subseteq R$. Thus $N\cap K(L)=0$ and $N$ is CP Lie sub algebra of $L$. Therefore $(i)$ and $(ii)$ are equivalent. \\
Now, let $N$ be a CP sub Lie algebra of $L$, then $N\cap K(L)=0$. So, $T\cap K(F)\subseteq R$. By using Proposition \ref{p4.1}, $L\curlywedge L\cong {F^2}/<K(F)\cap R>$ and for all $x\in {F^2}$, we have
$$\varphi: \frac{F^2}{<K(F)\cap R>}\to \frac{F^2}{<K(F)\cap T>}$$
$$x+<K(F)\cap R>\mapsto x+<K(F)\cap T>.$$
Also, 
$$\text{Ker} \varphi=\{x+<K(F)\cap R> \ | \ x\in <K(F)\cap T> \}=\frac{<K(F)\cap T>}{<K(F)\cap R>}.$$
Since $T\cap K(F)\subseteq R$, then $T\cap K(F)>\subseteq <R\cap K(F)>$. So $\text{Ker} {\varphi}=0$ and $\varphi$ is injective.
Also, $\text{Im}{\varphi}={F^2}/<K(F)\cap S>$. Thus, $\varphi$ is surjective. Hence $\varphi$ is isomorphism. On the other hand, if $\varphi$ be an isomorphism, then $<K(F)\cap T>\subseteq <K(F)\cap R>$ and $K(F)\cap T\subseteq R$. Thus $N\cap K(L)=0$. Hence $N$ is a CP sub Lie superalgebra of $L$.
\end{proof}

Now we want to obtain an explicit formula for the Bogomolov multiplier of a direct product of two Lie superalgebras. The following lemma gives a free presentation for ${L_1}\oplus{L_2}$, in terms of the given free presentation for $L_1$ and $L_2$. It will be used in the rest.
\begin{lemma}[\cite{12}, Lemma 5.5]\label{l4.7}
Let $F_1$ and $F_2$ be two free Lie superalgebras freely generated by $X$ and $Y$, respectively and $F=F_1 \divideontimes F_2$ be the free product of $F_1$ and $F_2$. Then 
$0\to R\to F\xrightarrow{\delta}\to L_1\oplus L_2$ is a free presentation for $L_1\oplus L_2$ in which $R=R_1+R_2+[F_1,F_2]$.
\end{lemma}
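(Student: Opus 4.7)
The plan is to prove Lemma \ref{l4.7} by constructing the presentation map explicitly from the universal property of the free product and then pinning down its kernel by exhibiting a two-sided inverse modulo $R_1+R_2+[F_1,F_2]$.

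First I would set up $\delta : F = F_1 \divideontimes F_2 \to L_1 \oplus L_2$. Since $F$ is the coproduct of $F_1$ and $F_2$ in the category of Lie superalgebras, the pair of even degree maps $x\mapsto (\pi_1(x),0)$ for $x\in X$ and $y\mapsto (0,\pi_2(y))$ for $y\in Y$, where $\pi_i:F_i\to L_i$ is the presentation epimorphism, lifts uniquely to an even Lie super homomorphism $\delta:F\to L_1\oplus L_2$. The map $\delta$ is surjective because its image contains both $L_1\oplus 0$ (the image of $F_1$) and $0\oplus L_2$ (the image of $F_2$), which generate $L_1\oplus L_2$ as a Lie superalgebra. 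So $0\to R\to F\xrightarrow{\delta} L_1\oplus L_2\to 0$ is a free presentation with $R=\ker\delta$, and the question is to identify $R$.

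Next I would verify the easy inclusion $R_1+R_2+[F_1,F_2]\subseteq R$. For $r_i\in R_i$ we have $\delta(r_i)=0$ by construction, and for $[f_1,f_2]\in[F_1,F_2]$ we get $\delta([f_1,f_2])=[(\pi_1(f_1),0),(0,\pi_2(f_2))]=0$ in the direct sum. Along the way I would confirm that $[F_1,F_2]$ (the $\Bbb{K}$-span of brackets $[f_1,f_2]$ with $f_i\in F_i$) is already an ideal of $F$: the graded Jacobi identity gives $[g,[f_1,f_2]]=[[g,f_1],f_2]+(-1)^{|g||f_1|}[f_1,[g,f_2]]$, and checking this on generators $g\in X\cup Y$ keeps us inside $[F_1,F_2]$, so linearity and induction on bracket length finish the claim. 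Consequently $S:=R_1+R_2+[F_1,F_2]$ is an ideal of $F$ contained in $R$.

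For the reverse inclusion $R\subseteq S$ I would construct an inverse $\Phi:L_1\oplus L_2\to F/S$. Define $\Phi(l_1,l_2)=\overline{f_1}+\overline{f_2}$ where $f_i\in F_i$ is any lift of $l_i$ under $\pi_i$. Well-definedness follows because two lifts of $l_i$ differ by an element of $R_i\subseteq S$, and bilinearity is immediate. To see $\Phi$ is a Lie super homomorphism, compute
\[
[\Phi(l_1,l_2),\Phi(l_1',l_2')]=[\overline{f_1+f_2},\overline{f_1'+f_2'}]=\overline{[f_1,f_1']}+\overline{[f_2,f_2']}
\]
in $F/S$, since the cross brackets $[f_1,f_2']$ and $[f_2,f_1']$ lie in $[F_1,F_2]\subseteq S$; this exactly matches the bracket in $L_1\oplus L_2$ lifted through $\pi_1$ and $\pi_2$ in parallel. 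Then the composition $F/S\xrightarrow{\bar\delta}L_1\oplus L_2\xrightarrow{\Phi}F/S$ is the identity on the images of $X$ and $Y$, hence on all of $F/S$; combined with surjectivity of $\bar\delta$, this forces $\bar\delta$ to be an isomorphism, whence $R=S$.

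The main obstacle is proving that $[F_1,F_2]$ is already an ideal rather than merely a subspace—without this, one could only write $R$ as $R_1+R_2+\langle[F_1,F_2]\rangle$ with the ideal closure, and the statement would be weaker. The Jacobi reduction above is the crux; a cleaner alternative is to note that $F/[F_1,F_2]$ satisfies the universal property making $F_1$ and $F_2$ commute, hence is isomorphic to $F_1\oplus F_2$, and then $R$ modulo $[F_1,F_2]$ is visibly $R_1\oplus R_2$.
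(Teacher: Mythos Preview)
The paper does not give a proof of this lemma at all; it is simply cited from \cite{12}, so there is nothing to compare against directly. Your overall architecture---build $\delta$ from the coproduct universal property, check $S:=R_1+R_2+[F_1,F_2]\subseteq\ker\delta$, then produce a two-sided inverse $\Phi:L_1\oplus L_2\to F/S$---is the standard and correct route.

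There is, however, a genuine gap in the step you flag as the main obstacle. Your Jacobi argument that the \emph{span} $[F_1,F_2]$ is already an ideal of $F=F_1\divideontimes F_2$ is circular, and in fact the claim is false. For $g\in X\subseteq F_1$ you write
\[
[g,[f_1,f_2]]=[[g,f_1],f_2]+(-1)^{|g||f_1|}[f_1,[g,f_2]],
\]
and while $[[g,f_1],f_2]\in[F_1,F_2]$, the element $[g,f_2]$ lies in $[F_1,F_2]$ rather than in $F_2$, so $[f_1,[g,f_2]]$ is a bracket of an $F_1$-element with an element of $[F_1,F_2]$---precisely the kind of term you are trying to absorb. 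The induction never closes. Concretely, take $F_1$ free on a single even generator $x$ and $F_2$ free on a single even generator $y$; then $F_1=\Bbb{K}x$, $F_2=\Bbb{K}y$, the span $[F_1,F_2]$ is the one-dimensional space $\Bbb{K}[x,y]$, yet $[x,[x,y]]$ lies in $\ker\delta$ but not in that span.

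Your final paragraph already contains the fix: read $[F_1,F_2]$ in the lemma as the \emph{ideal} it generates (equivalently, the kernel of the canonical map $F_1\divideontimes F_2\to F_1\oplus F_2$). Then $F/[F_1,F_2]\cong F_1\oplus F_2$ by the universal property you state, and your map $\Phi$ is well-defined and yields the inverse to $\bar\delta$. With that reading the rest of your argument goes through unchanged; just drop the attempted proof that the span is an ideal and work with the ideal closure from the outset.
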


\begin{proposition}\label{p4.8}
Let $P_1$ and $P_2$ be Lie superalgebras. Then
$${\tilde{B_0}}(L_1\oplus L_2) \cong {\tilde{B_0}}(L_1) \oplus {\tilde{B_0}}(L_2).$$
\begin{proof}
$L_1\oplus L_2$ is a Lie superalgebra with even part $(L_1\oplus L_2)_{\bar{0}}={(L_1)}_{\bar{0}}+{(L_2)}_{\bar{0}}$ and odd part $(L_1\oplus L_2)_{\bar{1}}={(L_1)}_{\bar{1}}+{(L_2)}_{\bar{1}}$. Now by using Lemma \ref{l4.7}, we have
$${\tilde{B_0}}(L_1\oplus L_2) = \dfrac{(R_1+ R_2+[F_2,F_1])\cap (F_1\ast F_2)^2}{<K(F_1\ast F_2)\cap (R_1+ R_2+[F_1,F_2])>}.$$
\\
Let $F = F_1\ast F_2$, then the Lie super epimorphism $F\to F_1\times F_2$ induces the following Lie super epimorphism
$$\alpha : \dfrac{R\cap F^2}{<K(F)\cap R>}\to  \dfrac{R_1\cap {F_1}^2}{<K(F_1)\cap R_1>} \oplus \dfrac{R_2\cap {F_2}^2}{<K(F_2)\cap R_2>}$$
$$x+<K(F)\cap R>\longmapsto (x_1 + <K(F_1)\cap R_1>\ ,\ x_2 + <K(F_2)\cap R_2>)$$
where $x=x_1 + x_2$, $x_1\in R_1\cap {F_1}^2$ and $x_2\in R_2\cap {F_2}^2$.\\
On the other hand,
$$\beta : \dfrac{R_1\cap {F_1}^2}{<K(F_1)\cap R_1>} \oplus \dfrac{R_2\cap {F_2}^2}{<K(F_2)\cap R_2>} \to  \dfrac{R\cap F^2}{<K(F)\cap R>}$$
given by
$$ (x_1 + <K(F_1)\cap R_1>\ ,\ x_2 + <K(F_2)\cap R_2>) \longmapsto x+<K(F)\cap R>$$
is a well-defined Lie super homomorphism. It is easy to check that $\beta$ is a right inverse to $\alpha$, so $\alpha$ is a Lie super epimorphism. Now, we show that $\alpha$ is a Lie super monomorphism. Let $x+<K(F)\cap R>\ \in \ker \alpha$, such that, $x=t_1 + t_2$. So we have $t_1\in <K(F_1)\cap R_1>$ and $t_2\in <K(F_2)\cap R_2>$. Since $t_1 , t_2 \in <R\cap K(F)>$, then $x\in <K(F)\cap R>$. Thus $\alpha$ is a Lie super monomorphism.
\end{proof}
\end{proposition}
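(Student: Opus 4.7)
The plan is to combine Proposition \ref{p4.1} (the Hopf-type formula) with Lemma \ref{l4.7}, which supplies an explicit free presentation of the direct sum. Setting $F = F_1 \ast F_2$ and $R = R_1 + R_2 + [F_1, F_2]$, the Hopf-type formula gives
$${\tilde{B_0}}(L_1 \oplus L_2) \cong \dfrac{R \cap F^2}{\langle K(F) \cap R \rangle},$$
while the individual presentations yield ${\tilde{B_0}}(L_i) \cong (R_i \cap F_i^2)/\langle K(F_i) \cap R_i \rangle$ for $i=1,2$. The goal is then to construct mutually inverse Lie super homomorphisms between these two objects.

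First I would define a map $\alpha$ from left to right, induced by the canonical Lie super epimorphism $F_1 \ast F_2 \twoheadrightarrow F_1 \times F_2$ (which kills $[F_1, F_2]$). Under this projection each $x \in R \cap F^2$ is represented by a sum $x_1 + x_2$ with $x_i \in R_i \cap F_i^2$, and one sets
$$\alpha\bigl(x + \langle K(F) \cap R \rangle\bigr) = \bigl(x_1 + \langle K(F_1) \cap R_1 \rangle,\; x_2 + \langle K(F_2) \cap R_2 \rangle\bigr).$$
Well-definedness holds because any commutator in $K(F) \cap R$ projects componentwise into $K(F_i) \cap R_i$. Conversely, the inclusions $F_i \hookrightarrow F$ carry $\langle K(F_i) \cap R_i \rangle$ into $\langle K(F) \cap R \rangle$, giving a candidate right inverse $\beta$ sending a pair $(x_1 + \cdots,\, x_2 + \cdots)$ to $x_1 + x_2 + \langle K(F) \cap R \rangle$. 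A direct check shows $\alpha \beta = \mathrm{id}$, establishing surjectivity of $\alpha$.

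The core of the argument is injectivity of $\alpha$, and here the key observation is that $[F_1, F_2]$ is automatically contained in $\langle K(F) \cap R \rangle$: every generator $[f_1, f_2]$ is a commutator (so lies in $K(F)$) and simultaneously lies in $R = R_1 + R_2 + [F_1, F_2]$. Hence the natural decomposition $x = x_1 + x_2 + x_{12}$ with $x_i \in F_i^2$ and $x_{12} \in [F_1, F_2]$ places $x_{12}$ in the denominator, so modulo $\langle K(F) \cap R \rangle$ the element $x$ is represented by $x_1 + x_2$ with $x_i \in R_i \cap F_i^2$. If $\alpha$ sends this to zero, then each $x_i$ already lies in $\langle K(F_i) \cap R_i \rangle \subseteq \langle K(F) \cap R \rangle$, forcing $x \in \langle K(F) \cap R \rangle$. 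The main obstacle is the decomposition step: one must justify a normal-form / vector-space splitting $F_1 \ast F_2 = F_1 \oplus F_2 \oplus [F_1, F_2]$ that respects both the $\mathbb{Z}_2$-grading and the relation $R = R_1 + R_2 + [F_1, F_2]$, so that the assignments $x \mapsto (x_1, x_2)$ used in defining $\alpha$ and $\beta$ are compatible with the superalgebra structure. Once this is in hand, the proposition follows.
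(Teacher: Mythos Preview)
Your proposal is correct and follows essentially the same route as the paper: apply the Hopf-type formula to the free presentation from Lemma~\ref{l4.7}, build $\alpha$ from the projection $F_1\ast F_2\to F_1\times F_2$, produce an explicit right inverse $\beta$ via the inclusions $F_i\hookrightarrow F$, and then verify injectivity of $\alpha$. Your write-up is in fact more explicit than the paper's at the crucial step---your observation that $[F_1,F_2]\subseteq\langle K(F)\cap R\rangle$ is exactly what makes the decomposition $x\equiv x_1+x_2$ work, and the paper uses this implicitly without stating it.
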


Heisenberg Lie superalgebras obtain an isomorphic image of the canonical commutation relations in quantum mechanics. For this reason, these algebras are of interest to physicists. On the other hand, in geometry, just like the Heisenberg group, the study of nilmanifolds starts with the study of some of the simplest nilpotent Lie groups. 
\\
\\
In the next section, we are interesting to compute the Bogomolov multiplier for Heisenberg Lie superalgebras.
\section{Computing the Bogomolov multiplier of Heisenberg Lie superalgebras}\label{sec5}
Heisenberg Lie superalgebra is a Lie superalgebra $L=L_{\bar{0}}\oplus L_{\bar{1}}$ such that its first derived ideal is equal to its $1$-dimensional homogeneous center, i.e. $Z(L)=L^2$ and $\dim L^2=1$.
\\
\\
According to the  this definition, there are two cases.\\
(1). \ if the $1$-dimensional center is even, then $L_0$ is the well-known Heisenberg Lie algebra,  \\
(2). \ if the $1$-dimensional center is odd then $L_0$ is an abelian Lie algebra.

\begin{definition}\cite{14}\label{d5.1}
A special Heisenberg Lie superalgebra is a Heisenberg Lie superalgebra with even center.
\end{definition}

\begin{theorem}\cite{10}\label{t5.2}
Every special Heisenberg Lie superalgebra has dimension $(2m+1|n)$, and it is isomorphic to $H_{(m,n)}=H_{\bar{0}}\oplus H_{\bar{1}}$, where
$$H_{\bar{0}}=<x_1,x_2,...,x_m,x_{m+1},...,x_{2m},z \ \ | \ \ [x_i,x_{m+i}]=z \ \ ; \ \ i=1,...,m>$$
and
$$H_{\bar{1}}=<y_1,y_2,...,y_{n}\ \ | \ \ [y_j,y_{j}]=z \ \ ; \ \ j=1,...,n>.$$
\end{theorem}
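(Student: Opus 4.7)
The plan is to decompose $L$ using the bracket structure induced by $Z(L)=L^{2}$ and then reduce the problem to standard normal forms for bilinear forms. Let $Z(L)=\langle z\rangle$ with $z$ homogeneous of even degree, as we are in the special Heisenberg case.

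First I would observe that every bracket in $L$ lies in $L^{2}=\langle z\rangle\subseteq L_{\bar 0}$. Because the $\Bbb Z_{2}$-grading is respected, $[L_{\bar 0},L_{\bar 1}]\subseteq L_{\bar 1}$; combined with the previous line this forces $[L_{\bar 0},L_{\bar 1}]=0$. In particular the decomposition respects the bracket, so $L_{\bar 0}$ is a Heisenberg Lie algebra in the ordinary sense (with center $\langle z\rangle$) and $L_{\bar 1}$ is a module with values only in $\langle z\rangle$ under self-brackets.

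Next I would encode the bracket via two scalar-valued bilinear forms. Define
$B_{0}:L_{\bar 0}/\langle z\rangle\times L_{\bar 0}/\langle z\rangle\to\Bbb K$ and
$B_{1}:L_{\bar 1}\times L_{\bar 1}\to\Bbb K$ by $[u,v]=B_{i}(\bar u,\bar v)\,z$.
Super-antisymmetry yields that $B_{0}$ is antisymmetric while $B_{1}$ is symmetric (since $[y_{1},y_{2}]=-(-1)^{|y_{1}||y_{2}|}[y_{2},y_{1}]=[y_{2},y_{1}]$ on odd elements). Non-degeneracy of both forms is forced by $Z(L)=\langle z\rangle$: if $B_{0}(\bar u,\,\cdot\,)=0$ then $u$ commutes with $L_{\bar 0}$, and since $[L_{\bar 0},L_{\bar 1}]=0$ also with $L_{\bar 1}$, so $u\in Z(L)$; the argument for $B_{1}$ is identical.

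Now I would invoke standard classification results for bilinear forms. The non-degenerate alternating form $B_{0}$ forces $\dim L_{\bar 0}/\langle z\rangle=2m$ to be even and supplies a symplectic basis $\bar x_{1},\dots,\bar x_{m},\bar x_{m+1},\dots,\bar x_{2m}$ with $B_{0}(\bar x_{i},\bar x_{m+i})=1$ and all other pairings zero. Lifting to $L_{\bar 0}$ gives $\dim L_{\bar 0}=2m+1$ together with the defining relations $[x_{i},x_{m+i}]=z$. For the symmetric form $B_{1}$ one picks an orthonormal basis $y_{1},\dots,y_{n}$ of $L_{\bar 1}$; then $[y_{j},y_{j}]=z$ and $[y_{j},y_{k}]=0$ for $j\ne k$, so $\dim L_{\bar 1}=n$ and the odd relations of the statement hold. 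Assembling everything gives $\dim L=(2m+1\,|\,n)$ and an explicit isomorphism $L\cong H_{(m,n)}$.

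The main obstacle is the diagonalization step for $B_{1}$: a non-degenerate symmetric form admits an orthonormal basis only under assumptions on $\Bbb K$ (algebraically closed or every element a square, with characteristic $\ne 2$). Over a general field one only obtains a diagonal form with nonzero coefficients, and some rescaling convention is needed to reach the normalization $[y_{j},y_{j}]=z$. In the ambient setting of the paper this is not an issue, but it is the one place the proof genuinely depends on the ground field rather than on pure Lie-superalgebra combinatorics.
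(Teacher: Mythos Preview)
The paper does not prove this theorem; it quotes it verbatim from \cite{10} and uses it as an input to the subsequent computation of $\tilde{B_0}(H_{(m,n)})$. So there is no in-paper argument to compare against.

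Your argument is the standard one and is correct. The key structural step---that $[L_{\bar 0},L_{\bar 1}]=0$ because brackets land in $\langle z\rangle\subseteq L_{\bar 0}$ while the grading forces them into $L_{\bar 1}$---cleanly decouples the even and odd parts, and the reduction to a non-degenerate alternating form on $L_{\bar 0}/\langle z\rangle$ and a non-degenerate symmetric form on $L_{\bar 1}$ is exactly how the classification in \cite{10} proceeds. Your non-degeneracy check is right: any radical element would be central, hence in $\langle z\rangle$, and on the odd side $Z(L)\cap L_{\bar 1}=0$ since $z$ is even.

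Your caveat about the ground field is accurate and worth keeping: the normalization $[y_j,y_j]=z$ (rather than $[y_j,y_j]=\lambda_j z$ with $\lambda_j$ nonzero) requires that every nonzero scalar be a square, so the statement as written holds over an algebraically closed field of characteristic $\neq 2$, which is the implicit setting of \cite{10}. Over $\mathbb R$, for instance, one would get signs, as is visible in the paper's later list of nontrivial $(1,2)$-superalgebras $L^{1}_{(1,2)}$ and $L^{2}_{(1,2)}$.
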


\begin{theorem}\cite{10}\label{t5.3}
Every Heisenberg Lie superalgebra with odd center has dimension $(m|m+1)$, and it is isomorphic to $H_m=H_{\bar{0}}\oplus H_{\bar{1}}$, where
$$H_{m}=<x_1,x_2,...,x_{m}, y_1,y_2,...,y_{m}, z \ \ | \ \ [x_j,y_{j}]=z \ \ ; \ \ j=1,...,m>.$$

\end{theorem}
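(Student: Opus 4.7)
The plan is to exploit the hypothesis that the one-dimensional center is odd. Write $Z(L)=L^{2}=\langle z\rangle$ with $|z|=\bar{1}$, so that $L^{2}\subseteq L_{\bar{1}}$. Since $[L_{\bar{0}},L_{\bar{0}}]$ lies simultaneously in $L_{\bar{0}}$ (by the grading) and in $L^{2}\subseteq L_{\bar{1}}$, it must vanish; the same graded intersection argument shows $[L_{\bar{1}},L_{\bar{1}}]=0$. Hence the entire bracket on $L$ is encoded by the restriction $L_{\bar{0}}\times L_{\bar{1}}\to \langle z\rangle$, and the deviation of $L$ from being abelian is carried by a single scalar-valued pairing.

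Next I would set $m=\dim L_{\bar{0}}$, $n=\dim L_{\bar{1}}$, and introduce the bilinear form $b\colon L_{\bar{0}}\times L_{\bar{1}}\to \mathbb{K}$ defined by $[x,y]=b(x,y)\,z$. The condition $Z(L)=\langle z\rangle$ translates into two radical conditions on $b$: an element $x\in L_{\bar{0}}$ is central iff $b(x,-)\equiv 0$, and this should force $x=0$, so the left radical of $b$ is trivial; dually, $y\in L_{\bar{1}}$ is central iff $b(-,y)\equiv 0$, and the set of such $y$ is precisely the line $\langle z\rangle$, so the right radical has dimension exactly $1$. Equating the ranks of the induced maps $L_{\bar{0}}\to L_{\bar{1}}^{*}$ (injective, rank $m$) and $L_{\bar{1}}\to L_{\bar{0}}^{*}$ (kernel of dimension $1$, rank $n-1$) yields $m=n-1$, hence $\dim L=(m\,|\,m+1)$.

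To finish I would pass to the induced non-degenerate pairing $\bar{b}\colon L_{\bar{0}}\times (L_{\bar{1}}/\langle z\rangle)\to \mathbb{K}$ between two $m$-dimensional spaces. Choose dual bases $\{x_{1},\dots,x_{m}\}$ of $L_{\bar{0}}$ and $\{\bar{y}_{1},\dots,\bar{y}_{m}\}$ of $L_{\bar{1}}/\langle z\rangle$ with $\bar{b}(x_{i},\bar{y}_{j})=\delta_{ij}$, lift the latter to $y_{1},\dots,y_{m}\in L_{\bar{1}}$, and adjoin $z$. The basis $\{x_{i},y_{j},z\}$ then satisfies $[x_{i},y_{j}]=\delta_{ij}z$ with all other brackets zero, which is exactly the defining presentation of $H_{m}$, giving the required isomorphism.

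The step I expect to be the main obstacle is the asymmetric rank-nullity computation for $b$: one must carefully distinguish the left radical (forced to be trivial) from the right radical (the line spanned by $z$, which itself sits in $L_{\bar{1}}$) to obtain the dimension gap $n=m+1$ rather than $n=m$, and also to ensure no nonzero even element escapes detection by $b$. Once that asymmetry is in place, the reduction of $b$ to a standard dual-basis form is a straightforward piece of linear algebra over $\mathbb{K}$.
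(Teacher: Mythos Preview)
The paper itself does not prove this statement; Theorem~\ref{t5.3} is quoted from \cite{10} and stated without argument here, so there is no in-paper proof to compare against. Your proposal is correct and is the natural approach. The grading forces $[L_{\bar{0}},L_{\bar{0}}]$ and $[L_{\bar{1}},L_{\bar{1}}]$ to lie in $L_{\bar{0}}\cap L^{2}\subseteq L_{\bar{0}}\cap L_{\bar{1}}=0$, reducing the entire structure to the bilinear form $b\colon L_{\bar{0}}\times L_{\bar{1}}\to\Bbb{K}$. The center hypothesis $Z(L)=\langle z\rangle$ with $z$ odd translates exactly as you say: $Z(L)\cap L_{\bar{0}}=0$ gives trivial left radical, while $Z(L)\cap L_{\bar{1}}=\langle z\rangle$ gives a one-dimensional right radical, and equating the rank of $b$ computed from either side yields $m=n-1$. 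The dual-basis normalisation of the induced nondegenerate pairing on $L_{\bar{0}}\times(L_{\bar{1}}/\langle z\rangle)$ is then routine linear algebra over a field, which is the setting implicit in speaking of super dimension $(m\mid m+1)$. The asymmetry you flag as the main obstacle is handled correctly.
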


\begin{theorem}\label{t5.4}
Every Heisenberg Lie superalgebra with odd center has trivial Bogomolov multiplier.
\begin{proof}
By using Theorem \ref{t5.3}, we have
$$H_{m}=<x_1,x_2,...,x_{m}, y_1,y_2,...,y_{m}, z \ \ | \ \ [x_j,y_{j}]=z \ \ ; \ \ j=1,...,m>.$$
According to the Definition \ref{d2}, we have
$$z\wedge z=z\wedge [x_j,y_j]=(-1)^{|y_j|(|z|+|x_j|)}([y_j,z]\wedge x_j)=0\wedge x_j=0.$$
Thus
\begin{align*}
H_m\wedge H_m=&<x_i\wedge x_j  \ \ ; \ \ i,j=1,...,m \ , \ i\neq j >\\
&\cup <x_i\wedge y_j , x_i\wedge z , y_i\wedge y_j , y_i\wedge z \ \ ; \ \ i,j=1,...,m>.
\end{align*}
Now for all $w\in \mathcal{M}(H_m)\leq H_m\wedge H_m$, there exist ${\alpha}_{i,j}, {\beta}_{i,j}, {\beta}'_{j}, {\gamma}_{i}, {\eta}_{i,j}, {\delta}_{i}\in \Bbb{K} \  (i,j=1,...,m)$ such that

\begin{align*}
w&=\sum_{\substack{i,j=1 \\ j\neq i}}^{m}{{\alpha}_{i,j}}(x_i \wedge x_j)+\sum_{\substack{i,j=1 \\ j\neq i}}^{m}{{\beta}_{i,j}}(x_i \wedge y_j)+{\sum^{m}_{j=1}}{{\beta}'_{j}}(x_j \wedge y_j)\\
&+{\sum^{m}_{i=1}}{{\gamma}_{i}}(x_i \wedge z)+{\sum^{m}_{i,j=1}}{{\eta}_{i,j}}(y_i \wedge y_j)+{\sum^{m}_{i=1}}{{\delta}_{i}}(y_i \wedge z).
\end{align*}

Let $\tilde{\kappa}: H_m\wedge H_m \to [H_m,H_m]$ be given by $x\wedge y \mapsto [x,y]$. Since $\tilde{\kappa}(w)=0$, we have

\begin{align*}
w&=\sum_{\substack{i,j=1 \\ j\neq i}}^{m}{{\alpha}_{i,j}}[x_i,x_j]+\sum_{\substack{i,j=1 \\ j\neq i}}^{m}{{\beta}_{i,j}}[x_i,y_j]+{\sum^{m}_{j=1}}{{\beta}'_{j}}[x_j ,y_j]\\
&+{\sum^{m}_{i=1}}{{\gamma}_{i}}[x_i,z]+{\sum^{m}_{i,j=1}}{{\eta}_{i,j}}[y_i, y_j]+{\sum^{m}_{i=1}}{{\delta}_{i}}[y_i,z]=0.
\end{align*}

So ${\sum^{m}_{j=1}}{{\beta}'_{j}} z=0$. Hence, ${\sum^{m}_{j=1}}{{\beta}'_{j}}=0$ and ${\beta}'_m=-{\sum^{m-1}_{j=1}}{{\beta}'_{j}}$. Thus
\begin{align*}
w&=\sum_{\substack{i,j=1 \\ j\neq i}}^{m}{{\alpha}_{i,j}}(x_i \wedge x_j)+\sum_{\substack{i,j=1 \\ j\neq i}}^{m}{{\beta}_{i,j}}(x_i \wedge y_j)+{\sum^{m-1}_{j=1}}{{\beta}'_{j}}(x_j\wedge y_j-x_m\wedge y_m)\\
&+{\sum^{m}_{i=1}}{{\gamma}_{i}}(x_i \wedge z)+{\sum^{m}_{i,j=1}}{{\eta}_{i,j}}(y_i \wedge y_j)+{\sum^{m}_{i=1}}{{\delta}_{i}}(y_i \wedge z).
\end{align*}
On the other hand, we have
\[{x_i}\wedge z={x_i}\wedge z+(-1)^{|x_i||x_i|}(x_i\wedge x_i) \ \ ; \ \ [x_i,z]+(-1)^{|x_i||x_i|}[x_i,x_i]=0.\]
Therefore $x_i\wedge z\in \mathcal{M}_0(H_m)$.
Similarly $y_i\wedge z , y_i\wedge y_j$ and for $i\neq j$, $x_i\wedge y_j$, belong to $\mathcal{M}_0(H_m)$. 

Also we have
$$x_j\wedge y_j-x_m\wedge y_m=(-x_m)\wedge y_m+x_j\wedge y_j=(-x_m)\wedge y_m+(-1)^{|x_j||y_j|}x_j\wedge y_j,$$
and
$$[-x_m,y_m]+(-1)^{|x_j||y_j|}[x_j,y_j]=-[x_m,y_m]+[x_j,y_j]=-z+z=0.$$
So, for all $i,j=1...m-1$, $(x_i\wedge y_j-x_m\wedge y_m)\in \mathcal{M}_0(H_m)$. Thus $\mathcal{M}(H_m)\subseteq \mathcal{M}_0(H_m)$ and $\tilde{B_0}(H_m)=0$.
\end{proof}
\end{theorem}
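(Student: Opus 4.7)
The plan is to show that $\mathcal{M}(H_m) \subseteq \mathcal{M}_0(H_m)$, from which $\tilde{B_0}(H_m) = \mathcal{M}(H_m)/\mathcal{M}_0(H_m) = 0$ follows immediately. Since $H_m$ has only one non-vanishing bracket, $[x_j, y_j] = z$, with $z$ central and odd, the computation should be tractable by working directly inside the exterior square $H_m \wedge H_m$ and pushing the defining relations of Definition~\ref{d2} together with those generating $\mathcal{M}_0(H_m)$ listed in Theorem~\ref{t3.6}.

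First I would produce an explicit spanning set of $H_m \wedge H_m$. Centrality of $z$ together with the identification $z = [x_j, y_j]$ should kill $z \wedge z$ (expanding through relation (v) of Definition~\ref{d2}) and let every wedge involving $z$ be simplified. What remains is a $\Bbb{K}$-linear span of the pure wedges $x_i \wedge x_j$ for $i \neq j$, the mixed $x_i \wedge y_j$, the odd--odd $y_i \wedge y_j$, and the wedges $x_i \wedge z$ and $y_i \wedge z$.

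Second, for an arbitrary $w \in \mathcal{M}(H_m) = \ker \tilde{\kappa}$ I would expand $w$ in this spanning set and apply $\tilde{\kappa}$. Since every bracket vanishes except $[x_j, y_j] = z$, the equation $\tilde{\kappa}(w) = 0$ collapses to the single scalar condition $\sum_j \beta'_j = 0$ on the coefficients of the diagonal wedges $x_j \wedge y_j$; all other coefficients remain free. The core step is then to verify that every off-diagonal spanning element already lies in $\mathcal{M}_0(H_m)$. For a pair $(u, v)$ of even generators with $[u, v] = 0$ condition (ii) of Theorem~\ref{t3.6} applies directly; for a pair involving an odd generator with $[u, v] = 0$ I would pair $u \wedge v$ with some even diagonal $x_k \wedge x_k$, noting that $[u, v] + [x_k, x_k] = 0$ while $x_k \wedge x_k \in \mathcal{M}_0(H_m)$ by (ii), so condition (i) forces $u \wedge v \in \mathcal{M}_0(H_m)$ as well.

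Third, the constraint $\sum_j \beta'_j = 0$ allows the diagonal contribution to $w$ to be rewritten as a telescoping combination of differences $x_j \wedge y_j - x_m \wedge y_m$; each such difference satisfies $[x_j, y_j] + [-x_m, y_m] = z - z = 0$, so condition (i) of Theorem~\ref{t3.6} places it in $\mathcal{M}_0(H_m)$. Assembling these steps yields $w \in \mathcal{M}_0(H_m)$ for every $w \in \mathcal{M}(H_m)$, and the theorem follows. The main obstacle I anticipate is the careful bookkeeping of the $\Bbb{Z}_2$-signs $(-1)^{|m'||n'|}$ when invoking condition (i) of Theorem~\ref{t3.6}: for each odd-involving wedge one must find a partner whose parity and bracket align exactly, so an unavoidable case split by the parities of the generators is needed.
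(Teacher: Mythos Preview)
Your proposal is correct and follows essentially the same approach as the paper's proof: produce a spanning set for $H_m\wedge H_m$, reduce $\tilde\kappa(w)=0$ to the single constraint $\sum_j\beta'_j=0$, show each off-diagonal wedge lies in $\mathcal{M}_0(H_m)$ by pairing it with a vanishing even diagonal via condition~(i) or~(ii) of Theorem~\ref{t3.6}, and handle the diagonal part through the differences $x_j\wedge y_j-x_m\wedge y_m$. The only cosmetic difference is that the paper pairs, e.g., $x_i\wedge z$ with $x_i\wedge x_i$ rather than a generic $x_k\wedge x_k$, but the mechanism is identical.
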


\begin{theorem}\label{t5.5}
Every special Heisenberg Lie superalgebra has trivial Bogomolov multiplier.
\begin{proof}
By using Theorem \ref{t5.2}, $H_{(m,n)}=H_{\bar{0}}\oplus H_{\bar{1}}$ and

$$H_{\bar{0}}=<x_1,x_2,...,x_m,x_{m+1},...,x_{2m},z \ \ | \ \ [x_i,x_{m+i}]=z \ \ ; \ \ i=1,...,m>$$
and
$$H_{\bar{1}}=<y_1,y_2,...,y_{n}\ \ | \ \ [y_j,y_{j}]=z \ \ ; \ \ j=1,...,n>.$$

We can see that
\begin{align*}
H_{(m,n)}\wedge H_{(m,n)}&=<x_i\wedge x_j \ \ ;\ \ i=1,...,2m \ , \  j=1,...,n \ , \ i\neq j>\\
& \cup <x_i\wedge z , x_i\wedge y_j , y_i\wedge y_j   \ \ ; \ \ i=1,...,2m \ , \ j=1,...,n>.
\end{align*}
By using Definition \ref{d2}, we have
$$x_i\wedge z=x_i\wedge [y_j,y_j]=(-1)^{|y_j|(|x_i||y_j|)}([y_j,x_i]\wedge y_j)-(-1)^{|x_i||y_j|}([y_j,x_i]\wedge y_j)=0,$$
and $y_i\wedge z=y_i\wedge [x_i,x_m+i]=0$. So
\begin{align*}
H_{(m,n)}\wedge H_{(m,n)}&=<x_i\wedge x_j \ \ ; \ \ i=1,...,2m \  , \ j=1,...,n \ , \ i\neq j>\\
&\cup <x_i\wedge y_j , y_i\wedge y_j   \ \ ; \ \ i=1,...,2m \ , \  j=1...n>.
\end{align*}
Now for all $w\in \mathcal{M}(H_{(m,n)})\subseteq H_{(m,n)}\wedge H_{(m,n)}$, there exist ${\alpha}_{i,j}, {\beta}_{i,j} , {\alpha}'_{i} , {\gamma}_{j}\in \Bbb{K}$ with $(i=1,...,2m \ , \ j=1,...,n)$ such that

\begin{align*}
w&={\sum^{2m}_{i=1}}{{\alpha}'_{i}}(x_i \wedge x_{m+i})+{\sum^{2m}_{i=1}}{\sum_{\substack{j=1 \\ j\neq m+i}}^{n}{{\alpha}_{i,j}}(x_i\wedge x_j)}+{\sum^{2m}_{i=1}}{{\sum^{n}_{j=1}}{{\beta}_{i,j}}(x_i \wedge y_j)}\\
&+{\sum^{n}_{j=1}}{{\gamma}_{j}}(y_j \wedge y_j)+{\sum^{2m}_{i=1}}{\sum_{\substack{j=1 \\ i\neq j}}^{n}{{\gamma}'_{i,j}}(y_i\wedge y_j)}.
\end{align*}

Let $\tilde{\kappa}: H_{(m,n)}\wedge H_{(m,n)} \to [H_{(m,n)},H_{(m,n)}]$ be given by $x\wedge y \mapsto [x,y]$. Since $\tilde{\kappa}(w)=0$, we have

\begin{align*}
w&={\sum^{2m}_{i=1}}{{\alpha}'_{i}}[x_i , x_{m+i}]+{\sum^{2m}_{i=1}}{\sum_{\substack{j=1 \\ j\neq m+i}}^{n}{{\alpha}_{i,j}}[x_i,x_j]}+{\sum^{2m}_{i=1}}{{\sum^{n}_{j=1}}{{\beta}_{i,j}}[x_i , y_j]}\\
&+{\sum^{n}_{j=1}}{{\gamma}_{j}}[y_j , y_j]+{\sum^{2m}_{i=1}}{\sum_{\substack{j=1 \\ i\neq J}}^{n}{{\gamma}'_{i,j}}[y_i, y_j]}=0.
\end{align*}

So $({\sum_{i=1}^{2m}}{{\alpha}'_{i}}+ {\sum^{n}_{j=1}}{{\gamma}_{j}})z=0$. Hence, 
${\sum^{2m}_{{i=1}}}{{\alpha}'_{i}}+ {\sum^{n}_{{j=1}}}{{\gamma}_{j}}=0$ and 
${\gamma}_n=-{\sum^{2m}_{{i=1}}}{{\alpha}'_{i}}- {\sum^{n-1}_{{j=1}}}{{\gamma}_{j}}$.
\\
Thus
\begin{align*}
w&={\sum^{2m}_{i=1}}{{\alpha}'_{i}}(x_i \wedge x_{m+i})+{\sum^{2m}_{i=1}}{\sum_{\substack{j=1 \\ j\neq m+i}}^{n}{{\alpha}_{i,j}}(x_i\wedge x_j)}+{\sum^{2m}_{i=1}}{\sum_{\substack{j=1 \\ i\neq j}}^{n}{{\beta}_{i,j}}(x_i\wedge y_j)}\\
&+{\sum^{n}_{j=1}}{{\gamma}_{j}}(y_j \wedge y_j)-({\sum^{2m}_{i=1}}{{\alpha}'_{i}}+{\sum^{n-1}_{j=1}}{{\gamma}_{j}})(y_m\wedge y_m).
\end{align*}

Therefore 

\begin{align*}
w&={\sum^{2m}_{i=1}}{{\alpha}'_{i}}(x_i \wedge x_{m+i}-y_m\wedge y_m)+{\sum^{2m}_{i=1}}{\sum_{\substack{j=1 \\ j\neq m+i}}^{n}{{\alpha}_{i,j}}(x_i\wedge x_j)}\\
&+{\sum^{2m}_{i=1}}{{\sum^{n}_{j=1}}{{\beta}_{i,j}}(x_i \wedge y_j)}+{\sum^{n-1}_{j=1}}{{\gamma}_{j}}(y_j \wedge y_j-y_m\wedge y_m).
\end{align*}

According to the definition $\mathcal{M}_0(H_{(m,n)})$, for all $i=1,...,2m\ , \ j=1,...,n$ such that $j\neq m+i$, ${x_i}\wedge {x_j}\in \mathcal{M}_0(H_{(m,n)})$ and for all $i=1,...,2m\ , \ j=1,...,n$ where $i\neq j$, ${x_i}\wedge {y_j}\in \mathcal{M}_0(H_{(m,n)})$.
\\
Also since 
\[{x_i}\wedge x_{m+i}-y_m\wedge y_m ={x_i}\wedge x_{m+i}+(-1)^{|y_m||y_m|}(y_m\wedge y_m),\] 
and
$$[x_i,x_{m+i}]+(-1)^{|y_m||y_m|}[y_m,y_m]=z-z=0.$$
Then $x_i\wedge x_{m+i}-y_m\wedge y_m\in \mathcal{M}_0(H_{(m,n)})$. Similarly, we can see that $(y_j\wedge y_j - y_m\wedge y_m) \in \mathcal{M}_0(H_{(m,n)})$. 
Thus $\mathcal{M}(H_{(m,n)})\subseteq \mathcal{M}_0(H_{(m,n)})$ and $\tilde{B_0}(H_{(m,n)})=0$.
\end{proof}
\end{theorem}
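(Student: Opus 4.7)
The plan is to mirror the strategy used in Theorem \ref{t5.4}: first pin down a spanning set for $H_{(m,n)}\wedge H_{(m,n)}$ using the defining relations of the wedge product; then parameterize an arbitrary element $w\in\mathcal{M}(H_{(m,n)})$ and impose the condition $\tilde{\kappa}(w)=0$; and finally verify that every resulting element lies in $\mathcal{M}_0(H_{(m,n)})$, so that the quotient $\tilde{B_0}(H_{(m,n)})=\mathcal{M}(H_{(m,n)})/\mathcal{M}_0(H_{(m,n)})$ vanishes.

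The first step is to eliminate the central generator $z$ from the wedge. Since $z=[x_i,x_{m+i}]=[y_j,y_j]$, relations (iv) and (v) of Definition \ref{d2} let me rewrite $x_k\wedge z$ and $y_k\wedge z$ as combinations of wedges of the form $[y_j,x_k]\wedge y_j$ or $[x_i,y_k]\wedge x_{m+i}$, all of which vanish because the mixed brackets $[x_\bullet,y_\bullet]$ are zero in $H_{(m,n)}$. After this reduction, a spanning set for $H_{(m,n)}\wedge H_{(m,n)}$ consists of the symbols $x_i\wedge x_j$, $x_i\wedge y_j$, and $y_i\wedge y_j$.

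Next I write an arbitrary $w\in\mathcal{M}(H_{(m,n)})$ as a linear combination of these generators with scalars $\alpha'_i,\alpha_{i,j},\beta_{i,j},\gamma_j,\gamma'_{i,j}$, apply $\tilde{\kappa}$, and use the fact that the only non-zero brackets are $[x_i,x_{m+i}]=z$ and $[y_j,y_j]=z$. The vanishing $\tilde{\kappa}(w)=0$ then collapses to the single scalar relation $\sum_{i=1}^{m}\alpha'_i+\sum_{j=1}^{n}\gamma_j=0$, which I use to eliminate $\gamma_n$. Every remaining pure-wedge summand (for example $x_i\wedge x_j$ with $j\ne m+i$, or $x_i\wedge y_j$, or $y_i\wedge y_j$ with $i\ne j$) has zero bracket and so lies in $\mathcal{M}_0(H_{(m,n)})$ by clause (ii) of the defining generators.

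The crux is then to show that the leftover ``non-zero bracket'' differences $x_i\wedge x_{m+i}-y_n\wedge y_n$ and $y_j\wedge y_j-y_n\wedge y_n$ also lie in $\mathcal{M}_0(H_{(m,n)})$. For this I invoke clause (i) of the generating set of $\mathcal{M}_0$ together with the sign $(-1)^{|y_n||y_n|}=-1$: the identities $[x_i,x_{m+i}]+(-1)^{|y_n||y_n|}[y_n,y_n]=z-z=0$ and $[y_j,y_j]+(-1)^{|y_n||y_n|}[y_n,y_n]=z-z=0$ place these differences directly in $\mathcal{M}_0$. The main obstacle I anticipate is precisely this sign bookkeeping: it is the oddness of $y_n$ that turns the apparent sum $z+z$ into the needed cancellation $z-z$, and one must be careful to apply clause (i) with the correct parity signs rather than naively mimicking the even case. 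Once that is handled, $w\in\mathcal{M}_0(H_{(m,n)})$ follows and the theorem is proved.
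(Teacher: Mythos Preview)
Your outline is essentially the paper's own argument: eliminate wedges involving $z$ via the relations in Definition~\ref{d2}, expand a general $w\in\mathcal{M}(H_{(m,n)})$ in the remaining generators, extract the single linear constraint $\sum_i\alpha'_i+\sum_j\gamma_j=0$ from $\tilde\kappa(w)=0$, and then recognise each surviving summand---including the differences $x_i\wedge x_{m+i}-y_n\wedge y_n$ and $y_j\wedge y_j-y_n\wedge y_n$---as a generator of $\mathcal{M}_0$ via the odd-sign trick $(-1)^{|y_n||y_n|}=-1$. One small slip: clause~(ii) of the generators of $\mathcal{M}_0$ applies only to pairs of \emph{even} elements, so it covers $x_i\wedge x_j$ but not $x_i\wedge y_j$ or $y_i\wedge y_j$; for those you must use clause~(i), pairing the given wedge with an auxiliary even term such as $x_1\wedge x_1$ (exactly as the paper does implicitly and as in the proof of Theorem~\ref{t5.4}).
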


\begin{theorem}\label{t5.6}\cite{11}
Let $L$ be a nilpotent Lie superalgebra of dimension $(k|l)$ with $\dim L^2=(r|s)$, where $r+s=1$. Then we have 
\renewcommand {\labelenumi}{(\roman{enumi})}
\begin{enumerate}
\item{if $r=1$ , $s=0$ then $\tilde{B_0}(L)=0$,}
\item{if $r=0$ , $s=1$ then $\tilde{B_0}(L)=0$.}
\end{enumerate}
\begin{proof}
By Proposition $3.4$ in \cite{11}, if $r=1$ and $s=0$ then 
$$L\cong H_{(m,n)}\oplus A(k-2m-1\ |\ l-n) \ \  ;  \ \ m+n\geq 1.$$
Now using Proposition \ref{p4.8} and Theorem \ref{t5.5}, we have
\begin{align*}
\tilde{B_0}(L)&\cong \tilde{B_0}(H_{(m,n)}\oplus A(k-2m-1\ | \ l-n))\\
&\cong \tilde{B_0}(H_{(m,n)})\oplus \tilde{B_0}(A(k-2m-1 \ | \ l-n))=0.
\end{align*}
Since $\tilde{B_0}(H_{(m,n)})=\tilde{B_0}(A(k-2m-1 \ | \ l-n))=0$, the result follows. Also for $r=0$ and $s=1$, by using Proposition $3.4$ in \cite{11}, $L\cong H_m \oplus A(k-m \ | \ l-m-1)$. Similarly by using Proposition \ref{p4.8} and Theorem \ref{t5.5}, we have
\begin{align*}
\tilde{B_0}(L)&\cong \tilde{B_0}(H_{m}\oplus A(k-m\ | \ l-m-1))\\
&\cong \tilde{B_0}(H_{m})\oplus \tilde{B_0}(A(k-m \ | \ l-m-1))=0, 
\end{align*}
as required.
\end{proof}
\end{theorem}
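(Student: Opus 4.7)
The plan is to reduce both cases of the theorem to the Heisenberg computations already carried out in Theorems \ref{t5.4} and \ref{t5.5}, combined with the direct-sum additivity of the Bogomolov multiplier from Proposition \ref{p4.8}. This is essentially a corollary: the hard work has been done in the preceding two theorems, and what remains is packaging.

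First, I would invoke the structural classification of Proposition 3.4 of \cite{11}, which says that any nilpotent Lie superalgebra $L$ of dimension $(k|l)$ with $\dim L^{2}=(r|s)$ and $r+s=1$ splits as a direct sum of a Heisenberg Lie superalgebra and an abelian Lie superalgebra: in case (i) as $L\cong H_{(m,n)}\oplus A(k-2m-1\mid l-n)$, and in case (ii) as $L\cong H_{m}\oplus A(k-m\mid l-m-1)$. This is precisely the hypothesis on which the whole argument rests, so I would quote it verbatim.

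Second, I would apply Proposition \ref{p4.8} to each decomposition, obtaining
\[
\tilde{B_0}(L)\cong \tilde{B_0}(H_{(m,n)})\oplus \tilde{B_0}(A(k-2m-1\mid l-n))
\]
in case (i), and the analogous decomposition in case (ii). Theorem \ref{t5.5} handles the special Heisenberg summand in case (i), while Theorem \ref{t5.4} handles the odd-center Heisenberg summand in case (ii); both are trivial. For the abelian summand $A$, triviality of $\tilde{B_0}(A)$ follows directly from the definition of $\mathcal{M}_0$: in an abelian Lie superalgebra every bracket vanishes, so every generator $a\wedge b$ of $A\wedge A$ already lies in $\mathcal{M}_0(A)$ (either via the clause $m_{\bar 0}\wedge n_{\bar 0}=0$ when $[m_{\bar 0},n_{\bar 0}]=0$, or via the clause $m\wedge n+(-1)^{|m'||n'|}m'\wedge n'=0$ applied with $m'=n'=0$). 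Hence $\mathcal{M}(A)=\mathcal{M}_0(A)$ and $\tilde{B_0}(A)=0$.

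Combining these vanishings through the direct sum yields $\tilde{B_0}(L)=0$ in both cases. There is no genuine obstacle here; the only point requiring even a moment of thought is confirming that the abelian factor contributes nothing, which is immediate from the defining relations of $\mathcal{M}_0$. All other ingredients—the classification, additivity under direct sums, and triviality for the two Heisenberg families—have been established earlier in the paper.
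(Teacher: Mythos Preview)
Your proposal is correct and follows essentially the same route as the paper: invoke the classification from \cite{11}, apply the direct-sum formula of Proposition \ref{p4.8}, and conclude from the Heisenberg computations together with the triviality of $\tilde{B_0}$ for abelian Lie superalgebras. If anything, you are slightly more careful than the paper, since you correctly cite Theorem \ref{t5.4} for the odd-center case (ii) and spell out why the abelian summand contributes nothing.
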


One of the important results on the Schur multiplier of Lie superalgebras was presented by Narayan and et. al in \cite{13}. They showed that for a nilpotent Lie superalgebra $L$ of super dimension $(m|n)$, $\dim \mathcal{M}(L) = \frac{1}{2}[(m+n)^2+n-m]-t(L)$, for some $t(L)\geq 0$ that is called \textbf{corank}. Their results suggest an interesting question, ''can we classify Lie superalgebras of super dimension $(m|n)$ by corank?'' The answer to this question can be found for $t(L)=0,\ldots, 4$ in \cite{11}. Now, according to this classification, we will investigate the Bogomolov multiplier for some Lie superalgebras.

\begin{theorem}\label{t5.7}
Let $L$ be a $(m|n)$-super dimensional nilpotent Lie superalgebra and $t(L)\leq 3$. Then ${\tilde{B_0}}(L)=0$.
\begin{proof}
By Theorem $6.11$ in \cite{13}, Theorem $6.1$ in \cite{1}, Theorem \ref{t5.5} and Proposition \ref{p4.8}, ${\tilde{B_0}}(L)=0$.
\end{proof}
\end{theorem}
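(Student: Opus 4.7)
The plan is to leverage the classification of low-corank nilpotent Lie superalgebras from \cite{11,13} together with the direct-sum decomposition for $\tilde{B_0}$ established in Proposition \ref{p4.8}. The strategy reduces the problem to a finite list of indecomposable building blocks, each of which has already been shown to have trivial Bogomolov multiplier.

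First, I would invoke the classification (Theorem 6.11 in \cite{13}, together with Theorem 6.1 in \cite{1} translated to the super setting) to write any nilpotent Lie superalgebra $L$ with $t(L)\leq 3$ as a direct sum
\[
L \;\cong\; H \oplus A,
\]
where $H$ is either a special Heisenberg Lie superalgebra $H_{(m,n)}$, a Heisenberg Lie superalgebra with odd center $H_m$, or (when the derived part is larger) a direct sum of a few such pieces, and $A$ is an abelian Lie superalgebra $A(p|q)$. This is precisely where the hypothesis $t(L)\leq 3$ is used, since it is the classification result that constrains the indecomposable summands to those already analysed.

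Next, I apply Proposition \ref{p4.8} repeatedly to break apart the Bogomolov multiplier along the direct sum:
\[
\tilde{B_0}(L)\;\cong\;\tilde{B_0}(H)\oplus\tilde{B_0}(A).
\]
For the Heisenberg summands I cite Theorem \ref{t5.4} and Theorem \ref{t5.5}, which yield $\tilde{B_0}(H_m)=0$ and $\tilde{B_0}(H_{(m,n)})=0$. For the abelian summand, one notes that for any abelian Lie superalgebra all brackets vanish, so $L^2=0$ and the Hopf-type formula in Proposition \ref{p4.1} collapses to $0$; alternatively, this is already contained in the Lie-algebra case of \cite{1} extended by Proposition \ref{p4.8}. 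Combining these vanishing results gives $\tilde{B_0}(L)=0$, as required.

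The only real obstacle is bookkeeping: making sure the classification genuinely covers every Lie superalgebra with $t(L)\leq 3$ as a direct sum of pieces of the three types above, without any stray indecomposable case that has not yet been handled. Once the classification is accepted as stated, the remainder of the proof is a one-line application of Proposition \ref{p4.8} together with Theorems \ref{t5.4}, \ref{t5.5} and the trivial case of abelian Lie superalgebras, so the argument is essentially a citation-and-decomposition proof rather than a computation.
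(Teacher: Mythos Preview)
Your proposal is correct and follows essentially the same strategy as the paper: invoke the classification from \cite{13}, decompose via Proposition~\ref{p4.8}, and apply the known vanishing results for the summands. One minor clarification: Theorem~6.1 in \cite{1} is not ``translated to the super setting'' but is applied directly to the pure Lie-algebra cases that appear in the classification list (not all of which are literally of Heisenberg-plus-abelian type), and the paper cites only Theorem~\ref{t5.5}, not Theorem~\ref{t5.4}, for the genuinely super pieces.
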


\begin{theorem}\label{t5.8}
Let $L$ be a $(m|n)$-super dimensional nilpotent Lie superalgebra and $t(L)=4$. Then ${\tilde{B_0}}(L)\neq 0$ if and only if 
$$L \cong <a,b,c,d,e\ \ | \ \ [a,b]=c , [a,c]=d , [a,d]=[b,c]=e>.$$
\begin{proof}
By using Theorem $6.11$ in \cite{13}, Theorems $6.1$ , $6.4$ in \cite{1}, Theorem \ref{t5.5} and Proposition \ref{p4.8}, the proof is obtained.
\end{proof}
\end{theorem}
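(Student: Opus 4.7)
The plan is to proceed by case analysis using the classification from Theorem 6.11 of \cite{13} together with Proposition 3.4 of \cite{11} (the same inputs invoked in the previous theorem), which enumerates, up to isomorphism, all $(m|n)$-super dimensional nilpotent Lie superalgebras $L$ with $t(L)=4$. That list splits naturally into two families: (a) superalgebras of the form $H\oplus A(p|q)$ where $H$ is a Heisenberg Lie superalgebra (either special or with odd center) and $A(p|q)$ is an abelian Lie superalgebra; and (b) superalgebras that, modulo a possible abelian odd summand, are a pure nilpotent Lie algebra of corank~$4$.

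For family (a), I would apply Proposition \ref{p4.8} to obtain
$$\tilde{B_0}(L)\cong \tilde{B_0}(H)\oplus \tilde{B_0}(A(p|q)).$$
The abelian summand contributes zero directly from the definition of $\mathcal{M}_0$, since for abelian $A$ every generator $m\wedge n$ already satisfies $[m,n]=0$ and thus lies in $\mathcal{M}_0(A)$, forcing $\tilde{B_0}(A(p|q))=0$. The Heisenberg summand contributes zero by Theorem \ref{t5.4} (odd center) or Theorem \ref{t5.5} (even center). Hence every superalgebra in family (a) has $\tilde{B_0}(L)=0$.

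For family (b), a further application of Proposition \ref{p4.8} strips off any abelian odd summand and reduces the question to computing $\tilde{B_0}$ for a nilpotent Lie algebra of corank~$4$. At this point Theorems~$6.1$ and $6.4$ of \cite{1} apply: they classify the corank-$4$ nilpotent Lie algebras with nontrivial Bogomolov multiplier, and identify a unique such algebra, namely the $5$-dimensional filiform algebra presented by $[a,b]=c$, $[a,c]=d$, $[a,d]=[b,c]=e$. Combining both families, we conclude $\tilde{B_0}(L)\neq 0$ precisely when $L$ is isomorphic to the stated algebra, viewed as a purely even Lie superalgebra.

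The main obstacle is ensuring that the classification in \cite{11} genuinely forces every corank-$4$ case to fit in one of the two families above; in particular, one must check that no essentially $\mathbb{Z}_2$-graded case with a nontrivial odd bracket remains outside family~(a). Once this bookkeeping is secured the argument is essentially a combination of Proposition \ref{p4.8} with the vanishing results already proved (Theorems \ref{t5.4} and \ref{t5.5}) and the Lie-algebra input from \cite{1}; no new computation on $L\curlywedge L$ is required.
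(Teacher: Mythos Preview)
Your proposal is correct and follows essentially the same approach as the paper: invoke the classification in \cite{13} (Theorem~6.11), split off abelian and Heisenberg summands via Proposition~\ref{p4.8}, kill those using Theorems~\ref{t5.4}/\ref{t5.5}, and reduce the remaining purely even cases to Theorems~6.1 and~6.4 of \cite{1}. Your write-up is in fact more explicit than the paper's (which simply lists the citations), and you correctly flag the one bookkeeping point that needs checking, namely that the $t(L)=4$ list in \cite{13} really does exhaust into these two families.
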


In the following section, we want to classify all real Lie superalgebras $L$ of dimension at most $4$, such that $\tilde{B_0}(L)=0$. 

\section{Computing the Bogomolov multiplier of real Lie superalgebras of dimension at most $4$}\label{sec6}
This section is devoted to obtain the Bogomolov multiplier for the real Lie super algebras of dimension at most $4$ which are not Lie algebras. We need the classification of these Lie superalgebras in \cite{3}. Nigel Backhouse in \cite{3}, classified these Lie superalgebras into two types trivial and non trivial. Note that the Lie superalgebra $L$ is trivial, if $[L_{\bar{1}},L_{\bar{1}}]=0$, otherwise $L$ is non trivial. According to the all notations in \cite{3}, we also denote the elements of $L_{\bar{0}}$ $($resp $L_{\bar{1}})$ by Latin letters $($resp Greek letters$)$ taken from the begining of the alphabet.

\begin{theorem}\label{t6.1} 
Let $L_{(0,1)}$ be a trivial Lie superalgebra of dimension $1$ with a basis $\{\alpha\}$ and the only Lie super bracket $[\alpha,\alpha]=0$. Then ${\tilde{B_0}}(L_{(0,1)})=0$.
\begin{proof}
$L_{(0,1)}$ is abelian so, its Bogomolov multiplier is trivial.
\end{proof}
\end{theorem}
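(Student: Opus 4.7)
The plan is to invoke the Hopf-type formula of Proposition \ref{p4.1}. The defining bracket $[\alpha,\alpha]=0$ together with the fact that $\alpha$ is the unique basis element of $L_{(0,1)}$ tells us that every super bracket in $L_{(0,1)}$ vanishes; in other words $L_{(0,1)}$ is an abelian Lie superalgebra with $L_{(0,1)}^2=0$. From this one expects, on general grounds, that the Bogomolov multiplier must be trivial, since $\mathcal{M}(L)$ and $\mathcal{M}_0(L)$ should both already absorb everything coming from commutators in a free cover.

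To make this precise I would pick a free presentation $0\to R\to F\to L_{(0,1)}\to 0$, taking $F$ to be the free Lie superalgebra on a single odd generator (again denoted $\alpha$) and $R$ the graded ideal generated by $[\alpha,\alpha]$. Since the image of $F^2$ in $L_{(0,1)}$ lies in $L_{(0,1)}^2=0$, the derived algebra satisfies $F^2\subseteq R$, so in particular $R\cap F^2=F^2$. By the very definition $F^2=[F,F]$, the set $K(F)$ linearly spans $F^2$, and the inclusion $K(F)\subseteq F^2\subseteq R$ now gives $K(F)\cap R=K(F)$, hence $\langle K(F)\cap R\rangle=F^2$. Feeding these equalities into Proposition \ref{p4.1} yields
\[
\tilde{B_0}(L_{(0,1)})\;\cong\;\frac{R\cap F^2}{\langle K(F)\cap R\rangle}\;=\;\frac{F^2}{F^2}\;=\;0,
\]
which is the desired conclusion.

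There is essentially no main obstacle; the argument reduces the statement to the more general principle that abelian Lie superalgebras have vanishing Bogomolov multiplier, of which $L_{(0,1)}$ is the smallest non-trivial instance. The only mild subtlety is to remember that \emph{abelian} here means that all super brackets vanish in $L_{(0,1)}$, while in the covering free Lie superalgebra $F$ on a single odd generator the element $[\alpha,\alpha]$ is genuinely non-zero in the even part. It is precisely this element which is quotiented out in passing from $F$ to $L_{(0,1)}$, and that is what forces $R\cap F^2$ and $\langle K(F)\cap R\rangle$ to coincide so cleanly.
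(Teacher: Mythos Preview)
Your proof is correct and follows the same principle the paper invokes---namely that an abelian Lie superalgebra has trivial Bogomolov multiplier---but you actually justify this via the Hopf-type formula of Proposition~\ref{p4.1}, whereas the paper's proof is the single sentence ``$L_{(0,1)}$ is abelian so its Bogomolov multiplier is trivial.'' Your argument is thus a fleshed-out version of the same idea, with the added value that it makes explicit the general fact (for any abelian $L$, $F^2\subseteq R$ forces $R\cap F^2=\langle K(F)\cap R\rangle=F^2$) that the paper leaves implicit.
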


From \cite{3}, there is one trivial Lie superalgebra of dimension $2$ with a basis $\{a,\alpha\}$ and the only non-zero Lie super bracket $[a,\alpha]=\alpha$.
  
\begin{theorem}\label{t6.2}
Let $L_{(1,1)}$ be a trival Lie superalgebra of dimension $2$. Then \\
${\tilde{B_0}}(L_{(1,1)})=0$.
\begin{proof}
We can see that $L_{(1,1)}\wedge L_{(1,1)} = <a\wedge \alpha , \alpha\wedge \alpha>$. Hence, for all $w\in \mathcal{M}(L_{(1,1)})\subseteq L_{(1,1)}\wedge L_{(1,1)}$, there exist $\alpha_1 ,\alpha_2\in \mathbb{R}$, such that $w=\alpha_1(a\wedge \alpha)+\alpha_2(\alpha\wedge \alpha)$. Now, considering  $\tilde{\kappa} : L_{(1,1)}\wedge L_{(1,1)} \to [L_{(1,1)},L_{(1,1)}]$ given by $x\wedge y \to [x,y]$. Since $\tilde{\kappa} (w)=0$, we have ${{\alpha}_1}[a,\alpha]+{\alpha_2}[\alpha,\alpha]=0$. Thus, ${{\alpha}_1}{\alpha}=0$. So ${{\alpha}_1}=0$. Hence $w=\alpha_2(\alpha\wedge \alpha)$. Also we can see that
$\alpha\wedge \alpha=\alpha\wedge \alpha+(-1)^{|a||a|}a\wedge a$, such that $[\alpha,\alpha]+(-1)^{|a||a|}[a,a]=0$. Thus $\alpha\wedge \alpha \in \mathcal{M}_0(L_{(1,1)})$ and $\mathcal{M}(L_{(1,1)}) \subseteq \mathcal{M}_0(L_{(1,1)})$. Thus ${\tilde{B_0}}(L_{(1,1)})=0$.
\end{proof}
\end{theorem}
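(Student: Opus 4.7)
The strategy mirrors the proofs of Theorems \ref{t5.4} and \ref{t5.5}: first pin down a small spanning set for $L_{(1,1)}\wedge L_{(1,1)}$, then parametrize $\mathcal{M}(L_{(1,1)})$ by imposing $\tilde{\kappa}(w)=0$, and finally exhibit every surviving generator as an element of $\mathcal{M}_0(L_{(1,1)})$, so that the quotient $\tilde{B_0}(L_{(1,1)})=\mathcal{M}(L_{(1,1)})/\mathcal{M}_0(L_{(1,1)})$ collapses.

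For the spanning set, I first observe that since $a$ is even with $[a,a]=0$, the relations in Definition \ref{d2} force $a\wedge a=0$, and the graded antisymmetry they imply lets me absorb $\alpha\wedge a$ into $-\,a\wedge\alpha$; hence $L_{(1,1)}\wedge L_{(1,1)}=\langle a\wedge\alpha,\ \alpha\wedge\alpha\rangle$. Writing a generic $w\in\mathcal{M}(L_{(1,1)})$ as $w=\alpha_1(a\wedge\alpha)+\alpha_2(\alpha\wedge\alpha)$ and imposing $\tilde{\kappa}(w)=\alpha_1[a,\alpha]+\alpha_2[\alpha,\alpha]=\alpha_1\alpha=0$ immediately forces $\alpha_1=0$, so $w$ reduces to $\alpha_2(\alpha\wedge\alpha)$.

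The heart of the argument is to show $\alpha\wedge\alpha\in\mathcal{M}_0(L_{(1,1)})$. I will invoke generator (i) in the description of $\mathcal{M}_0$ from Theorem \ref{t3.6} with $(m,n)=(\alpha,\alpha)$ and partner $(m',n')=(a,a)$: the hypothesis $[\alpha,\alpha]+(-1)^{|a||a|}[a,a]=0$ holds trivially because both brackets vanish and the parity sign is $+1$, so $\alpha\wedge\alpha+a\wedge a\in\mathcal{M}_0(L_{(1,1)})$, and combined with $a\wedge a=0$ this yields $\alpha\wedge\alpha\in\mathcal{M}_0(L_{(1,1)})$. Thus $\mathcal{M}(L_{(1,1)})\subseteq\mathcal{M}_0(L_{(1,1)})$ and $\tilde{B_0}(L_{(1,1)})=0$.

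The only non-mechanical point, and the step I would expect a reader to pause over, is precisely this choice of partner $(a,a)$: one needs $a$ to be even so that the sign $(-1)^{|a||a|}$ cooperates and the auxiliary commutator $[a,a]$ vanishes, thereby letting the single generator $\alpha\wedge\alpha$ (rather than a non-trivial combination) be recognized as a defining element of $\mathcal{M}_0$. Everything else is bookkeeping.
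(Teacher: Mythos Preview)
Your proof is correct and follows essentially the same route as the paper's: the same spanning set $\langle a\wedge\alpha,\ \alpha\wedge\alpha\rangle$, the same reduction via $\tilde\kappa(w)=0$ to $w=\alpha_2(\alpha\wedge\alpha)$, and the same pairing $(\alpha,\alpha)$ with $(a,a)$ to place $\alpha\wedge\alpha$ in $\mathcal{M}_0(L_{(1,1)})$. Your write-up even supplies a bit more justification (why $a\wedge a$ contributes nothing and why $\alpha\wedge a$ is redundant) than the paper does.
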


From \cite{3}, there are two types trivial Lie superalgebras of dimension $3$, which are denoted to $L_{(1,2)}$ and $L_{(2,1)}$. The Lie superalgebra $L_{(2,1)}$ has the basis ${a,b,\alpha}$, with the only non-zero Lie super brackets $[a,b]=b$, $[a,\alpha]=p{\alpha}$, where $p\neq
0$. Also we have following trivial Lie superalgebras of types $(1,2)$ that we showed them with $L^{i}_{(1,2)}$, where $i\in I=\{1,2,3,4\}$.
\begin{itemize}
\item{$L^{1}_{(1,2)}\cong <a,\alpha,\beta \ | \ [a,\alpha]=\alpha , [a,\beta]=p\beta \ \ ; \ \ 0<|p|\leq 1>,$}
\item{$L^{2}_{(1,2)}\cong <a,\alpha,\beta \ | \ [a,\beta]=\alpha>,$}
\item{$L^{3}_{(1,2)}\cong <a,\alpha,\beta \ | \ [a,\alpha]=\alpha , [a,\beta]=\alpha+\beta>,$}
\item{$L^{4}_{(1,2)}\cong <a,\alpha,\beta \ | \ [a,\alpha]=p\alpha-\beta , [a,\beta]=\alpha+p\beta \ \ ; \ \ p\geq 0>.$}
\end{itemize}

\begin{theorem}\label{t6.3}
Let $L$ be a trivial Lie superalgebra of dimension $3$. Then ${\tilde{B_0}}(L)=0$.
\begin{proof}
Let $L\cong L_{(2,1)}=<a,b,\alpha\  |\  [a,b]=b , [a,\alpha]=p{\alpha} \ ; \ p\neq 0>$. We have $L_{(2,1)}\wedge L_{(2,1)} = <a\wedge b , a\wedge \alpha , b\wedge \alpha , \alpha \wedge \alpha>$. So, for all $w\in \mathcal{M}(L_{(2,1)})\leq L_{(2,1)}\wedge L_{(2,1)}$, there exist $\alpha_1,\alpha_2,\alpha_3 ,\alpha_4 , \in \mathbb{R}$, such that $w=\alpha_1(a\wedge b)+\alpha_2(a\wedge \alpha)+\alpha_3(b\wedge \alpha)+\alpha_4(\alpha\wedge \alpha)$. Now, let $\tilde{\kappa} : L_{(2,1)}\wedge L_{(2,1)} \to {[L_{(2,1)},L_{(2,1)}]}$ given by $x\wedge y \to [x,y]$. Since $\tilde{\kappa} (w)=0$, we have $\alpha_1[a,b]+\alpha_2[a,\alpha]+\alpha_3[b,\alpha]+\alpha_4[\alpha,\alpha]=0$. So
 $\alpha_1 {b}+ \alpha_2 (p{\alpha})=0$ and so ${{\alpha}_1}= {{\alpha}_2}=0$. Thus, $w={{\alpha}_3}(b\wedge \alpha)+{{\alpha}_4}(\alpha \wedge \alpha)$. Also we can see that 
$$b\wedge \alpha=b\wedge \alpha+(-1)^{{|a|}{|a|}}a\wedge a \ \ \ , \ \ \ \alpha \wedge \alpha=\alpha \wedge \alpha+(-1)^{{|a|}{|a|}}a\wedge a.$$
So $w\in \mathcal{M}_0(L_{(2,1)})$ and $\mathcal{M}(L_{(2,1)}) \subseteq \mathcal{M}_0(L_{(2,1)})$. Hence ${\tilde{B_0}}(L_{(2,1)})=0$.
\\
Let $L\cong L^{4}_{(1,2)}=<a,\alpha,\beta \ | \ [a,\alpha]=p\alpha-\beta , [a,\beta]=\alpha+p\beta \ \ ; \ \ p\geq 0>$. We can check that $L^4_{(1,2)}\wedge L^4_{(1,2)} = <a\wedge \alpha , a\wedge \beta , \alpha\wedge \beta , \alpha \wedge \alpha , \beta\wedge \beta>$. Hence, for all $w\in \mathcal{M}(L^4_{(1,2)})\leq L^4_{(1,2)}\wedge L^4_{(1,2)}$, there exist $\alpha_1,\alpha_2,\alpha_3 ,\alpha_4 , \alpha_5\in \mathbb{R}$, such that $w=\alpha_1(a\wedge \alpha)+\alpha_2(a\wedge \beta)+\alpha_3(\alpha\wedge \beta)+\alpha_4(\alpha\wedge \alpha)+\alpha_5(\beta\wedge \beta)$. Now, considering  $\tilde{\kappa} : L^4_{(1,2)}\wedge L^4_{(1,2)} \to {[L^4_{(1,2)},L^4_{(1,2)}]}$ given by $x\wedge y \to [x,y]$. Since $\tilde{\kappa} (w)=0$, we have $\alpha_1[a,\alpha]+\alpha_2[a,\beta]+\alpha_3[\alpha,\beta]+\alpha_4[\alpha,\alpha]+\alpha_5[\beta,\beta]=0$. So
 $\alpha_1 (p\alpha-\beta) + \alpha_2 (\alpha+p{\beta})=0$ and 
$(\alpha_1{p}+\alpha_2)\alpha+(-\alpha_1+p{\alpha_2})\beta=0$.
 So, $\alpha_1{p}+\alpha_2=-\alpha_1+p{\alpha_2}=0$ and ${{\alpha}_1}= {{\alpha}_2}=0$. Thus, $w={{\alpha}_3}(\alpha\wedge \beta)+{{\alpha}_4}(\alpha \wedge \alpha)+{{\alpha}_5}(\beta\wedge \beta)$. \\ On the other hand, 
$$\alpha\wedge \beta=\alpha\wedge \beta+(-1)^{{|a|}{|a|}}a\wedge a \ \ \ , \ \ \ \alpha \wedge \alpha=\alpha \wedge \alpha+(-1)^{{|a|}{|a|}}a\wedge a$$
 $$\beta \wedge \beta=\beta \wedge \beta+(-1)^{{|a|}{|a|}}a\wedge a.$$
Thus $w\in \mathcal{M}_0(L^4_{(1,2)})$. So $\mathcal{M}(L^4_{(1,2)}) \subseteq \mathcal{M}_0(L^4_{(1,2)})$. Hence ${\tilde{B_0}}(L^4_{(1,2)})=0$. Similarly, we can see that ${\tilde{B_0}}(L^1_{(1,2)})={\tilde{B_0}}(L^2_{(1,2)})={\tilde{B_0}}(L^3_{(1,2)})=0$.

\end{proof}
\end{theorem}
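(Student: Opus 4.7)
The plan is to mirror the template that worked for the Heisenberg superalgebras (Theorems \ref{t5.4} and \ref{t5.5}) and for the cases $L_{(2,1)}$ and $L^4_{(1,2)}$ carried out explicitly in the statement. Five algebras have to be treated: $L_{(2,1)}$, $L^1_{(1,2)}$, $L^2_{(1,2)}$, $L^3_{(1,2)}$, $L^4_{(1,2)}$. Two of these are already done in the excerpt, so the job is to dispatch $L^1_{(1,2)}$, $L^2_{(1,2)}$ and $L^3_{(1,2)}$ by the same three-step procedure.

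For each of these three algebras, I would first list a generating set of $L\wedge L$ by basis wedges. Using the bilinearity relations (i)--(iii) of Definition \ref{d2} together with the anti-super-symmetry $x\curlywedge y+(-1)^{|x||y|}y\curlywedge x=0$, every element of $L\wedge L$ reduces to an $\mathbb{R}$-linear combination of the six wedges $a\wedge\alpha$, $a\wedge\beta$, $\alpha\wedge\beta$, $\alpha\wedge\alpha$, $\beta\wedge\beta$ (and possibly their sign-normalized variants). Next I would apply the Lie super homomorphism $\tilde\kappa\colon L\wedge L\to [L,L]$ of Proposition \ref{p4.1} to a general $w=\alpha_1(a\wedge\alpha)+\alpha_2(a\wedge\beta)+\alpha_3(\alpha\wedge\beta)+\alpha_4(\alpha\wedge\alpha)+\alpha_5(\beta\wedge\beta)$ and solve $\tilde\kappa(w)=0$ for the coefficients. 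Triviality of $L$ ensures $[L_{\bar 1},L_{\bar 1}]=0$, hence the wedges among odd basis elements contribute nothing to $\tilde\kappa(w)$, and the constraint reduces to a small linear system in $\alpha_1,\alpha_2$ forcing them both to vanish (this is where the bracket data of each specific $L^i_{(1,2)}$ enters: for $L^1_{(1,2)}$ one reads off $\alpha_1\alpha+p\alpha_2\beta=0$; for $L^2_{(1,2)}$ one has $\alpha_2\alpha=0$; for $L^3_{(1,2)}$ one has $\alpha_1\alpha+\alpha_2(\alpha+\beta)=0$).

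Once $\mathcal{M}(L)$ has been whittled down to a span of wedges $x\wedge y$ with $[x,y]=0$, I would invoke the standard trick already used throughout Section 5 to place each such wedge into $\mathcal{M}_0(L)$: for any even basis vector $a$ one has $[x,y]+(-1)^{|a||a|}[a,a]=0$ since both brackets vanish, so by relation (i) of $\mathcal{M}_0$ the element $x\wedge y+a\wedge a$ lies in $\mathcal{M}_0(L)$; and $a\wedge a\in\mathcal{M}_0(L)$ by relation (ii). Subtracting gives $x\wedge y\in\mathcal{M}_0(L)$. Applying this to each surviving generator yields $\mathcal{M}(L)\subseteq\mathcal{M}_0(L)$, hence $\tilde B_0(L)=0$.

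The only nuisance, rather than genuine obstacle, is the bookkeeping for $L^2_{(1,2)}$ and $L^3_{(1,2)}$ where the commutators $[a,\alpha]$ and $[a,\beta]$ are not scalar multiples of a single basis vector; one has to verify that the linear combinations obtained from $\tilde\kappa(w)=0$ really do force $\alpha_1=\alpha_2=0$, a check which is routine since $\alpha$ and $\beta$ are linearly independent. Everything else is identical to the $L_{(2,1)}$ and $L^4_{(1,2)}$ arguments already written out, so the proof can reasonably be written by listing the three outstanding cases in parallel and concluding with the phrase that the remaining cases follow by the same computation.
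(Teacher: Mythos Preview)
Your proposal is correct and follows the same approach as the paper's proof: list generators of $L\wedge L$, apply $\tilde\kappa$ to a generic element, solve the resulting linear system, and then show the surviving wedges lie in $\mathcal{M}_0(L)$ via the ``add $(-1)^{|a||a|}a\wedge a$'' trick. This is exactly what the paper does for $L_{(2,1)}$ and $L^4_{(1,2)}$ before dismissing $L^1_{(1,2)},L^2_{(1,2)},L^3_{(1,2)}$ with the word ``Similarly''.

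One small slip to fix when you write it up: for $L^2_{(1,2)}=\langle a,\alpha,\beta\mid [a,\beta]=\alpha\rangle$ the equation $\tilde\kappa(w)=0$ gives only $\alpha_2\alpha=0$, so $\alpha_2=0$ but $\alpha_1$ is \emph{not} forced to vanish (contrary to your blanket claim that both coefficients die). This does not break the argument, because the surviving term $\alpha_1(a\wedge\alpha)$ has $[a,\alpha]=0$ and is therefore swept into $\mathcal{M}_0(L)$ by your next step; just make sure the write-up reflects this rather than asserting $\alpha_1=0$.
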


From \cite{3}, there are three types trivial Lie superalgebras of dimension $4$, which are denoted to $L_{(3,1)}$, $L_{(2,2)}$ and $L_{(1,3)}$. We have the following presentations for trivial Lie superalgebras of types $(3,1)$, $(2,2)$ and $(1,3)$ that we denote them with $L^{i}_{(3,1)}$, $L^{i}_{(2,2)}$ and $L^{i}_{(1,3)}$ where $i\in I=\{1,...,6\}$.
\begin{itemize}
\item{$L^{1}_{(3,1)}\cong <a, b, c,\alpha \ | \ [b,c]=a , [b,\alpha]=\alpha>,$}
\item{$L^{2}_{(3,1)}\cong <a, b, c, \alpha \ | \ [a,c]=a , [b,c]=a+b , [c,\alpha]=q\alpha \ \ ; \ \ pq\neq 0>,$}
\item{$L^{3}_{(3,1)}\cong <a, b, c, \alpha, \ | \ [a,c]=pa-b , [b,c]=a+pb , [c,\alpha]=q\alpha \ \ ; \ \ q\neq 0>,$}
\item{$L^{1}_{(2,2)}\cong <a, b, \alpha, \beta \ | \ [a,\alpha]=\alpha , [a,\beta]=\beta , [b,\beta]=\alpha>,$}
\item{$L^{2}_{(2,2)}\cong <a, b, \alpha,\beta \ | \ [a,\alpha]=\alpha , [a,\beta]=\beta , [b,\alpha]=-\beta , [b,\beta]=\alpha>,$}
\item{$L^{3}_{(2,2)}\cong <a, b, \alpha,\beta \ | \ [a,b]=b , [a,\alpha]=p\alpha , [a,\beta]=q\beta \ \ ; \ \ pq\neq 0 \ , \ p\geq q>,$}
\item{$L^{4}_{(2,2)}\cong <a, b, \alpha,\beta \ | \ [a,b]=b , [a,\alpha]=p\alpha , [a,\beta]=\alpha+p\beta \ \ ; \ \ p\neq 0>,$}
\item{$L^{5}_{(2,2)}\cong <a, b, \alpha,\beta \ | \ [a,b]=b , [a,\alpha]=p\alpha-q\beta , [a,\beta]=q\alpha+p\beta \ \ ; \ \ q>0>,$}
\item{$L^{6}_{(2,2)}\cong <a, b, \alpha,\beta \ | \ [a,b]=b , [a,\alpha]=(p+1)\alpha , [a,\beta]=p\beta , [b,\beta]=\alpha>,$}
\item{$L^{1}_{(1,3)}\cong <a,\alpha,\beta, \gamma \ | \ [a,\alpha]=\alpha , [a,\beta]=p\beta , [a,\gamma]=q\gamma>,$}
\item{$L^{2}_{(1,3)}\cong <a,\alpha,\beta, \gamma \ | \ [a,\alpha]=\alpha , [a,\gamma]=\beta>,$}
\item{$L^{3}_{(1,3)}\cong <a,\alpha,\beta, \gamma \ | \ [a,\alpha]=p\alpha , [a,\beta]=\beta , [a,\gamma]=b+\gamma \ \ ; \ \ p\neq 0>,$}
\item{$L^{4}_{(1,3)}\cong <a,\alpha,\beta, \gamma \ | \ [a,\alpha]=p\alpha , [a,\beta]=q\beta-\gamma , [a,\gamma]=\beta+q\gamma \ \ ; \ \ q\geq 0 \ , \ p\neq 0>,$}
\item{$L^{5}_{(1,3)}\cong <a,\alpha,\beta, \gamma \ | \ [a,\beta]=\alpha , [a,\gamma]=\beta>,$}
\item{$L^{6}_{(1,3)}\cong <a,\alpha,\beta, \gamma \ | \ [a,\alpha]=\alpha , [a,\beta]=\alpha+\beta , [a,\gamma]=\beta+\gamma>.$}
\end{itemize}

\begin{theorem}\label{t6.4}
Let $L$ be a trivial Lie superalgebra of dimension $4$. Then ${\tilde{B_0}}(L)=0$.
\begin{proof}
Let $L\cong L^{3}_{(3,1)}=<a, b, c, \alpha, \ | \ [a,c]=pa-b , [b,c]=a+pb , [c,\alpha]=q\alpha \ \ ; \ \ q\neq 0>$. By using Definition \ref{d2}, we have
\begin{align*}
a\wedge \alpha &=([b,c]-pb)\wedge \alpha=[b,c]\wedge \alpha-p(b\wedge \alpha)\\
&=b\wedge [c,\alpha]-(-1)^{|b||c|}(c\wedge [b,\alpha])-p(b\wedge \alpha)\\
&=(q-p)(b\wedge \alpha).
\end{align*}
Thus, we have 
$$L^3_{(3,1)}\wedge L^3_{(3,1)} = <b\wedge \alpha , c\wedge \alpha , a\wedge b , a\wedge c , b\wedge c , \alpha \wedge \alpha>.$$ 
Hence, for all $w\in \mathcal{M}(L^3_{(3,1)})\subseteq L^3_{(3,1)}\wedge L^3_{(3,1)}$, there exist $\alpha_1,\alpha_2,\alpha_3 ,\alpha_4 , \alpha_5 , \alpha_6  \in \mathbb{R}$, such that $w=\alpha_1(b\wedge \alpha)+\alpha_2(c\wedge \alpha)+\alpha_3(a\wedge b)+\alpha_4(a\wedge c)+\alpha_5(b\wedge c)+\alpha_6(\alpha \wedge \alpha)$. Now, let $\tilde{\kappa} : L^3_{(3,1)}\wedge L^3_{(3,1)} \to {[L^3_{(3,1)},L^3_{(3,1)}]}$ given by $x\wedge y \to [x,y]$. Since $\tilde{\kappa} (w)=0$, we have $\alpha_1[b,\alpha]+\alpha_2[c,\alpha]+\alpha_3[a,b]+\alpha_4[a,c]+\alpha_5[b,c]+\alpha_6[\alpha,\alpha]=0$. So $\alpha_2(q\alpha)+\alpha_4(pa-b)+\alpha_5(a+pb)=0$ and $({\alpha_2}q)\alpha+({\alpha_4}p+\alpha_5)a+(-\alpha_4+{\alpha_5}p)b=0$. Since $q\neq 0$, $\alpha_2=0$ and $\alpha_4=\alpha_5=0$.
Thus, $w={{\alpha}_1}(b\wedge \alpha)+{{\alpha}_3}(a\wedge b)+{\alpha_6}(\alpha\wedge \alpha)$. Similar to the previous one, we can see that
$b\wedge \alpha , a\wedge b , \alpha\wedge \alpha \in \mathcal{M}_0(L^3_{(3,1)})$ and $w\in \mathcal{M}_0(L^3_{(3,1)})$. Thus $\mathcal{M}(L^3_{(3,1)}) \subseteq \mathcal{M}_0(L^3_{(3,1)})$. Hence ${\tilde{B_0}}(L^3_{(3,1)})=0$. Similarly, we have ${\tilde{B_0}}(L^1_{(3,1)})={\tilde{B_0}}(L^2_{(3,1)})=0$.
\\
\\
Let $L\cong L^6_{(2,2)}=<a, b, \alpha,\beta \ | \ [a,b]=b , [a,\alpha]=(p+1)\alpha , [a,\beta]=p\beta , [b,\beta]=\alpha>$. By using Definition \ref{d2}, we have
\begin{align*}
a\wedge \alpha &=a\wedge [b,\beta]=(-1)^{|\beta|(|a|+|b|)}([\beta,a]\wedge b)-(-1)^{|a||b|}([b,a]\wedge \beta) \\
&=-(-1)^{|\beta||a|}([a,\beta]\wedge b)+(-1)^{|b||a|}([a,b]\wedge \beta)\\
&=-p\beta \wedge b+b\wedge \beta=-p(\beta \wedge b)+b\wedge \beta\\
&=-p(-(-1)^{|\beta||b|}b\wedge \beta)+b\wedge \beta \\
&=(p+1)b\wedge \beta,
\end{align*}
and
\begin{align*}
\alpha \wedge \alpha &=\alpha\wedge [b,\beta]=(-1)^{|\beta|(|\alpha|+|b|)}([\beta,\alpha]\wedge b)-(-1)^{|\alpha||b|}([b,\alpha]\wedge \beta)=0
\end{align*}
Therefore, we have
$$L\cong L^6_{(2,2)}\wedge L\cong L^6_{(2,2)}=<a\wedge \beta , b\wedge \alpha , b\wedge \beta  , \alpha\wedge \beta , \beta\wedge \beta , a\wedge b>.$$
Hence, for all $w\in \mathcal{M}(L^6_{(2,2)})\subseteq L^6_{(2,2)}\wedge L^6_{(2,2)}$, there exist $\alpha_1,\alpha_2,\alpha_3 ,\alpha_4 , \alpha_5 , \alpha_6 \in \mathbb{R}$, such that 
\[w=\alpha_1(a\wedge \beta)+\alpha_2(b\wedge \alpha)+\alpha_3(b\wedge \beta)+\alpha_4(\alpha\wedge \beta)+\alpha_5(\beta\wedge \beta)+\alpha_6(a\wedge b).\]
Now, considering  $\tilde{\kappa} : L^6_{(2,2)}\wedge L^6_{(2,2)} \to {[L^6_{(2,2)},L^6_{(2,2)}]}$ given by $x\wedge y \to [x,y]$. Since $\tilde{\kappa} (w)=0$, we have $\alpha_1[a,\beta]+\alpha_2[b,\alpha]+\alpha_3[b,\beta]+\alpha_4[\alpha,\beta]+\alpha_5[\beta,\beta]+\alpha_6[a,b]=0$. So ${\alpha_1}(p\beta)+{\alpha_3}{\alpha}+{\alpha_6}b=0$. Thus if $p\neq 0$, then $\alpha_1=\alpha_3=\alpha_6=0$. Thus $w={\alpha_2}(b\wedge \alpha)+{\alpha_4}(\alpha \wedge \beta)+{\alpha_5}(\beta \wedge \beta)$ and $b\wedge \alpha , \alpha\wedge \beta , \beta\wedge \beta\in \mathcal{M}_0({L^6_{(2,2)}})$. Therefore $w\in \mathcal{M}_0(L^6_{(2,2)})$ and $\mathcal{M}(L^6_{(2,2)}) \subseteq \mathcal{M}_0(L^6_{(2,2)})$. Hence ${\tilde{B_0}}(L^6_{(2,2)})=0$. 
\\
But in case $p=0$, we have
$$L^6_{(2,2)}=<a, b, \alpha,\beta \ | \ [a,b]=b , [a,\alpha]=\alpha , [b,\beta]=\alpha>.$$
So 
\[w=\alpha_1(a\wedge \beta)+\alpha_2(b\wedge \alpha)+\alpha_4(\alpha\wedge \beta)+\alpha_5(\beta \wedge \beta).\]
Same as before, $a\wedge \beta , b\wedge \alpha , \alpha\wedge \beta , \beta\wedge \beta \in \mathcal{M}_0(L^6_{(2,2)})$ and finally, in this case, ${\tilde{B_0}}(L^6_{(2,2)})=0$. Similarly, we have ${\tilde{B_0}}(L^i_{(2,2)})=0$, where $i=1,...,5$.
\\
\\
Let 
$$L\cong L^4_{(1,3)}=<a,\alpha,\beta, \gamma \ | \ [a,\alpha]=p\alpha , [a,\beta]=q\beta-\gamma , [a,\gamma]=\beta+q\gamma \ \ ; \ \ q\geq 0 \ , \ p\neq 0>.$$
By using Definition \ref{d2}, we have
$$\alpha \wedge \beta=-p(\alpha\wedge \gamma) \ \ , \ \ \beta\wedge \beta=-q(\beta\wedge \gamma)+\gamma \wedge \gamma.$$
Therefore,
$$L^4_{(1,3)}\wedge L^4_{(1,3)}=<a\wedge \alpha , a\wedge \beta , a\wedge \gamma , \alpha\wedge \alpha , \alpha\wedge \gamma , \beta\wedge \gamma , \gamma\wedge \gamma>.$$
Hence, for all $w\in \mathcal{M}(L^4_{(1,3)})\subseteq L^4_{(1,3)}\wedge L^4_{(1,3)}$, there exist $\alpha_1,\alpha_2,\alpha_3 ,\alpha_4 , \alpha_5 , \alpha_6 , \alpha_7\in \mathbb{R}$, such that

\begin{align*}
w&=\alpha_1(a\wedge \alpha)+\alpha_2(a\wedge \beta)+\alpha_3(a\wedge \gamma)+\alpha_4(\alpha\wedge \alpha)\\
&+\alpha_5(\alpha\wedge \gamma)+\alpha_6(\beta\wedge \gamma)+\alpha_7(\gamma\wedge \gamma).
\end{align*}
Now, let $\tilde{\kappa} : L^4_{(1,3)}\wedge L^4_{(1,3)} \to {[L^4_{(1,3)},L^4_{(1,3)}]}$ given by $x\wedge y \to [x,y]$. Since $\tilde{\kappa} (w)=0$, we have \[\alpha_1[a,\alpha]+\alpha_2[a,\beta]+\alpha_3[a,\gamma]+\alpha_4[\alpha,\alpha]+\alpha_5[\alpha,\gamma]+\alpha_6[\beta,\gamma]+\alpha_7[\gamma,\gamma]=0.\]
So ${\alpha_1}(p\alpha)+{\alpha_2}(q\beta-\gamma)+{\alpha_3}(\beta+q\gamma)=0$. Hence $({\alpha_1}p)\alpha+({\alpha_2}q+\alpha_3)\beta+(-\alpha_2+{\alpha_3}q)\gamma=0$ and $\alpha_1=\alpha_2=\alpha_3=0$. Also we know 
\[(\alpha\wedge \alpha), (\alpha\wedge \gamma), (\beta\wedge \gamma), (\gamma\wedge \gamma)\in \mathcal{M}_0(L^4_{(1,3)}).\]
Therefore $\mathcal{M}(L^4_{(1,3)})\subseteq \mathcal{M}_0(L^4_{(1,3)})$ and ${\tilde{B_0}}(L^4_{(1,3)})=0$.
\end{proof}
\end{theorem}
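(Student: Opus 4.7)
The plan is to run a case-by-case analysis parallel to that used in Theorems \ref{t6.2} and \ref{t6.3}, treating each of the fifteen families $L^i_{(3,1)}$, $L^i_{(2,2)}$, $L^i_{(1,3)}$ from the cited classification. For a fixed algebra $L$, I would follow a uniform three-step template. First, I use identities (iv) and (v) of Definition \ref{d2} together with the given bracket relations to prune $L \wedge L$ down to a short spanning list of wedges; for instance, a relation of the form $a = [b,c] - pb$ in $L^3_{(3,1)}$ lets one rewrite $a \wedge \alpha$ as a scalar multiple of $b \wedge \alpha$, eliminating one generator, exactly as in the excerpt.

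Next, I write an arbitrary $w \in \mathcal{M}(L)$ as a real linear combination of the surviving wedges, apply the bracket map $\tilde{\kappa}(x \wedge y) = [x,y]$, and use $\tilde{\kappa}(w) = 0$ together with the linear independence of the nonzero basic brackets in $L^2$ to force the coefficients of every wedge whose bracket is nonzero to vanish. Since every algebra in the list is trivial, one has $[L_{\bar{1}}, L_{\bar{1}}] = 0$, so every wedge of two odd basis elements is automatically killed by $\tilde{\kappa}$ and survives this step, as do any even-even wedges $b \wedge c$ whose bracket is unspecified in the presentation.

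The final step is to place each surviving wedge into $\mathcal{M}_0(L)$ using the two defining clauses from Theorem \ref{t3.6}. For an even-even wedge $x \wedge y$ with $[x,y] = 0$, clause (ii) yields $x \wedge y \in \mathcal{M}_0(L)$ directly. For wedges involving odd elements, I use the device from the excerpt: any $a \in L_{\bar{0}}$ satisfies $[a,a] = 0$, so writing
\[
x \wedge y \;=\; x \wedge y + (-1)^{|a||a|}\, a \wedge a
\]
and noting that $[x,y] + (-1)^{|a||a|}[a,a] = 0$, clause (i) puts this sum in $\mathcal{M}_0(L)$, while clause (ii) forces $a \wedge a = 0$, so $x \wedge y \in \mathcal{M}_0(L)$. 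Combining the three steps gives $\mathcal{M}(L) \subseteq \mathcal{M}_0(L)$ and hence $\tilde{B_0}(L) = 0$.

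The principal obstacle is combinatorial: fifteen families must be handled, several of which carry free real parameters (for example $L^3_{(2,2)}$, $L^4_{(2,2)}$, $L^4_{(1,3)}$), and the vanishing or non-vanishing of such a parameter can change which wedges survive step two, as illustrated by the $p=0$ versus $p\neq 0$ split for $L^6_{(2,2)}$ in the excerpt. Each parametric degeneracy must be treated on its own, but the step-three recipe of pairing with $a \wedge a$ and invoking clauses (i)--(ii) of Theorem \ref{t3.6} is uniform and suffices throughout, so the only genuine work is verifying that steps one and two leave no wedge whose bracket is both nonzero and linearly independent of the others.
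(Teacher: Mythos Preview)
Your proposal is correct and follows essentially the same approach as the paper's own proof: a case-by-case analysis using the three-step template of (1) pruning $L\wedge L$ via the exterior-product identities, (2) applying $\tilde\kappa$ to kill the coefficients of wedges with nonzero bracket, and (3) placing each surviving wedge into $\mathcal{M}_0(L)$ via the pairing trick with an even $a\wedge a$. The paper carries out exactly these computations for the representative cases $L^3_{(3,1)}$, $L^6_{(2,2)}$ (including the $p=0$ degeneracy you anticipate), and $L^4_{(1,3)}$, and then asserts that the remaining families are handled ``similarly.''
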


\begin{corollary}
All trivial real Lie superalgebras of dimension at most $4$ have trivial Bogomolov multiplier.
\end{corollary}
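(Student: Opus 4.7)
The plan is simply to chain together the four theorems that have just been proved in this section. Backhouse's classification in \cite{3} is exhaustive: every trivial real Lie superalgebra of dimension at most $4$ (other than an ordinary Lie algebra, which is excluded in this section) is isomorphic to one of $L_{(0,1)}$ in dimension $1$; $L_{(1,1)}$ in dimension $2$; $L_{(2,1)}$ or $L^{i}_{(1,2)}$ with $i\in\{1,2,3,4\}$ in dimension $3$; or one of the superalgebras $L^{i}_{(3,1)}$ ($i\in\{1,2,3\}$), $L^{j}_{(2,2)}$, $L^{k}_{(1,3)}$ ($j,k\in\{1,\dots,6\}$) in dimension $4$, possibly supplemented by an abelian direct summand $A(r|s)$. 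My proposed proof proceeds by inspecting this list and quoting the relevant theorem for each entry.

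Concretely, I would first dispatch dimensions $1$ and $2$ by invoking Theorem \ref{t6.1} and Theorem \ref{t6.2}. For dimension $3$ I would apply Theorem \ref{t6.3} to the classes $L_{(2,1)}$ and $L^{i}_{(1,2)}$. For dimension $4$ I would apply Theorem \ref{t6.4} to each of the three families $L^{i}_{(3,1)}$, $L^{j}_{(2,2)}$, $L^{k}_{(1,3)}$. The possible abelian summands that enlarge a lower-dimensional trivial Lie superalgebra to one of dimension $4$ are handled by combining the above with Proposition \ref{p4.8}, together with the observation that for any abelian Lie superalgebra $A$ one has $A\curlywedge A=0$ and therefore $\tilde{B_0}(A)=0$ (since in Definition \ref{d3.4}(vii) the companion relations $[m,n]=0$ are automatic, so every generator of $A\wedge A$ lies in $\mathcal{M}_0(A,A)$).

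The common mechanism running through each of the individual computations in Theorems \ref{t6.2}--\ref{t6.4} is the same three-step pattern, which does not need to be repeated here: reduce a spanning set of $L\wedge L$ using the defining relations (often collapsing wedges like $x\wedge z$ with $z\in L^2$ to zero or to a multiple of another generator via clauses (iv)--(v) of Definition \ref{d2}); parametrise an arbitrary element $w$ of $\ker\tilde{\kappa}=\mathcal{M}(L)$; and then apply clause (vii) of Definition \ref{d3.4} to every surviving wedge, either in the obvious form $x\wedge y+(-1)^{|a||a|}a\wedge a$ when $[x,y]=0$, or via a paired relation already exhibited in the proofs above (e.g.\ $x_i\wedge x_{m+i}-y_m\wedge y_m$-type relations). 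In every case the conclusion is $\mathcal{M}(L)\subseteq\mathcal{M}_0(L)$, hence $\tilde{B_0}(L)=0$.

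There is no genuine obstacle: the only step requiring care is verifying that Backhouse's list has been covered completely, including the mixed cases in which a trivial Lie superalgebra of dimension $4$ decomposes as a direct sum with a non-trivial abelian summand. That bookkeeping, together with Proposition \ref{p4.8}, closes the argument and yields the corollary.
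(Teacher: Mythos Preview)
Your proposal is correct and matches the paper's approach: the corollary is stated in the paper without proof, as an immediate consequence of Theorems~\ref{t6.1}--\ref{t6.4}, which together exhaust Backhouse's list of trivial Lie superalgebras of dimension at most~$4$. Your additional remarks about abelian direct summands and Proposition~\ref{p4.8} are more careful than the paper (which treats the classification as already complete in each dimension), but they are harmless and do not change the essential argument.
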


Now we do the same calculations for all nontrivial real Lie superalgebras of dimension at most $4$.
\\
{\bf{5.1. Computing the Bogomolov multiplier of real nontrivial Lie superalgebras of dimension at most $4$}}
\\
From \cite{3}, there is one nontrivial Lie superalgebra of dimension $2$ with basis $a,\alpha$ and the only non-zero Lie super bracket $[\alpha,\alpha]=a$.
  
\begin{theorem}\label{t6.5}
Let $L$ be a nontrivial Lie superalgebra of dimension $2$. Then ${\tilde{B_0}}(L)=0$.
\begin{proof}
Let $L\cong L_{(1,1)}$. We can see that $L_{(1,1)}\wedge L_{(1,1)} = <a\wedge \alpha , \alpha\wedge \alpha>$. Hence, for all $w\in \mathcal{M}(L_{(1,1)})\subseteq L_{(1,1)}\wedge L_{(1,1)}$, there exist $\alpha_1 , \alpha_2\in \mathbb{R}$, such that $w=\alpha_1(a\wedge \alpha)+\alpha_2(\alpha\wedge \alpha)$. Now, considering  $\tilde{\kappa} : L_{(1,1)}\wedge L_{(1,1)} \to [L_{(1,1)},L_{(1,1)}]$ given by $x\wedge y \to [x,y]$. Since $\tilde{\kappa} (w)=0$, we have ${{\alpha}_1}[a,\alpha]+{\alpha_2}[\alpha,\alpha]=0$. Thus, ${{\alpha}_1}a=0$ and so ${{\alpha}_1}=0$. Hence $w=\alpha_2(\alpha\wedge \alpha)\in \mathcal{M}_0(L_{(1,1)})$ and $\mathcal{M}(L_{(1,1)}) \subseteq \mathcal{M}_0(L_{(1,1)})$. Thus ${\tilde{B_0}}(L_{(1,1)})=0$.
\end{proof}
\end{theorem}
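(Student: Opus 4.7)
The plan is to follow exactly the template established in Theorems 6.2 and 6.3: first spell out a spanning set for $L\wedge L$, next use $\tilde{\kappa}$ to cut down any $w\in\mathcal{M}(L)$ to a very small combination, and finally verify each surviving generator lies in $\mathcal{M}_0(L)$ via relations (i) and (ii) of Theorem \ref{t3.6}. Since $L=L_{(1,1)}$ has basis $\{a,\alpha\}$ with $a$ even and $\alpha$ odd, graded antisymmetry leaves me only the three symbols $a\wedge a$, $a\wedge \alpha$, $\alpha\wedge \alpha$, and $a\wedge a$ is immediately in $\mathcal{M}_0(L)$ by relation (ii) since $[a,a]=0$.

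Next I take an arbitrary $w\in \mathcal{M}(L)\subseteq L\wedge L$, write $w=\alpha_1(a\wedge \alpha)+\alpha_2(\alpha\wedge \alpha)$ with $\alpha_1,\alpha_2\in\mathbb{R}$, and apply $\tilde{\kappa}$. Here is the key contrast with the trivial case of Theorem \ref{t6.2}: in the nontrivial $L_{(1,1)}$ the bracket $[a,\alpha]$ vanishes while $[\alpha,\alpha]=a$, so $\tilde{\kappa}(w)=\alpha_2\,a=0$ forces $\alpha_2=0$. Thus every $w\in\mathcal{M}(L)$ is a scalar multiple of $a\wedge \alpha$.

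It remains to show $a\wedge \alpha\in \mathcal{M}_0(L)$. Because $\alpha$ is odd I cannot invoke relation (ii) directly; instead I use relation (i) with the choice $m'=n'=a$. Since $[a,\alpha]=0$ and $[a,a]=0$ we have $[a,\alpha]+(-1)^{|a||a|}[a,a]=0$, so $a\wedge \alpha+a\wedge a\in \mathcal{M}_0(L)$, and combined with $a\wedge a\in \mathcal{M}_0(L)$ this yields $a\wedge \alpha\in \mathcal{M}_0(L)$. Consequently $\mathcal{M}(L)\subseteq \mathcal{M}_0(L)$ and $\tilde{B_0}(L_{(1,1)})=0$.

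No step is a genuine obstacle; this is routine once the pattern of the earlier proofs is internalised. The only point requiring care is the reversal of roles between $\alpha_1$ and $\alpha_2$ compared with the trivial case (because now the bracket lives in the odd--odd slot, not the even--odd slot), and the small observation that placing $a\wedge \alpha$ into $\mathcal{M}_0(L)$ needs the pairing trick with $a\wedge a$ via relation (i), since $\alpha$ being odd rules out a direct appeal to relation (ii).
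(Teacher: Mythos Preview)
Your proof is correct and follows exactly the template of the paper's arguments in Theorems~\ref{t6.2}--\ref{t6.4}. In fact your version repairs a slip in the paper's printed proof of this theorem: there the authors write ``$\alpha_1 a=0$ and so $\alpha_1=0$, hence $w=\alpha_2(\alpha\wedge\alpha)\in\mathcal{M}_0(L_{(1,1)})$'', which is evidently copied from the trivial case. As you correctly point out, in the nontrivial $L_{(1,1)}$ one has $[a,\alpha]=0$ and $[\alpha,\alpha]=a$, so $\tilde{\kappa}(w)=\alpha_2\,a$ and it is $\alpha_2$ that must vanish, leaving $w=\alpha_1(a\wedge\alpha)$. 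Your concluding step---placing $a\wedge\alpha$ in $\mathcal{M}_0(L)$ via relation~(i) paired with $a\wedge a$, since relation~(ii) is unavailable for the odd $\alpha$---is precisely what is needed and is the piece the paper's argument, as written, omits.
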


From \cite{3}, there are two types nontrivial Lie superalgebras of dimension $3$, which are denoted by $L_{(1,2)}$ and $L_{(2,1)}$. The Lie superalgebra $L_{(2,1)}$ has the basis $\{a,b,\alpha \}$, with the only non-zero Lie super brackets $[a,b]=b$, $[a,\alpha]=1/2{\alpha}$ and $[\alpha,\alpha]=b$. Also we have the following nontrivial Lie superalgebras of types $(1,2)$ that we denote them by $L^{i}_{(1,2)}$, where $i\in I=\{1,2\}$.
\begin{itemize}
\item{$L^{1}_{(1,2)}\cong <a, \alpha , \beta \ | \ [\alpha,\alpha]=a , [\beta,\beta]=a>,$}
\item{$L^{2}_{(1,2)}\cong <a, \alpha, \beta \ | \ [\alpha,\alpha]=a , [\beta,\beta]=-a>.$}
\end{itemize}

\begin{theorem}\label{t6.6}
Let $L$ be a nontrivial Lie superalgebra of dimension $3$. Then ${\tilde{B_0}}(L)=0$.
\begin{proof}
Let $L\cong L^{1}_{(1,2)}=<a, \alpha , \beta \ | \ [\alpha,\alpha]=a , [\beta,\beta]=a>$. By using Definition \ref{d2}, $a\wedge \alpha, a\wedge \beta=0$. So we have
$L^1_{(1,2)}\wedge L^1_{(1,2)} = <\alpha\wedge \alpha , \alpha\wedge \beta , \beta\wedge \beta>$. Hence, for all $w\in \mathcal{M}(L^1_{(1,2)})\subseteq L^1_{(1,2)}\wedge L^1_{(1,2)}$, there exist $\alpha_1,\alpha_2,\alpha_3\in \mathbb{R}$, such that $w=\alpha_1(\alpha\wedge \alpha)+\alpha_2(\alpha\wedge \beta)+\alpha_3(\beta\wedge \beta)$. Now, let $\tilde{\kappa} : L^1_{(1,2)}\wedge L^1_{(1,2)} \to {[L^1_{(1,2)},L^1_{(1,2)}]}$ given by $x\wedge y \to [x,y]$. Since $\tilde{\kappa} (w)=0$, we have $\alpha_1[\alpha,\alpha]+\alpha_2[\alpha,\beta]+\alpha_3[\beta,\beta]=0$. So $(\alpha_1+\alpha_3)a=0$ and $\alpha_3=-\alpha_1$. Thus, $w={{\alpha}_1}(\alpha\wedge \alpha-\beta\wedge \beta)+{\alpha_2}(\alpha\wedge \beta)$. \\ 
Similar to the previous one, we can see that
$\alpha\wedge \beta \in \mathcal{M}_0(L^1_{(1,2)})$. On the other hand, we have
\[\alpha\wedge \alpha-\beta\wedge \beta=\alpha\wedge \alpha+(-1)^{|\beta||\beta|}\beta\wedge \beta \ \ \ ; \ \ \ [\alpha,\alpha]+(-1)^{|\beta||\beta|}[\beta,\beta]=0.\]
Therefore $\alpha\wedge \alpha+(-1)^{|\beta||\beta|}\beta\wedge \beta \in \mathcal{M}_0(L^1_{(1,2)})$ and $w\in \mathcal{M}_0(L^1_{(1,2)})$. Thus $\mathcal{M}(L^1_{(1,2)}) \subseteq \mathcal{M}_0(L^1_{(1,2)})$. Hence ${\tilde{B_0}}(L^1_{(1,2)})=0$. Similarly, ${\tilde{B_0}}(L^2_{(1,2)})=0$.
\end{proof}
\end{theorem}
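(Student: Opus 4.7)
The plan is to dispose of each of the three isomorphism classes separately, namely $L_{(2,1)}$ (with brackets $[a,b]=b$, $[a,\alpha]=\tfrac{1}{2}\alpha$, $[\alpha,\alpha]=b$) together with $L^{1}_{(1,2)}$ and $L^{2}_{(1,2)}$, following the template used in Theorems \ref{t6.3} and \ref{t6.4}. For each $L$, the scheme is: first compute a spanning set for $L\wedge L$ by listing pure wedges of basis vectors and using relations (iv)--(v) of Definition \ref{d2} to eliminate those whose factors already lie in $L^2$; then write an arbitrary $w\in\mathcal{M}(L)=\ker\tilde{\kappa}$ in terms of this spanning set; apply $\tilde{\kappa}$ and use linear independence of basis elements in $L^2$ to force constraints on the coefficients; finally identify each surviving summand with an element of $\mathcal{M}_0(L)$ by exhibiting a companion pure wedge whose bracket cancels (clause (i) of the $\mathcal{M}_0$-description) or by invoking the vanishing of an even commutator (clause (ii)).

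For $L^{1}_{(1,2)}$, relations (iv)--(v) applied to $a=[\alpha,\alpha]=[\beta,\beta]$ collapse $a\wedge\alpha$ and $a\wedge\beta$, leaving $L\wedge L=\langle\alpha\wedge\alpha,\ \alpha\wedge\beta,\ \beta\wedge\beta\rangle$. The kernel condition gives $(c_1+c_3)a=0$, hence $c_3=-c_1$, and $w=c_1(\alpha\wedge\alpha-\beta\wedge\beta)+c_2(\alpha\wedge\beta)$. The first term lies in $\mathcal{M}_0$ because $[\alpha,\alpha]+(-1)^{|\beta||\beta|}[\beta,\beta]=a-a=0$, while $\alpha\wedge\beta\in\mathcal{M}_0$ follows by pairing with $a\wedge a$, since $[\alpha,\beta]+[a,a]=0$. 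The case $L^{2}_{(1,2)}$ is entirely analogous, replacing the difference by the sum and using $[\alpha,\alpha]+[\beta,\beta]=a+(-a)=0$ to place $\alpha\wedge\alpha+\beta\wedge\beta$ in $\mathcal{M}_0$.

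The main case is $L_{(2,1)}$. I expect $L\wedge L$ to collapse to $\langle a\wedge b,\ a\wedge\alpha,\ \alpha\wedge\alpha\rangle$, with $b\wedge\alpha=[\alpha,\alpha]\wedge\alpha$ reducing to a multiple of itself via relation (iv) and super-antisymmetry, forcing $b\wedge\alpha=0$ over $\mathbb{R}$. Writing $w=\beta_1(a\wedge b)+\beta_2(a\wedge\alpha)+\beta_3(\alpha\wedge\alpha)$ and applying $\tilde{\kappa}$ yields $(\beta_1+\beta_3)b+\tfrac{1}{2}\beta_2\alpha=0$, hence $\beta_2=0$ and $\beta_3=-\beta_1$. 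The leftover $\beta_1(a\wedge b-\alpha\wedge\alpha)$ then lies in $\mathcal{M}_0(L)$ by clause (i), since $[a,b]+(-1)^{|\alpha||\alpha|}[\alpha,\alpha]=b-b=0$ places $a\wedge b+(-1)^{|\alpha||\alpha|}\alpha\wedge\alpha=a\wedge b-\alpha\wedge\alpha$ in $\mathcal{M}_0$.

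The main obstacle will be the careful spanning-set reduction for $L_{(2,1)}$: one must verify that the iterated use of (iv)--(v) genuinely collapses $b\wedge\alpha$ to zero and leaves only the three claimed independent generators, which is a bookkeeping task requiring both the derivation-like relations and the implicit super-antisymmetry of $\wedge$. Because $\mathbb{K}=\mathbb{R}$ has characteristic zero, the resulting linear relations can be inverted with no loss, and combining the three cases completes the proof that $\tilde{B_0}(L)=0$ for every nontrivial Lie superalgebra of dimension $3$.
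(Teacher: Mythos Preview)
Your proposal is correct and follows exactly the paper's method: reduce $L\wedge L$ to a small spanning set via the relations of Definition~\ref{d2}, compute $\ker\tilde\kappa$, and exhibit each surviving summand as a generator of $\mathcal{M}_0(L)$ via clause~(i) or~(ii). You are in fact more thorough than the paper's printed proof, which treats only $L^{1}_{(1,2)}$ explicitly and dismisses $L^{2}_{(1,2)}$ with ``Similarly,'' never writing out the case $L_{(2,1)}$ that you handle; as a minor bonus, note that for $L_{(2,1)}$ relation~(v) actually gives $a\wedge b=a\wedge[\alpha,\alpha]=\alpha\wedge\alpha$, so $a\wedge b-\alpha\wedge\alpha$ is already zero in $L\wedge L$, not merely in $\mathcal{M}_0$.
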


From \cite{3}, there are three types nontrivial Lie superalgebras of dimension $4$, which are denoted by $L_{(3,1)}$, $L_{(2,2)}$ and $L_{(1,3)}$. We have the following presentations for nontrivial Lie superalgebras of types $(3,1)$, $(2,2)$ and $(1,3)$ that we denote them by $L^{i}_{(3,1)}$, $L^{i}_{(2,2)}$ and $L^{i}_{(1,3)}$ where $i\in I=\{1,...,17\}$.
\begin{itemize}
\item{$L^{1}_{(3,1)}\cong <a, b, c,\alpha \ | \ [b,c]=a , [\alpha,\alpha]=a>,$}
\item{$L^{2}_{(3,1)}\cong <a, b, c, \alpha \ | \ [a,b]=b , [a,c]=pc , [a,\alpha]=\frac{1}{2}{\alpha} , [\alpha,\alpha]=b\ \ ; \ \ p\neq 0>,$}
\item{$L^{3}_{(3,1)}\cong <a, b, c, \alpha, \ | \ [a,b]=b , [a,c]=b+c , [a,\alpha]=\frac{1}{2}{\alpha} , [\alpha,\alpha]=b>,$}
\item{$L^{4}_{(3,1)}\cong <a, b, c, \alpha, \ | \ [a,b]=b , [a,c]=-b+c , [a,\alpha]=\frac{1}{2}{\alpha} , [\alpha,\alpha]=b>,$}
\item{$L^{1}_{(2,2)}\cong <a, b, \alpha, \beta \ | \ [a,b]=b , [a,\alpha]=\frac{1}{2}{\alpha} , [a,\beta]=\frac{1}{2}{\beta} , [\alpha,\alpha]=b , [\beta,\beta]=b>,$}
\item{$L^{2}_{(2,2)}\cong <a, b, \alpha,\beta \ | \ [a,b]=b , [a,\alpha]=\frac{1}{2}{\alpha} , [a,\beta]=\frac{1}{2}{\beta} , [\alpha,\alpha]=b , [\beta,\beta]=-b>,$}
\item{$L^{3}_{(2,2)}\cong <a, b, \alpha,\beta \ | \ [a,b]=b , [a,\alpha]=\frac{1}{2}{\alpha} , [a,\beta]=\frac{1}{2}{\beta} , [\alpha,\alpha]=b>,$}
\item{$L^{4}_{(2,2)}\cong <a, b, \alpha,\beta \ | \ [a,b]=b , [a,\alpha]=p{\alpha} , [a,\beta]=(1-p){\beta} , [\alpha,\beta]=b \ \ ; \ \ p\leq \frac{1}{2}>,$}
\item{$L^{5}_{(2,2)}\cong <a, b, \alpha,\beta \ | \ [a,b]=b , [a,\alpha]=\frac{1}{2}{\alpha} , [a,\beta]=\alpha+\frac{1}{2}{\beta} , [\beta,\beta]=b>,$}
\item{$L^{6}_{(2,2)}\cong <a, b, \alpha,\beta \ | \ [a,b]=b , [a,\alpha]=\frac{1}{2}{\alpha}-p\beta , [a,\beta]=p\alpha+\frac{1}{2}{\beta} , [\alpha,\alpha]=b , [\beta,\beta]=b \ \ ; \ \ p>0>,$}
\item{$L^{7}_{(2,2)}\cong <a, b, \alpha,\beta \ | \ [a,b]=b , [a,\alpha]={\alpha} , [b,\beta]=\alpha , [\beta,\beta]=a , [\alpha,\beta]=-\frac{1}{2}b>,$}
\item{$L^{8}_{(2,2)}\cong <a, b, \alpha,\beta \ | \ [a,b]=b , [a,\alpha]={\alpha} , [b,\beta]=\alpha , [\beta,\beta]=-a , [\alpha,\beta]=\frac{1}{2}b>,$}
\item{$L^{9}_{(2,2)}\cong <a, b, \alpha,\beta \ | \ [\alpha,\alpha]=a , [\beta,\beta]=b>,$}
\item{$L^{10}_{(2,2)}\cong <a, b, \alpha,\beta \ | \ [\alpha,\alpha]=a , [\beta,\beta]=b , [\alpha,\beta]=a>,$}
\item{$L^{11}_{(2,2)}\cong <a, b, \alpha,\beta \ | \ [\alpha,\alpha]=a , [\beta,\beta]=b , [\alpha,\beta]=p(a+b) \ \ ; \ \ p>0>,$}
\item{$L^{12}_{(2,2)}\cong <a, b, \alpha,\beta \ | \ [\alpha,\alpha]=a , [\beta,\beta]=b , [\alpha,\beta]=p(a-b) \ \ ; \ \ p>0>,$}
\item{$L^{13}_{(2,2)}\cong <a, b, \alpha,\beta \ | \ [a,b]=b , [a,\alpha]=\alpha , [\alpha,\beta]=b>,$}
\item{$L^{14}_{(2,2)}\cong <a, b, \alpha,\beta \ | \ [a,b]=b , [a,\alpha]=\frac{1}{2}\alpha , [\alpha,\alpha]=b>,$}
\item{$L^{15}_{(2,2)}\cong <a, b, \alpha,\beta \ | \ [a,\alpha]=\alpha , [a,\beta]=-\beta , [a,\beta]=b>,$}
\item{$L^{16}_{(2,2)}\cong <a, b, \alpha,\beta \ | \ [a,\beta]=\alpha , [\beta,\beta]=b>,$}
\item{$L^{17}_{(2,2)}\cong <a, b, \alpha,\beta \ | \ [a,\alpha]=-\beta , [a,\beta]=\alpha , [\alpha,\alpha]=b , [\beta,\beta]=b>,$}
\item{$L^{1}_{(1,3)}\cong <a,\alpha,\beta, \gamma \ | \ [\alpha,\alpha]=a , [\beta,\beta]=a , [\gamma,\gamma]=a>,$}
\item{$L^{2}_{(1,3)}\cong <a,\alpha,\beta, \gamma \ | \ [\alpha,\alpha]=a , [\beta,\beta]=a , [\gamma,\gamma]=-a>.$}

\end{itemize}

\begin{theorem}\label{t6.7}
Let $L$ be a nontrival Lie superalgebra of dimension $4$. Then\\ ${\tilde{B_0}}(L)=0$.
\begin{proof}
Let $L\cong L^{3}_{(3,1)}=<a, b, c, \alpha, \ | \ [a,b]=b , [a,c]=b+c , [a,\alpha]=\frac{1}{2}{\alpha} , [\alpha,\alpha]=b>$. According to the Definition \ref{d2}
$$a\wedge b=\alpha \wedge \alpha \ \ , \ \ b\wedge c=0 \ \ , \ \ c\wedge \alpha=-\frac{2}{3}(b\wedge \alpha),$$ 
We have $L^3_{(3,1)}\wedge L^3_{(3,1)} = <a\wedge b , a\wedge c , a\wedge \alpha , b\wedge \alpha>$. Hence, for all $w\in \mathcal{M}(L^3_{(3,1)})\subseteq L^3_{(3,1)}\wedge L^3_{(3,1)}$, there exist $\alpha_1,\alpha_2,\alpha_3 ,\alpha_4  \in \mathbb{R}$, such that $w=\alpha_1(a\wedge b)+\alpha_2(a\wedge c)+\alpha_3(a\wedge \alpha)+\alpha_4(b\wedge \alpha)$. Now, let $\tilde{\kappa} : L^3_{(3,1)}\wedge L^3_{(3,1)} \to {[L^3_{(3,1)},L^3_{(3,1)}]}$ be given by $x\wedge y \to [x,y]$. Since $\tilde{\kappa} (w)=0$, we have $\alpha_1[a,b]+\alpha_2[a,c]+\alpha_3[a,\alpha]+\alpha_4[b,\alpha]=0$ and so $(\alpha_1+\alpha_2)b+{\alpha_2}c+\frac{1}{2}{\alpha_3}{\alpha} =0$ and $\alpha_1=\alpha_2=\alpha_3=0$. Thus, $w=\alpha_4(b\wedge \alpha)$ and  $b\wedge \alpha \in \mathcal{M}_0(L^3_{(3,1)})$. Hence $\mathcal{M}(L^3_{(3,1)}) \subseteq \mathcal{M}_0(L^3_{(3,1)})$. Thus ${\tilde{B_0}}(L^3_{(3,1)})=0$. 
\\
\\
Now, let $L\cong L^1_{(2,2)}=<a, b, \alpha, \beta \ | \ [a,b]=b , [a,\alpha]=\frac{1}{2}{\alpha} , [a,\beta]=\frac{1}{2}{\beta} , [\alpha,\alpha]=b , [\beta,\beta]=b>$. Since 
$$a\wedge b=\alpha \wedge \alpha=\beta \wedge \beta \ \ , \ \ b\wedge \alpha=b\wedge \beta=0,$$
we have
$$L^1_{(2,2)}\wedge L^1_{(2,2)}=<a\wedge b , a\wedge \alpha , a\wedge \beta , \alpha\wedge \beta>.$$
For all $w\in \mathcal{M}(L^1_{(2,2)})\subseteq L^1_{(2,2)}\wedge L^1_{(2,2)}$, there exist $\alpha_1,\alpha_2,\alpha_3,\alpha_4\in \mathbb{R}$ such that $$w=\alpha_1(a\wedge b)+\alpha_2(a\wedge \alpha)+\alpha_3(a\wedge \beta)+\alpha_4(\alpha\wedge \beta).$$
Now let $\tilde{\kappa}: L^1_{(2,2)}\wedge L^1_{(2,2)}\to [L^1_{(2,2)},L^1_{(2,2)}]$ given by $x\wedge y\to [x,y]$. Since $\tilde{\kappa}(w)=0$, we have
$$\alpha_1[a,b]+\alpha_2[a,\alpha]+\alpha_3[a,\beta]+\alpha_4[\alpha,\beta]=0.$$
Thus ${\alpha_1}b+\frac{1}{2}{\alpha_2}\alpha+\frac{1}{2}{\alpha_3}\beta=0$, so $\alpha_1=\alpha_2=\alpha_3=0$. Hence $w=\alpha_4(\alpha\wedge \beta)$. So, $w\in \mathcal{M}_0(L^1_{(2,2)})$ and $\mathcal{M}(L^1_{(2,2)})\subseteq \mathcal{M}_0(L^1_{(2,2)})$. Therefore ${\tilde{B_0}}(L^1_{(2,2)})=0$. 
\\
\\

Let $L\cong L^{11}_{(2,2)}=<a,b,\alpha,\beta \ | \ [\alpha,\alpha]=a , [\beta,\beta]=a , [\alpha,\beta]=p(a+b) \ ; \ p>0>$. \\By using Definition \ref{d2}, we have
$$a\wedge b=[\alpha,\alpha]\wedge b=\alpha\wedge [\alpha,b]-(-1)^{|\alpha||\alpha|}(\alpha\wedge [\alpha,b])=0,$$
and
$$a\wedge \beta=2p(\alpha\wedge a+\alpha\wedge b) \ \ , \ \ b\wedge \alpha=2p(\beta\wedge a +\beta\wedge b).$$
Hence, we see that
$$L^{11}_{(2,2)}\wedge L^{11}_{(2,2)}=<a\wedge \alpha, b\wedge \alpha, \alpha\wedge \alpha, \alpha\wedge \beta, \beta\wedge \beta>.$$
So for all $w\in \mathcal{M}(L^{11}_{(2,2)})\subseteq L^{11}_{(2,2)}\wedge L^{11}_{(2,2)}$, there exist $\alpha_1,\alpha_2,\alpha_3,\alpha_4,\alpha_5 \in \mathbb{R}$ such that
\begin{align*}
w&=\alpha_1(a\wedge \alpha)+\alpha_2(b\wedge \alpha)+\alpha_3(\alpha\wedge \alpha)\\
&+\alpha_4(\alpha\wedge \beta)+\alpha_5(\beta\wedge \beta).
\end{align*}
By using $\tilde{\kappa}: L^{11}_{(2,2)}\wedge L^{11}_{(2,2)}\to [L^{11}_{(2,2)},L^{11}_{(2,2)}]$ given by $x\wedge y\to [x,y]$. Since $\tilde{\kappa}(w)=0$, we have
$$\alpha_1[a,\alpha]+\alpha_2[b,\alpha]+\alpha_3[\alpha,\alpha]+\alpha_4[\alpha,\beta]+\alpha_5[\beta,\beta]=0.$$
Thus $(\alpha_3+p\alpha_4)a+(\alpha_5+p\alpha_4)b=0$ and so,  $\alpha_3=\alpha_5=-p\alpha_4$. Hence
\begin{align*}
w&=\alpha_1(a\wedge \alpha)+\alpha_2(b\wedge \alpha)+\alpha_4(\alpha\wedge \beta-p\alpha\wedge \alpha-p\beta\wedge \beta).
\end{align*}
On the other hand, we have
$$\alpha\wedge \beta-p\alpha\wedge \alpha-p\beta\wedge \beta=((\alpha-p\beta)\wedge \beta)-(p\alpha\wedge \alpha)=((\alpha-p\beta)\wedge \beta)+(-1)^{|p\alpha||\alpha|}(p\alpha\wedge \alpha),$$
and
$$[\alpha-p\beta,\beta]+(-1)^{|p\alpha||\alpha|}[p\alpha,\alpha]=0.$$
Thus $\alpha\wedge \beta-p\alpha\wedge \alpha-p\beta\wedge \beta\in \mathcal{M}_0(L^{11}_{(2,2)})$. Also, $a\wedge \alpha, b\wedge \alpha \in \mathcal{M}_0(L^{11}_{(2,2)})$. Therefore, $w\in \mathcal{M}_0(L^{11}_{(2,2)})$. So $\mathcal{M}(L^{11}_{(2,2)})\subseteq \mathcal{M}_0(L^{11}_{(2,2)})$. Hence $\tilde{B_0}(L^{11}_{(2,2)})=0$. Similarly, $\tilde{B_0}(L^{12}_{(2,2)})=0$.
\\
\\
Let $L\cong L^1_{(1,3)}=<a,\alpha,\beta, \gamma \ | \ [\alpha,\alpha]=a , [\beta,\beta]=a , [\gamma,\gamma]=a>$. 
According to the Definition \ref{d2}, we have
$$a\wedge \alpha = a\wedge \beta = a\wedge \gamma =0.$$
Hence, we see that
$$L^1_{(1,3)}\wedge L^1_{(1,3)}=<\alpha\wedge \alpha , \alpha\wedge \beta , \alpha\wedge \gamma , \beta\wedge \beta , \beta\wedge \gamma , \gamma\wedge \gamma>.$$
So for all $w\in \mathcal{M}(L^1_{(1,3)})\subseteq L^1_{(1,3)}\wedge L^1_{(1,3)}$, there exist $\alpha_1,\alpha_2,\alpha_3,\alpha_4,\alpha_5,\alpha_6 \in \mathbb{R}$ such that
\begin{align*}
w&=\alpha_1(\alpha\wedge \alpha)+\alpha_2(\alpha\wedge \beta)+\alpha_3(\alpha\wedge \gamma)\\
&+\alpha_4(\beta\wedge \beta)+\alpha_5(\beta\wedge \gamma)+\alpha_6(\gamma\wedge \gamma).
\end{align*}
By using a Lie super homomorphism $\tilde{\kappa}: L^1_{(1,3)}\wedge L^1_{(1,3)}\to [L^1_{(1,3)},L^1_{(1,3)}]$ given by $x\wedge y\to [x,y]$, since $\tilde{\kappa}(w)=0$, we have
$$\alpha_1[\alpha,\alpha]+\alpha_2[\alpha,\beta]+\alpha_3[\alpha,\gamma]+\alpha_4[\beta,\beta]+\alpha_5[\beta,\gamma]+\alpha_6[\gamma,\gamma]=0.$$
Thus $(\alpha_1+\alpha_4+\alpha_6)a=0$ and $\alpha_6=-\alpha_1-\alpha_4$. Hence
\begin{align*}
w&=\alpha_1(\alpha\wedge \alpha-\gamma\wedge \gamma)
+\alpha_2(\alpha\wedge \beta)+\alpha_3(\alpha\wedge \gamma)\\
&+\alpha_4(\beta\wedge \beta-\gamma\wedge \gamma)+\alpha_5(\beta\wedge \gamma).
\end{align*}
Now, since 
$$\alpha \wedge \alpha - \gamma \wedge \gamma=\alpha \wedge \alpha+(-1)^{|\gamma||\gamma|}(\gamma \wedge \gamma) \ \ , \ \ [\alpha,\alpha]+(-1)^{|\gamma||\gamma|}[\gamma,\gamma]=0,$$
and
$$\beta \wedge \beta - \gamma \wedge \gamma=\beta \wedge \beta+(-1)^{|\gamma||\gamma|}(\gamma \wedge \gamma) \ \ , \ \ [\beta,\beta]+(-1)^{|\gamma||\gamma|}[\gamma,\gamma]=0,$$
 we have $(\alpha \wedge \alpha - \gamma \wedge \gamma), (\beta \wedge \beta - \gamma \wedge \gamma)\in \mathcal{M}_0(L^1_{(1,3)})$. 
 \\
 Also, $\alpha\wedge \beta, \alpha\wedge \gamma, \beta \wedge \gamma \in \mathcal{M}_0(L^1_{(1,3)})$. Thus, $w\in \mathcal{M}_0(L^1_{(1,3)})$. So $\mathcal{M}(L^1_{(1,3)})\subseteq \mathcal{M}_0(L^1_{(1,3)})$. Hence $\tilde{B_0}(L^1_{(1,3)})=0$. Similarly, $\tilde{B_0}(L^2_{(1,3)})=0$.
 \\
In general, for $i=1,2,4$, $\tilde{B_0}(L^i_{(3,1)})=0$, for $i=2,...,17$, $\tilde{B_0}(L^i_{(2,2)})=0$ and $\tilde{B_0}(L^2_{(1,3)})=0$.

\end{proof}
\end{theorem}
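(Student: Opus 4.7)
The plan is to proceed case-by-case through the Backhouse classification of nontrivial $(3,1)$, $(2,2)$, and $(1,3)$ Lie superalgebras of dimension $4$, repeating the template already established in Theorems \ref{t6.3}, \ref{t6.4}, and \ref{t6.6}. For each of the $23$ algebras $L^i_{(3,1)}$, $L^i_{(2,2)}$, $L^i_{(1,3)}$ in the list, I would first exploit the bracket relations (iv) and (v) of Definition \ref{d2} to collapse the naive spanning set $\{x\wedge y : x,y\in\text{basis}\}$ of $L\wedge L$ to a much smaller generating set. A recurring reduction is that $a\wedge \xi = \pm\tfrac{1}{c}\, a\wedge[\xi,\xi]$ when $[\xi,\xi]=c\cdot a$, and this typically forces several candidate generators to vanish outright; similarly, mixed brackets of the form $b\wedge\alpha$ often become scalar multiples of something already present (as in $L^3_{(3,1)}$ where $c\wedge\alpha=-\tfrac{2}{3}(b\wedge\alpha)$).

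Next I would take an arbitrary element $w\in\mathcal{M}(L)\subseteq L\wedge L$, write it as a general $\Bbb{R}$-linear combination of the surviving generators, and impose $\tilde{\kappa}(w)=0$ where $\tilde{\kappa}:L\wedge L\to L^{2}$ sends $x\wedge y\mapsto[x,y]$. Since $L^{2}$ has an explicit basis read off from the defining relations, linear independence yields a small linear system in the coefficients. Solving it leaves $w$ as a combination of two harmless kinds of summands: pure generators $m\wedge n$ whose bracket $[m,n]$ is zero (which belong to $\mathcal{M}_0(L)$ by clause (ii) of the defining relations when both are even, or by coupling with a zero-bracket even pair in clause (i) when the parities differ), and balanced differences such as $\alpha\wedge\alpha-\beta\wedge\beta$ or $\alpha\wedge\beta-p\,\alpha\wedge\alpha-p\,\beta\wedge\beta$ whose constituent brackets cancel, which lie in $\mathcal{M}_0(L)$ directly by clause (i). Hence $\mathcal{M}(L)\subseteq\mathcal{M}_0(L)$ and $\tilde{B_0}(L)=0$.

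The main obstacle is not conceptual but bookkeeping. The $(2,2)$ family is by far the longest, and subcases emerge: in algebras parametrised by $p$ (for example $L^4_{(2,2)}$, $L^6_{(2,2)}$, $L^{11}_{(2,2)}$), the coefficient system genuinely depends on whether $p$ vanishes or not, exactly as in the $p=0$ branch of $L^6_{(2,2)}$ handled in Theorem \ref{t6.4}. To keep the presentation readable I would work out one representative from each structural type in detail, namely $L^3_{(3,1)}$, $L^1_{(2,2)}$, $L^{11}_{(2,2)}$, and $L^1_{(1,3)}$, then record that the same procedure, with only the coefficients changed, disposes of every remaining case. A uniform preliminary lemma saying that whenever the homogeneous center has the form $[\xi,\xi]=a$ for odd $\xi$ then $a\wedge z=0$ for all $z$ would shorten many computations, and I would state it once before launching the case analysis.
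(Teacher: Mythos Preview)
Your proposal is correct and follows essentially the same approach as the paper: reduce the spanning set of $L\wedge L$ via the relations of Definition~\ref{d2}, solve the linear system coming from $\tilde\kappa(w)=0$, and check that the surviving terms lie in $\mathcal{M}_0(L)$ using the two clauses defining it. You have even selected the same four representative cases ($L^3_{(3,1)}$, $L^1_{(2,2)}$, $L^{11}_{(2,2)}$, $L^1_{(1,3)}$) that the paper treats in detail before asserting the remaining ones by analogy; your suggested preliminary lemma about $a\wedge z=0$ when $a=[\xi,\xi]$ is a modest organizational refinement but not a departure in method.
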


\begin{corollary}
All nontrivial real Lie superalgebras of dimension at most $4$ have trivial Bogomolov multiplier.
\end{corollary}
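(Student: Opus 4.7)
The plan is to mimic the strategy established earlier in the paper (in the proofs of Theorems 5.4, 5.5, 6.3, 6.4, 6.6) and repeat it for each of the 23 classes of nontrivial Lie superalgebras of dimension 4 listed before the statement. For a single class $L$ the recipe is the following: (a) use the relations in Definition \ref{d2} to trim the generating set of $L\wedge L$ by rewriting as many wedges as possible; (b) write an arbitrary element $w\in\mathcal{M}(L)\subseteq L\wedge L$ as a linear combination of the surviving generators with real coefficients; (c) apply the canonical Lie super homomorphism $\tilde{\kappa}:L\wedge L\to L^{2}$, $x\wedge y\mapsto [x,y]$, and read off the linear constraints on the coefficients that follow from $\tilde{\kappa}(w)=0$; (d) verify that each remaining basis wedge, or each surviving linear combination of wedges, falls into $\mathcal{M}_{0}(L)$ via one of the two defining relations of $\mathcal{M}_0$ in Definition \ref{d3.4}(vii), and conclude $\mathcal{M}(L)\subseteq\mathcal{M}_{0}(L)$, whence $\tilde{B_0}(L)=0$.

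Concretely I would process the list in three blocks, corresponding to the three sub-type headings: the four Lie superalgebras $L^{i}_{(3,1)}$ ($i=1,\dots,4$), the seventeen Lie superalgebras $L^{i}_{(2,2)}$ ($i=1,\dots,17$), and the two Lie superalgebras $L^{i}_{(1,3)}$ ($i=1,2$). The paper already works out one representative from each block ($L^{3}_{(3,1)}$, $L^{1}_{(2,2)}$, $L^{11}_{(2,2)}$, $L^{1}_{(1,3)}$), so for the remaining cases I would only record: the simplified presentation of $L\wedge L$ after applying Definition \ref{d2}(iv),(v); the vanishing coefficients produced by $\tilde{\kappa}(w)=0$ together with the resulting dependencies among the remaining coefficients; and the explicit pairings $([m,n],\pm[m',n'])$ witnessing that the surviving generators sit in $\mathcal{M}_{0}(L)$. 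In every case the strategy for eliminating the ``diagonal'' wedges $\alpha\wedge\alpha$, $\beta\wedge\beta$, etc., is the same trick used in the proofs of Theorems 5.4--5.5 and 6.6: when two brackets $[m,n]$ and $[m',n']$ produce the same central element (up to sign) one combines the wedges $m\wedge n\pm m'\wedge n'$ and places the combination in $\mathcal{M}_{0}(L)$; remaining ``off-diagonal'' wedges $x\wedge y$ with $[x,y]=0$ go into $\mathcal{M}_{0}(L)$ directly via Definition \ref{d3.4}(vii) second clause (often pairing with an even $a\wedge a$).

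The main obstacle is not conceptual but bookkeeping: for each of the 20 unresolved cases one must (i) correctly expand mixed-parity wedges using the graded identities, which involves tracking signs $(-1)^{|m||n|}$, and (ii) identify in each case which commuting pair $([m,n],\pm[m',n'])$ certifies each wedge as an element of $\mathcal{M}_{0}(L)$. The trickiest examples are $L^{5}_{(2,2)}$, $L^{6}_{(2,2)}$, $L^{7}_{(2,2)}$ and $L^{8}_{(2,2)}$, where both the even subalgebra is non-abelian and there are mixed super brackets $[\alpha,\beta]\neq 0$ together with self-brackets $[\beta,\beta]=\pm a$; there the $\tilde{\kappa}$ linear system couples several coefficients simultaneously, so one must solve a small linear system over $\mathbb{R}$ before recognising the leftover wedges as members of $\mathcal{M}_{0}(L)$. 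In all cases, however, the dimension of $L^{2}$ is at most $3$, so the linear system has at most three equations and is trivially solvable by hand.

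After handling each individual superalgebra, I would collect the outcomes in a single closing sentence: for every $i$ in the relevant range one has $\mathcal{M}(L)\subseteq\mathcal{M}_{0}(L)$, hence $\tilde{B_0}(L)=0$, completing the proof of the theorem and yielding the subsequent corollary that all nontrivial real Lie superalgebras of dimension at most $4$ have trivial Bogomolov multiplier.
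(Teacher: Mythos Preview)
Your proposal is correct and follows exactly the paper's approach: the corollary has no separate proof in the paper but is an immediate consequence of Theorems~\ref{t6.5}, \ref{t6.6} and \ref{t6.7}, whose proofs carry out precisely the case-by-case recipe (a)--(d) you describe, working out the same representatives $L^{3}_{(3,1)}$, $L^{1}_{(2,2)}$, $L^{11}_{(2,2)}$, $L^{1}_{(1,3)}$ and declaring the rest ``similar''. Your identification of the potentially delicate cases $L^{5}_{(2,2)}$--$L^{8}_{(2,2)}$ and the sign-tracking in the graded identities is a fair summary of where the bookkeeping burden lies, though the paper does not single these out.
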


\end{document}